\documentclass{amsart}

\usepackage{ae,aecompl}
\usepackage{esint}
\usepackage{graphicx}
\usepackage[all,cmtip]{xy}
\usepackage{amsmath, amscd}
\usepackage{booktabs}
\usepackage{amsthm}
\usepackage{amssymb}
\usepackage{amsfonts}
\usepackage{qsymbols}
\usepackage{latexsym}
\usepackage{mathrsfs}
\usepackage{cite}
\usepackage{color}
\usepackage{url}
\usepackage{enumerate}
\usepackage{inputenc,todonotes,esint}
\usepackage{verbatim}
\usepackage[draft=false, colorlinks=true]{hyperref}
\usepackage[new]{old-arrows}

\allowdisplaybreaks

\setlength{\unitlength}{1cm}
\setcounter{secnumdepth}{2}

\newcommand\ang[1]{\langle #1 \rangle}

\newcommand {\bmo}{\mathrm{bmo}}
\newcommand {\BMO}{\mathrm{BMO}}

\newcommand {\C}{{\mathbb C}}

\newcommand {\ud}{\mathrm{d}}

\newcommand {\veps}{\varepsilon}

\newcommand {\F}{\mathcal{F}}

\newcommand {\HT}{\mathcal{H}}
\newcommand {\Hp}{\mathcal{H}^{p}_{FIO}(\Rn)}

\newcommand {\Hps}{\mathcal{H}^{s,p}_{FIO}(\Rn)}

\newcommand {\ind}{\mathbf{1}}

\newcommand {\J}{\mathcal{J}}
\newcommand {\ka}{\kappa}

\newcommand {\la}{\lambda}
\newcommand {\rb}{\rangle}
\newcommand {\lb}{\langle}
\newcommand {\La}{\mathcal{L}}
\newcommand {\loc}{\mathrm{loc}}
\newcommand {\Lqp}{L^{q}_{\w}L^{p}_{x}L^{2}_{\sigma}(\Spp)}

\newcommand {\N}{{\mathbb N}}

\newcommand {\ph}{\varphi}

\newcommand {\R}{\mathbb R}
\newcommand {\Rtwo}{\mathbb{R}^{2}}
\newcommand {\RR}{\mathbb R}

\newcommand {\Rn}{\mathbb{R}^{n}}
\newcommand {\Rnone}{\mathbb{R}^{n+1}}

\newcommand {\supp}{\mathrm{supp}}

\newcommand {\Sp}{S^{*}\Rn}
\newcommand {\Spp}{S^{*}_{+}\Rn}

\newcommand {\Sw}{\mathcal{S}}

\newcommand {\w}{\omega}

\newcommand {\wh}{\widehat}
\newcommand {\wt}{\widetilde}

\newcommand {\Z}{\mathbb Z}

\newcommand {\vanish}[1]{\relax}

\newcommand{\fun}{\mathcal{L}_{W,s}}
\newcommand{\funpqs}{\fun^{q,p}(\Rn)}

\newcommand{\funpqsone}{\fun^{q,p}(\R^{n+1})}
\newcommand{\funpqzero}{\mathcal{L}_{W,0}^{q,p}(\R^{n})}

	\newcommand{\x}{\times}

\DeclareFontFamily{U}{mathx}{\hyphenchar\font45}
\DeclareFontShape{U}{mathx}{m}{n}{
      <5> <6> <7> <8> <9> <10>
      <10.95> <12> <14.4> <17.28> <20.74> <24.88>
      mathx10
      }{}
\DeclareSymbolFont{mathx}{U}{mathx}{m}{n}
\DeclareFontSubstitution{U}{mathx}{m}{n}
\DeclareMathAccent{\widecheck}{0}{mathx}{"71}

\newcommand\ldg{\mathrm{ldg}}

\DeclareMathOperator{\Real}{Re}
\DeclareMathOperator{\Imag}{Im}

\newtheorem{theorem}{Theorem}[section]
\newtheorem{lemma}[theorem]{Lemma}
\newtheorem{proposition}[theorem]{Proposition}
\newtheorem{corollary}[theorem]{Corollary}

\theoremstyle{definition}
\newtheorem{definition}[theorem]{Definition}
\newtheorem{remark}[theorem]{Remark}

\numberwithin{equation}{section}
\protected\def\ignorethis#1\endignorethis{}
\let\endignorethis\relax

\title{Function spaces for decoupling}

\author{Andrew Hassell}
\address{Mathematical Sciences Institute\\
and France-Australia Mathematical Sciences and Interactions\\
ANU-CNRS International Research Laboratory\\
Australian National University \\
Ngunnawal and Ngambri Country \\
Canberra ACT 0200, Australia
}
\email{Andrew.Hassell@anu.edu.au}

\author{Pierre Portal}
\address{Mathematical Sciences Institute\\
and France-Australia Mathematical Sciences and Interactions\\
ANU-CNRS International Research Laboratory\\
Australian National University \\
Ngunnawal and Ngambri Country \\
Canberra ACT 0200, Australia}
\email{Pierre.Portal@anu.edu.au}

\author{Jan Rozendaal}
\address{Institute of Mathematics, Polish Academy of Sciences\\
Ul.~\'{S}niadeckich 8\\
00-656 Warsaw\\
Poland}
\email{jrozendaal@impan.pl}

\author{Po--Lam Yung}
\address{Mathematical Sciences Institute\\
and France-Australia Mathematical Sciences and Interactions\\
ANU-CNRS International Research Laboratory\\
Australian National University \\
Ngunnawal and Ngambri Country \\
Canberra ACT 0200, Australia}
\email{PoLam.Yung@anu.edu.au, plyung@math.cuhk.edu.hk}

\keywords{Decoupling inequality, Fourier integral operator, wave equation, local smoothing}

\subjclass[2020]{Primary 42B35. Secondary 42B37, 35L05, 35S30}

\thanks{This research was funded in part by the National Science Center, Poland, grant 2021/43/D/ST1/00667. The third author is partially supported by NCN grant UMO-2023/49/B/ST1/01961. Yung was partially supported by Future Fellowship FT200100399 from the Australian Research Council.}

\begin{document}

\begin{abstract}
We introduce new function spaces $\funpqs$ that yield a natural reformulation of the $\ell^{q}L^{p}$ decoupling inequalities for the sphere and the light cone. These spaces are invariant under the Euclidean half-wave propagators, but not under all Fourier integral operators unless $p=q$, in which case they coincide with the Hardy spaces for Fourier integral operators. We use these spaces to obtain improvements of the classical fractional integration theorem and local smoothing estimates.
\end{abstract}
	
\maketitle

\section{Introduction}\label{sec:intro}

This article aims to provide a bridge between the theory of decoupling inequalities in Fourier analysis on the one side, and that of invariant spaces for wave propagators and more general Fourier integral operators on the other. 

\subsection{Setting}\label{subsec:setting}

In recent years, the theory of decoupling inequalities has developed into a highly active part of harmonic analysis. Starting with seminal work of Bourgain and Demeter \cite{Bourgain-Demeter15}, which in turn has its origins in ideas of Wolff \cite{Wolff00}, decoupling inequalities have found applications both in number theory and to partial differential equations, leading for example to a proof of the main conjecture in Vinogradov's mean value theorem \cite{BoDeGu16,GuLiKuYuZo21}, the resolution of Carleson's problem on almost everywhere convergence for the Schr\"{o}dinger equation \cite{DuGuLi17,Du-Zhang19,Hickman23}, and significant progress towards the local smoothing conjecture for the Euclidean wave equation \cite{Bourgain-Demeter15,GuWaZh20,BeHiSo21}. 

In a separate development, a scale of invariant spaces $(\Hp)_{1\leq p\leq \infty}$ for wave propagators and more general Fourier integral operators (FIOs) was introduced in \cite{HaPoRo20}, extending a pioneering construction due to Smith \cite{Smith98a} for $p=1$. These spaces satisfy the following Sobolev embeddings into the $L^{p}$ scale, for $1<p<\infty$:
\begin{equation}\label{eq:Sobolevintro}
W^{s(p),p}(\Rn)\subseteq \Hp\subseteq W^{-s(p),p}(\Rn),
\end{equation}
with the appropriate modifications involving the local Hardy space $\HT^{1}(\Rn)$ and $\bmo(\Rn)$ for $p=1$ and $p=\infty$, respectively. Here and throughout, we write
\begin{equation}\label{eq:sp}
s(p):=\frac{n-1}{2}\Big|\frac{1}{2}-\frac{1}{p}\Big|
\end{equation}
for $1\leq p\leq \infty$. These embeddings allow one to recover the optimal $L^{p}(\Rn)$ regularity of Fourier integral operators due to Seeger, Sogge and Stein \cite{SeSoSt91}, and the accompanying sharp fixed-time $L^{p}(\Rn)$ regularity for wave equations. %On the other hand, the fact that these Hardy spaces for FIOs are invariant under FIOs and related operators allows for iterative constructions that are not possible on $L^{p}$ for $p\neq 2$, and 
Moreover, the Hardy spaces for Fourier integral operators have been applied to wave equations with rough coefficients \cite{Frey-Portal20,Hassell-Rozendaal23} and to nonlinear wave equations \cite{Rozendaal-Schippa23,LiRoSoYa24} in ways that do not seem possible when working on $L^{p}(\Rn)$ directly.

Recently, it was realized that invariant spaces for wave propagators are also connected to decoupling inequalities. First, in \cite{Rozendaal22b} it was shown that, loosely speaking, $\Hp$ is the largest space of initial data for which one can obtain local smoothing estimates for the wave equation when relying on the $\ell^{p}$ decoupling inequality for the light cone. This observation yields estimates that improve upon those in the local smoothing conjecture for $p\geq \frac{2(n+1)}{n-1}$. In \cite{Rozendaal-Schippa23,LiRoSoYa24}, this connection was extended to other decoupling inequalities, thereby providing improved local smoothing estimates for both constant-coefficient and variable-coefficient wave equations.

However, this development only concerns applications of decoupling inequalities to wave equations, and it shows that the Hardy spaces for Fourier integral operators are natural spaces of initial data in that regard. In the present article, we %aim to 
develop the connection between decoupling inequalities and invariant spaces for wave propagators further, by introducing function spaces that capture the decoupling inequalities for the sphere and the light cone in full generality. In particular, these are not just suitable spaces of initial data for the Euclidean wave equation, but also for its solutions. 
%We subsequently use these function spaces to obtain improved fractional integration theorems, local smoothing estimates, and well-posedness results for nonlinear wave equations.
Moreover, by working with general $\ell^{q}$ decoupling inequalities instead of merely considering the case where $q=p$, we obtain improved fractional integration theorems, as well as new local smoothing estimates and well-posedness results for nonlinear wave equations.

\subsection{Main results}\label{subsec:mainresults}

We introduce a collection of function spaces $\funpqs$, for $p,q\in[1,\infty)$ and $s\in\R$. Here $p$ is a spatial integrability parameter, $s$ a smoothness parameter, $q$ measures integrability with respect to angular localizations, and $W$ is a wave packet transform that implicitly connects the spatial and angular parameters.  More precisely, cf.~Definition \ref{def:spaces}, for $1<p<\infty$ the space $\funpqs$ consists of all $f\in\Sw'(\Rn)$ such that
\begin{equation}\label{eq:normintro}
\|f\|_{\funpqs}=\|\rho(D)f\|_{L^{p}(\Rn)}+\Big(\int_{S^{n-1}}\|\ph_{\w}(D)f\|_{W^{s,p}(\Rn)}^{q}\ud \w\Big)^{1/q}<\infty.
\end{equation}
Here $\rho(D)$ is a relatively unimportant low-frequency cut-off, and the $\ph_{\w}(D)$, for $\w$ in the unit sphere $S^{n-1}$, are Fourier multipliers that localize to a paraboloid in the direction of $\w$ (see Section \ref{subsec:packets}). % for their exact properties). 
For $p=1$ one replaces the Sobolev space $W^{s,p}(\Rn)$ in \eqref{eq:normintro} by the Hardy--Sobolev space $\HT^{s,1}(\Rn) =  (1-\Delta)^{-s/2}\HT^{1}(\Rn)$.

\subsubsection{Boundedness of Fourier integral operators}

By combining parabolic frequency localizations with the implicit Littlewood--Paley decomposition of $L^{p}(\Rn)$, \eqref{eq:normintro} involves a dyadic-parabolic decomposition which goes back to  \cite{Fefferman73b} and which plays a key role in the proof of the optimal $L^{p}$ regularity of FIOs in \cite{SeSoSt91}. On a more intrinsic level, angular localization is one of the fundamental tenets of microlocal analysis, and it allows one to deal with the phenomenon of propagation of singularities that is inherent to hyperbolic equations and FIOs. Accordingly, $\funpqzero$ coincides with $\Hp$ when $p=q$, and as such it is invariant under FIOs of order zero. Our first main result shows that this invariance extends to general $p,q\in[1,\infty)$ for some, but not all, FIOs.

\begin{theorem}\label{thm:invarianceintro}
Let $p,q\in[1,\infty)$ and $s\in\R$. Then $e^{it\sqrt{-\Delta}}:\funpqs\to\funpqs$ is bounded for all $t\in\R$.

On the other hand, there exists a compactly supported Fourier integral operator $T$ such that $T:\funpqzero\to\funpqzero$ is not bounded if $p\neq q$.
\end{theorem}

The first part of Theorem \ref{thm:invarianceintro} is contained in Theorem \ref{thm:FIObdd}, and the second part is proved in Section \ref{subsec:unboundedop}. 

One can choose the operator $T$ in Theorem \ref{thm:invarianceintro} 
to be a simple change of coordinates, independent of $p$ and $q$, although the same holds if $T=e^{it\sqrt{L}}$ for $t\neq 0$ and $L$ a suitable variable-coefficient second order differential operator, cf.~Remark \ref{rem:wavenotbdd}. In fact, the unboundedness of FIOs on $\funpqs$ for $p\neq q$ appears to be rather generic in the case of variable-coefficient propagation of singularities.

\subsubsection{Sobolev embeddings}

Although the spaces $\funpqs$ do not have the same invariance properties as $\Hp$ if $p\neq q$, several key results persist, such as an extension of \eqref{eq:Sobolevintro}.  

\begin{theorem}\label{thm:Sobolevintro}
Let $p,q,r\in(1,\infty)$ and $s\in\R$. Then the following statements hold.
\begin{enumerate}
\item\label{it:Sobolevintro1} If $p\leq q\leq 2$ or $2\leq q\leq p$, then
\[
W^{s(p)+s,p}(\Rn)\subseteq \mathcal{L}_{W,s} ^{q,p}(\Rn)\subseteq W^{-s(p)+s,p}(\Rn);
\]
\item\label{it:Sobolevintro2} If $p\leq r$, then
$
 \mathcal{L}_{W,s+\frac{n+1}{2}(\frac{1}{p}-\frac{1}{r})} ^{q,p}(\Rn)\subseteq \mathcal{L}_{W,s} ^{q,r}(\Rn).
$
\end{enumerate}
\end{theorem}

Theorem \ref{thm:Sobolev2} contains \eqref{it:Sobolevintro1}, while \eqref{it:Sobolevintro2} can be found in Theorem \ref{thm:Sobolev1}. The statements also hold for $p=1$, upon replacing $W^{s,p}(\Rn)$ by the Sobolev space $\HT^{s,1}(\Rn)$ over the local Hardy space $\HT^{1}(\Rn)$. 

Due to Theorem \ref{thm:invarianceintro}, $\Hp$ seems to be a more natural space than $\funpqs$ for the analysis of variable-coefficient wave equations, at least if $p\neq q$. On the other hand, Theorem \ref{thm:Sobolevintro} indicates that the case $q=2$ is of particular interest for Sobolev embeddings. Indeed, by combining \eqref{it:Sobolevintro1} and \eqref{it:Sobolevintro2}, one obtains the following continuous inclusions for $p\leq 2\leq r$:
\[
W^{n(\frac{1}{p}-\frac{1}{r}),p}(\Rn)\subseteq \mathcal{L}_{W,n(\frac{1}{p}-\frac{1}{r})-s(p)} ^{2,p}(\Rn) \subseteq \mathcal{L}_{W,\frac{n-1}{2}(\frac{1}{p}-\frac{1}{r})-s(p)} ^{2,r}(\Rn)\subseteq L^{r}(\Rn).
\]
Given that the exponents in \eqref{it:Sobolevintro1} are sharp, these embeddings constitute a strict improvement of the classical fractional integration theorem $W^{n(\frac{1}{p}-\frac{1}{r}),p}(\Rn)\subseteq L^{r}(\Rn)$ if $(p,r)\neq (2,2)$, with an improvement of up to $2s(p)$ derivatives for suitable functions in 
$\mathcal{L}_{W,n(\frac{1}{p}-\frac{1}{r})-s(p)} ^{2,p}(\Rn)$ (see also Remark \ref{rem:fracintimproved}).

For $p,r\leq 2$ or $p,q\geq 2$, Theorem \ref{thm:Sobolevintro} complements the classical fractional integration theorem, in the sense that neither statement implies the other. Nonetheless, in this case we obtain strict improvements of classical mapping properties for FIOs of negative order, cf.~Corollary \ref{cor:fracFIO} and Remark \ref{rem:fracFIO}. 

%We also note that $\funpqs$ behaves in the natural way under complex interpolation and duality, as is shown in Section \ref{subsec:interdual}.

\subsubsection{Decoupling}

We have discussed some of the basic properties of the spaces $\funpqs$, and how they relate to those of $\Hp$. On the other hand, it is not immediately clear how these spaces relate to the theory of Fourier decoupling. For example, decoupling inequalities are typically formulated using a discrete decomposition of functions that have Fourier support in a fixed compact set, whereas \eqref{eq:normintro} is only of interest at high frequencies. However, after rescaling and observing that the continuous decomposition in \eqref{eq:normintro} is equivalent to a discrete one on dyadic frequency annuli, one can reinterpret the $\ell^{q}$ decoupling inequalities for the sphere and the cone as an improvement over the Sobolev embeddings from Theorem \ref{thm:Sobolevintro} \eqref{it:Sobolevintro1}, for functions with highly localized frequency support. 

More precisely, for each $R\geq 2$, let $V_{R}\subseteq S^{n-1}$ be a maximal collection of unit vectors satisfying $|\nu-\nu'|\geq R^{-1/2}$ for all $\nu,\nu'\in V_{R}$, and let $(\chi_{\nu})_{\nu\in V_{R}}$ be an associated partition of unity of functions homogeneous of degree $0$ (see Section \ref{subsec:discrete} for details). Set
\begin{equation}\label{eq:alphapintro}
\alpha(p) := \begin{cases}
s(p)-\frac{1}{p} &\quad \text{for }\frac{2(n+1)}{n-1}\leq p <\infty,\\
0 &\quad \text{for }2<p \leq \frac{2(n+1)}{n-1}.
\end{cases}
\end{equation}
The following theorem connects the spaces $\funpqs$ to decoupling theory. 

\begin{theorem}\label{thm:decoupleintro}
Let $p,q\in[1,\infty)$, $s\in\R$ and $\veps>0$. Then there exists a $C>0$ such that the following statements hold for all $f\in \funpqs$ and $R\geq 2$.
\begin{enumerate}
\item\label{it:decoupleintro1} If $\supp(\wh{f}\,)\subseteq\{\xi\in\Rn\mid R/2\leq |\xi|\leq 2R\}$, then
\[
\frac{1}{C}\|f\|_{\funpqs} \leq  \Big(\sum_{\nu\in V_{R}}\|\chi_{\nu}(D)f\|_{\funpqs}^{q}\Big)^{1/q}\leq C\|f\|_{\funpqs}.
\]
\item\label{it:decoupleintro2} If $\supp(\wh{f}\,)\subseteq \{\xi\in\Rn\mid R-1\leq |\xi|\leq R+1\}$ and $q\geq 2$, then $f\in W^{s-\alpha(p)-\veps,p}(\Rn)$ and
\begin{equation}\label{eq:decoupleintro}
\|f\|_{W^{s-\alpha(p)-\veps,p}(\Rn)}\leq C\|f\|_{\funpqs}.
\end{equation}
\end{enumerate}
\end{theorem}

Part \eqref{it:decoupleintro1} is contained in Proposition \ref{prop:discrete}, and part \eqref{it:decoupleintro2} in Corollary \ref{cor:decouplesphere}.

The analogue of \eqref{it:decoupleintro1} for $L^{p}(\Rn)$ does not hold, and in fact it is the failure of such a two-sided inequality which motivates decoupling theory. By contrast, Theorem \ref{thm:decoupleintro} shows that the $\funpqs$ norm is invariant under decoupling into the dyadic-parabolic pieces that arise when intersecting a dyadic annulus with the cones containing the support of the $\chi_{\nu}$.

Part \eqref{it:decoupleintro2} of Theorem \ref{thm:decoupleintro} is equivalent to the $\ell^{q}L^{p}$ decoupling inequality for the sphere (see Remark \ref{rem:decouplingcompare}). Given that $\alpha(p)<s(p)$ for all $2<p<\infty$, \eqref{eq:decoupleintro} constitutes an improvement over the second embedding in Theorem \ref{thm:Sobolevintro} \eqref{it:Sobolevintro1}, which is sharp for general $f\in \funpqs$ with frequency support in a dyadic annulus, assuming a more restrictive condition on the frequency support of $f$.

Corollary \ref{cor:decouplecone} contains a similar reformulation of the $\ell^{q}L^{p}$ decoupling inequality for the light cone. In this case, one obtains improvements of the Sobolev embeddings for $\funpqsone$ for functions that have frequency support contained in the intersection of a dyadic annulus and a unit neighborhood of the light cone.

\subsubsection{Regularity for wave equations}

%The fact that one can reformulate the $\ell^{q}$ decoupling inequality for the light cone using the spaces $\funpqsone$ might indicate that $\fun^{p,q;s}(\R^{n+1})$ is a natural space for the solutions to the Euclidean wave equation. Such a connection was not made in \cite{Rozendaal22b,Rozendaal-Schippa23,LiRoSoYa24}, where the relevant function spaces were only used for the initial data. 

As was done in \cite{Rozendaal22b,Rozendaal-Schippa23} for related function spaces, we obtain local smoothing estimates for the Euclidean wave equation using $\funpqs$ as our space of initial data. In this manner, when combined with the fractional integration result for $\fun^{2,p}(\Rn)$ in Theorem \ref{thm:Sobolevintro}, one simultaneously obtains improved local smoothing estimates, as well as improved Strichartz estimates for suitable initial data (see Theorem \ref{thm:localsmoothmain} and Remark \ref{rem:Strichartz}). Moreover, as in \cite{Rozendaal-Schippa23,LiRoSoYa24}, we apply these local smoothing estimates to nonlinear wave equations with initial data outside of $L^{2}$-based Sobolev spaces, in Section \ref{subsec:nonlinear}.

\subsection{Techniques}\label{subsec:techniques}

Apart from the specific results and the connection to decoupling theory, a significant difference between the present article and earlier contributions in this direction concerns the techniques that we use. 

Namely, in \cite{Smith98a,HaPoRo20} the Hardy spaces for FIOs were defined using conical square functions over the cosphere bundle $\Sp=\Rn\times S^{n-1}$, which allows one to incorporate the theory of tent spaces to prove various fundamental properties of the spaces. Such a conical square function characterization over the cosphere bundle is implicitly contained in \eqref{eq:normintro} when $p=q$, due to Fubini's theorem, but when $p\neq q$ this argument breaks down. More generally, the fact that FIOs are typically not bounded on $\funpqs$  indicates that one cannot expect to apply the same techniques to $\funpqs$ when $p\neq q$. 

On the other hand, \cite{Rozendaal-Schippa23} introduced Besov-type spaces adapted to the half-wave group using a similar norm as in \eqref{eq:normintro}, for general $p$ and $q$, albeit with $W^{s,p}(\Rn)$ replaced by a Besov space. The latter does not make a difference on dyadic frequency annuli, so for our applications to decoupling theory one could also use the spaces from \cite{Rozendaal-Schippa23}. On the other hand, working with Besov spaces does not allow one to recover the optimal fixed-time $L^{p}$ regularity for wave equations, and it does not yield improvements of the classical fractional integration theorem. Moreover, on a technical level, for Besov spaces it typically suffices to obtain estimates on dyadic frequency annuli, as opposed to having to deal with all frequency scales simultaneously through a square function, as is necessary in our setting. %In fact, \cite{Rozendaal-Schippa23} does not provide a full function space theory, focusing instead on applications to nonlinear wave equations, but (simpler versions of) the techniques in this article can also be used to derive various properties of the adapted Besov spaces from \cite{Rozendaal-Schippa23}.

So, instead of relying on techniques from earlier work in this direction, we make connections to other areas to prove our main results. For example, we incorporate the theory of parabolic Hardy spaces from \cite{Calderon-Torchinsky75,Calderon-Torchinsky77}, by observing that it is convenient to replace $W^{s,p}(\Rn)$ in \eqref{eq:normintro} by $(1-\Delta)^{-s/2}H^{p}_{\w}(\Rn)$, where $H^{p}_{\w}(\Rn)$ is a parabolic Hardy space associated to a family of dilations that is anisotropic in the direction of $\w\in S^{n-1}$. For $1<p<\infty$ one has $L^{p}(\Rn)=H^{p}_{\w}(\Rn)$, but in Proposition \ref{prop:equivpar} we show that 
\[
\|\ph_{\w}(D)f\|_{\HT^{1}(\Rn)}\eqsim \|\ph_{\w}(D)f\|_{H^{1}_{\w}(\Rn)}
\]
as well, due to the fact that $\ph_{\w}(D)$ localizes to a paraboloid in the direction of $\w$. One can then use anisotropic Calder\'{o}n--Zygmund theory to prove embeddings and invariance properties of $\funpqs$.

As in \cite{HaPoRo20}, we use a wave packet transform $W$ to lift functions on $\Rn$ to phase space $T^{*}\Rn$ minus the zero section, parametrized in spherical coordinates as $\Rn\times S^{n-1}\times(0,\infty)$. This allows us to embed $\funpqs$ into a larger, but simpler and more established, space of functions on phase space. We can then derive various properties of $\funpqs$ from those of the encompassing space. However, unlike in \cite{HaPoRo20}, we do not use a tent space norm on $T^{*}\Rn$, working instead with an $L^{q}(S^{n-1};L^{p}(\Rn;L^{2}(0,\infty)))$ norm that arises naturally from \eqref{eq:normintro}, through the Littlewood--Paley decomposition of $L^{p}(\Rn)$. This in turn means that we cannot rely on tools such as the atomic decomposition of tent spaces. Instead, to prove the fundamental Theorem \ref{thm:FIOconj}, we use both the boundedness of the vector-valued Hardy--Littlewood maximal function over $S^{n-1}$, as well as the boundedness of anisotropic maximal functions in each separate direction $\w\in S^{n-1}$. Theorem \ref{thm:FIOconj} then allows us to deduce properties of $\funpqs$ from those of $L^{q}(S^{n-1};L^{p}(\Rn;L^{2}(0,\infty)))$. In its current incarnation, this setup does restrict us to considering $p,q\in(1,\infty)$ in several of our results. Due to the fact that decoupling theory is typically only of interest for $2\leq p,q<\infty$, we choose not to focus on the endpoint cases here.

\subsection{Organization}\label{subsec:organization}

%This article is organized as follows. 
In Section \ref{sec:prelim} we collect background on anisotropic dilations on $\Rn$% and the corresponding norms
, on parabolic Hardy spaces, and on Fourier integral operators. In Section \ref{sec:transforms} we then introduce the parabolic frequency localizations and the associated wave packet transforms. In Section \ref{sec:spaces} we define the spaces $\funpqs$, and we derive many of their basic properties. The fact that they are invariant under the Euclidean wave propagators, but not under general FIOs, is shown in Section \ref{sec:invariance}. Section \ref{sec:embeddings} then contains the Sobolev embeddings and fractional integration theorems for $\funpqs$, while Section \ref{sec:decouple} connects these spaces to decoupling inequalities. Finally, in Section \ref{sec:wave} we obtain local smoothing estimates using $\funpqs$, as well as well-posedness results for nonlinear wave equations.

\subsection{Notation and terminology}\label{subsec:notation}

The natural numbers are $\N=\{1,2,\ldots\}$, and $\Z_{+}:=\N\cup\{0\}$. We write $\R_{+}$ for $(0,\infty)$, endowed with the Haar measure $\ud\sigma/\sigma$. Moreover, $S^{n-1}$ is the unit sphere in $\Rn$, with the unit normalized surface measure $\ud\w$. Throughout this article we fix $n\in\N$ with $n\geq2$.

For $\xi\in\Rn$ we write $\lb\xi\rb=(1+|\xi|^{2})^{1/2}$, and $\hat{\xi}=\xi/|\xi|$ if $\xi\neq0$. We use multi-index notation, where $\partial_{\xi}=\nabla_{\xi}=(\partial_{\xi_{1}},\ldots,\partial_{\xi_{n}})$ and $\partial^{\alpha}_{\xi}=\partial^{\alpha_{1}}_{\xi_{1}}\ldots\partial^{\alpha_{n}}_{\xi_{n}}$
for $\xi=(\xi_{1},\ldots,\xi_{n})\in\Rn$ and $\alpha=(\alpha_{1},\ldots,\alpha_{n})\in\Z_{+}^{n}$. Moreover, $\partial_{x\eta}^{2}\Phi$ is the mixed Hessian of a function $\Phi$ of the variables $x$ and $\eta$. %We also write $\partial\chi$ for the Jacobian of a smooth map $\chi$ between open subsets of $\Rn$, and $\partial_{x\eta}^{2}\Phi$ is the mixed Hessian of a function $\Phi$ of the variables $x$ and $\eta$.

%For $V\subseteq\Rn$ open we write $\Da(V)=C^{\infty}_{c}(V)$, and $\Da'(V)$ is the dual space of distributions on $V$. We write $\lb f,g\rb_{\Rn}$ for the standard distributional pairing between $f\in\Da(V)$ and $g\in\Da'(V)$, or simply $\lb f,g\rb$ when it is clear that we are working on $\Rn$. 
The Fourier transform of a tempered distribution $f\in\Sw'(\Rn)$ is denoted by $\F f$ or $\widehat{f}$, and its inverse Fourier tranform by $\F^{-1}f$% or $\widecheck{f}$
. If $f\in L^{1}(\Rn)$ and $\xi\in\Rn$, then $\F f(\xi)=\int_{\Rn}e^{-i x\cdot \xi}f(x)\ud x$. The %standard 
distributional pairing between $f\in\Sw'(\Rn)$ and $g\in\Sw(\Rn)$ is denoted by $\lb f,g\rb_{\Rn}$, and we write $\ph(D)$ for the Fourier multiplier with symbol $\ph$. The volume of a measurable subset $B\subseteq \Omega$ of a measure space $\Omega$ will be denoted by $|B|$, and its indicator function by $\ind_{B}$.  
The space of bounded linear operators between Banach spaces $X$ and $Y$ is $\La(X,Y)$, and $\La(X):=\La(X,X)$. 
The notation $W^{s,p}(\Rn)$ is used for the Sobolev space \[
W^{s,p}(\Rn):=\lb D\rb^{-s}L^{p}(\Rn).
\]
We write $f(s)\lesssim g(s)$ to indicate that $f(s)\leq Cg(s)$ for all $s$ and a constant $C>0$ independent of $s$, and similarly for $f(s)\gtrsim g(s)$ and $g(s)\eqsim f(s)$.

A list of notation specific to this article follows. 

\begin{center}
\begin{tabular}{|p{2cm}|p{6cm}|p{3.6cm}|}
\hline
\textbf{Notation} & \textbf{Meaning} & \textbf{Link to definition} \\
\hline
$s(p)$ & Adjustment in differentiability index & Equation \eqref{eq:sp} \\
$\|\cdot\|_{\funpqs}$ & Function space norm with indices $(p, q, s)$ associated with the wave packet transform $W$& Equation \eqref{eq:normintro}, Definition \ref{def:spaces} \\
$A_{\w,\sigma}$ & Anisotropic scaling & Equation \eqref{eq:dilations} \\
$|\cdot|_\omega$ & Anisotropic norm & Section \ref{subsec:norms} \\
$\Rn_{\w}$ & $\Rn$ endowed with the norm $|\cdot|_{\w}$ &Section \ref{subsec:norms}\\
$B_{\tau}^{\w}(x)$ & Anisotropic ball & Equation \eqref{eq:anisball} \\
$M_\omega$ & Anisotropic maximal function & Equation \eqref{eq:anismaximal} \\
$\Psi_0$, $\Psi$ & Littlewood--Paley cutoff & Equations \eqref{eq:Psi0}, \eqref{eq:PsiRn} \\
$\Psi_\w$ & Anisotropic Littlewood--Paley cutoff & Equation \eqref{eq:Psiw} \\
$\HT^p(\Rn)$ & Local Hardy space & Definition \ref{def:localHardy} \\
$\HT^{s,p}(\Rn)$ & Sobolev space over local Hardy space & Definition \ref{def:localHardy} \\
$H^{p}_{\w}(\Rn)$ & Anisotropic Hardy space & Definition \ref{def:parHardy} \\
$\ph_{\w}$ & Parabolic frequency localization & Section \ref{subsec:packets} \\
$\psi_{\w,\sigma}$ & Wave packet & Equation \eqref{eq:psiwsigma} \\
$W$ & Wave packet transform & Equation \eqref{eq:W} \\
$V$ & Adjoint of wave packet transform & Equation \eqref{eq:V} \\
$\chi_\nu$ & Angular localizer & Section \ref{subsec:discrete} \\
$d(p,q)$ & Decoupling exponent & Equation \eqref{eq:dpq} \\
$\alpha(p)$ & Adjustment in differentiability index & Equations \eqref{eq:alphapintro}, \eqref{eq:alphap} \\
\hline
\end{tabular}
\end{center}

\section{Preliminaries}\label{sec:prelim}

In this section we first introduce a family of norms, associated with groups of dilations. We then define parabolic Hardy spaces associated with these dilations, and we collect some background on a specific class of Fourier integral operators.

\subsection{A family of norms}\label{subsec:norms}

In this subsection, we define a collection of norms on $\Rn$, and the associated metrics. We then derive some of their basic properties.

For $\w\in S^{n-1}$, $\sigma>0$ and $x\in\Rn$, set 
\begin{equation}\label{eq:dilations}
A_{\w,\sigma}(x):=\sigma^{2}(\w\cdot x)\w+\sigma\Pi_{\w}^{\perp}x,
\end{equation}
where $\Pi_{\w}^{\perp}:\Rn\to\Rn$ is the orthogonal projection onto the complement of the span of $\w$. Note that, with $P_{\w}(x):=x+(\w\cdot x)\w$, one has
\[
A_{\w,\sigma}=\sigma^{P_{\w}}=\exp(\log(\sigma)P_{\w}).
\]
This implies that $(A_{\w,\sigma})_{\sigma>0}$ is a group of transformations as in \cite{Calderon-Torchinsky75}, and we will rely on the theory developed there in what follows.

Let $|x|_{\w}$ be the unique $\sigma_{x}>0$ such that $|A_{\w,1/\sigma_{x}}(x)|=1$. Then $|\cdot|_{\w}$ is a norm on $\Rn$, in the sense of \cite[XIII.5]{Stein93} (see \cite[Section 1.4]{Calderon-Torchinsky75}), and $|A_{\w,\sigma}x|_{\w}=\sigma|x|_{\w}$ for all $\sigma>0$. %, and we let
%\[
%d_{\w}(x,y):=|x-y|_{\w}\quad (x,y\in\Rn)
%\]
%be the associated metric. Moreover, we denote by 
Let $\Rn_{\w}$ %:=(\Rn,d_{\w},\ud x)$ 
be the metric measure space obtained by endowing $\Rn$ with the associated metric and the standard Lebesgue measure. For $\tau>0$, we write
\begin{equation}\label{eq:anisball}
B_{\tau}^{\w}(x):=\{y\in \Rn\mid |x-y|_{\w}<\tau\}
\end{equation}
for the ball in $\Rn_{\w}$ around $x\in\Rn$ with radius $\tau$, and
\[
B_{\tau}(x):=\{y\in \Rn\mid |x-y|<\tau\}
\]
for the corresponding Euclidean ball in $\Rn$. Note that $B_{\tau}^{\w}(x)$ is a convex set with respect to the Euclidean metric, given that $B_{\tau}^{\w}(0)$ is the inverse image of $B_{1}(0)$ under the linear map $A_{\w,1/\tau}$. For the same reason, $B^{\w}_{1}(x)=B_{1}(x)$. Moreover, using that $A_{\w,\tau}$ has determinant $\tau^{n+1}$, it follows that
\begin{equation}\label{eq:volumeball}
|B_{\tau}^{\w}(x)|=\tau^{n+1}|B_{1}(0)|.
\end{equation}
In particular, $\Rn_{\w}$ is a doubling metric measure space.

We will often work with the equivalent expression for $|\cdot|_{\w}$ provided by the following lemma.

\begin{lemma}\label{lem:equivnorm}
Let $\omega\in S^{n-1}$ and $x\in\Rn$. Then
\begin{equation}\label{eq:equivnorm}
|x|_{\w}\leq |\w\cdot x|^{1/2}+|\Pi_{\w}^{\perp}x|\leq 2|x|_{\w},
\end{equation}
and 
\begin{equation}\label{eq:equivnorm2}
|x|\leq |x|_{\w}\text{ if and only if }x\in \overline{B^{\w}_{1}(0)}=\overline{B_{1}(0)}.
\end{equation}
\end{lemma}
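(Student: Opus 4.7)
The plan is to set $\sigma := |x|_\w$, $a := |\w \cdot x|$, $b := |\Pi_\w^\perp x|$, and to reduce everything to a single defining relation for $\sigma$. Since $\w \perp \Pi_\w^\perp x$, formula \eqref{eq:dilations} gives $|A_{\w,1/\sigma}(x)|^2 = a^2/\sigma^4 + b^2/\sigma^2$, so the condition $|A_{\w,1/\sigma}(x)| = 1$ that defines $\sigma = |x|_\w$ is equivalent to
\[
\frac{a^2}{\sigma^4} + \frac{b^2}{\sigma^2} = 1.
\]
Every step below is a short algebraic consequence of this identity.

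For the right inequality in \eqref{eq:equivnorm}, I would observe that each summand above is at most $1$, giving $a \le \sigma^2$ and $b \le \sigma$; taking the square root of the first and adding yields $a^{1/2} + b \le 2\sigma$. For the left inequality, I would rewrite the defining relation as $\sigma^2 = b^2 + a^2/\sigma^2$ and then use $a \le \sigma^2$ (just proved) to bound $a^2/\sigma^2 \le a$, whence $\sigma^2 \le a + b^2 \le (a^{1/2} + b)^2$.

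For \eqref{eq:equivnorm2}, I would first note that $\sigma \mapsto |A_{\w,1/\sigma}(x)|^2$ is strictly decreasing on $(0, \infty)$ when $x \neq 0$ and equals $|x|^2$ at $\sigma = 1$; hence its unique root $|x|_\w$ satisfies $|x|_\w < 1 \Leftrightarrow |x| < 1$, which gives $B_1^\w(0) = B_1(0)$. The crux of the comparison of $|x|$ with $|x|_\w$ is the factorization
\[
|x|^2 - |x|_\w^2 = a^2 + b^2 - \sigma^2 = (\sigma^2 - 1)(\sigma^2 - b^2),
\]
obtained by substituting $a^2 = \sigma^2(\sigma^2 - b^2)$ from the defining relation. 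Because the second factor is nonnegative (since $b \le \sigma$), the sign of $|x|^2 - |x|_\w^2$ is controlled entirely by that of $\sigma^2 - 1$, and so $|x| \le |x|_\w$ holds exactly when $|x|_\w \le 1$, i.e., $x \in \overline{B_1^\w(0)}$. No step presents any serious obstacle; the one move worth singling out is the factorization above, which is what makes \eqref{eq:equivnorm2} transparent.
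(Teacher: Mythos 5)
Your proof is correct, and for the left inequality in \eqref{eq:equivnorm} it takes a genuinely different route from the paper's. The paper obtains $|x|_{\w}\leq |\w\cdot x|^{1/2}+|\Pi_{\w}^{\perp}x|$ from the triangle inequality for $|\cdot|_{\w}$, splitting $x=(\w\cdot x)\w+\Pi_{\w}^{\perp}x$ and computing $|(\w\cdot x)\w|_{\w}=|\w\cdot x|^{1/2}$ and $|\Pi_{\w}^{\perp}x|_{\w}=|\Pi_{\w}^{\perp}x|$; this leans on the nontrivial fact, imported from Calder\'on--Torchinsky, that $|\cdot|_{\w}$ is a norm. Your chain $\sigma^{2}=b^{2}+a^{2}/\sigma^{2}\leq b^{2}+a\leq (a^{1/2}+b)^{2}$ is purely algebraic and entirely self-contained, which is a small but real gain. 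The right inequality is handled identically in both proofs, and for \eqref{eq:equivnorm2} your factorization $|x|^{2}-|x|_{\w}^{2}=(\sigma^{2}-1)(\sigma^{2}-b^{2})$ is a more explicit and transparent version of the paper's one-line ``this implies \eqref{eq:equivnorm2}''. One caveat your factorization in fact exposes: when $\w\cdot x=0$ the second factor vanishes and $|x|=|x|_{\w}$ identically, so the ``only if'' direction of \eqref{eq:equivnorm2} (and your ``exactly when'') fails for such $x$ with $|x|>1$, and there is also a boundary discrepancy between the open ball in the statement and the closed condition $|x|_{\w}\leq 1$. This imprecision sits in the lemma as stated, not in your argument; the implication actually used later in the paper, namely that $x\in B_{1}^{\w}(0)$ forces $|x|\leq |x|_{\w}$, is exactly the direction you establish.
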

\begin{proof}
We may consider $x\neq0$. Set $\sigma:=|x|_{\w}$. Then, by definition and by \eqref{eq:dilations},
\[
1=|A_{\w,1/\sigma}(x)|=\sqrt{\sigma^{-4}|\w\cdot x|^{2}+\sigma^{-2}|\Pi_{\w}^{\perp}x|^{2}}
\]
and
\begin{equation}\label{eq:expressnorm}
|x|_{\w}=\sigma=\sqrt{\sigma^{-2}|\w\cdot x|^{2}+|\Pi_{\w}^{\perp}x|^{2}}.
\end{equation}
This implies \eqref{eq:equivnorm2}. Moreover, trivial estimates yield $|x|_{\w}\geq |x|_{\w}^{-1}|\w\cdot x|$ and $|x|_{\w}\geq |\Pi_{\w}^{\perp}x|$, which proves the second inequality in \eqref{eq:equivnorm}.

For the other inequality, it follows from \eqref{eq:expressnorm} that, for all $y\in\Rn$, one has $|y|_{\w}=|\Pi_{\w}^{\perp}y|$ if $\w\cdot y=0$. Moreover, $|y|_{\w}=|y|_{\w}^{-1}|\w\cdot y|$ and $|y|_{\w}=|\w\cdot y|^{1/2}$ if $\Pi_{\w}^{\perp}y=0$. Hence one may use that $|\cdot|_{\w}$ is a norm to write
\[
|x|_{\w}\leq |(\w\cdot x)\w|_{\w}+|\Pi_{\w}^{\perp}x|_{\w}=|\w\cdot x|^{1/2}+|\Pi_{\w}^{\perp}x|.\qedhere
\]
\end{proof}

%\begin{corollary}\label{cor:doubling}
%There exists a $C\geq0$ such that the following holds. For all $\w\in S^{n-1}$, $x\in \Rn$ and $\tau>0$, one has
%\[
%\frac{1}{C}\tau^{n+1}\leq |B_{\tau}^{\w}(x)|\leq C\tau^{n+1}. 
%\]
%In particular, $\Rn_{\w}$ is a doubling metric measure space.
%\end{corollary}

%From this lemma we immediately obtain the following result, which deals with the volume of a ball 
%\[
%B_{\tau}^{\w}(x):=\{y\in \Rn\mid |x-y|_{\w}<\tau\}
%\]
%around $x\in\Rn$ with radius $\tau>0$.

%\begin{corollary}\label{cor:doubling}
%There exists a $C\geq0$ such that the following holds. For all $\w\in S^{n-1}$, $x\in \Rn$ and $\tau>0$, one has
%\[
%\frac{1}{C}\tau^{n+1}\leq |B_{\tau}^{\w}(x)|\leq C\tau^{n+1}. 
%\]
%In particular, $\Rn_{\w}$ is a doubling metric measure space.
%\end{corollary}

It follows from Lemma \ref{lem:equivnorm} that the metric associated with $|\cdot|_{\w}$ is, at small distances, essentially the restriction to the subspace $\Rn\times \{\w\}\subseteq \Sp$ of the metric on the cosphere bundle from \cite{HaPoRo20,Smith98a}. Similarly, the square of the metric associated with $|\cdot|_{\w}$ essentially coincides with the quasi-distance from \cite{Frey-Portal20}. 

%The following corollary of Lemma \ref{lem:equivnorm} will be used frequently in this article.

\begin{corollary}\label{cor:equivnorm}
Let $\ka_{1},\ka_{2}\geq1$, $\w\in S^{n-1}$ and $\xi\in\Rn$ be such that $|\xi|\geq\ka_{1}^{-1}$ and $|\hat{\xi}-\w|\leq \ka_{2} |\xi|^{-1/2}$. Then $|\xi|_{\w}\geq \ka_{1}^{-1}$ and 
\begin{equation}\label{eq:equivnorm3}
(4+\ka_{1}^{2})^{-1/2}|\xi|^{1/2}\leq |\xi|_{\w}\leq (1+\ka_{2})|\xi|^{1/2}.
\end{equation}
\end{corollary}
\begin{proof}
Since $\ka_{1}\geq 1$, the first inequality follows from \eqref{eq:equivnorm2}. On the other hand, $|\w\cdot\xi|^{1/2}\leq |\xi|^{1/2}$ and, by assumption,
\[
|\Pi_{\w}^{\perp}\xi|\leq \sqrt{|\w\cdot \xi-|\xi||^{2}+|\Pi_{\w}^{\perp}\xi|^{2}}=|\xi-|\xi|\w|\leq \ka_{2}|\xi|^{1/2}.
\]
Now \eqref{eq:equivnorm} yields the right-most inequality in \eqref{eq:equivnorm3}. For the remaining inequality, we can combine what we have already shown with \eqref{eq:equivnorm} to write
\[
|\xi|\leq |\w\cdot\xi|+|\Pi_{\w}^{\perp}\xi| \leq 1+|\w\cdot\xi|+|\Pi_{\w}^{\perp}\xi|^{2}\leq \ka_{1}^{2}|\xi|_{\w}^{2}+4|\xi|_{\w}^{2}.\qedhere
\] 
\end{proof}

Since $\Rn_{\w}$ is a doubling metric measure space, it is natural to consider the (centered) Hardy--Littlewood maximal function $M_{\w}$ on $\R^{n}_{\w}$, given by 
\begin{equation}\label{eq:anismaximal}
M_{\w}f(x):=\sup_{\tau>0}\fint_{B_{\tau}^{\w}(x)}|f(y)|\ud y
\end{equation}
for $x\in\Rn$ and $f\in L^{1}_{\loc}(\Rn)$. %Moreover, later on we will work with the maximal function $M_{S_{n-1}}$ on the $n$-sphere $S^{n-1}$, given by 
%\begin{equation}\label{eq:spheremaximal}
%M_{S^{n-1}}g(\w):=\sup_{r>0}\fint_{B_{r}(\w)}|g(\nu)|\ud \nu
%\end{equation}
%for $\w\in S^{n-1}$ and $g\in L^{1}(S^{n-1})$. In \eqref{eq:spheremaximal} and throughout, $B_{r}(\omega)$ is the ball of radius $r$ around $\w$ with respect to the standard metric on $S^{n-1}$, and $\ud\nu$ is the unit normalized surface measure.
We record the following standard lemma concerning the boundedness of the vector-valued extension of $M_{\w}$ to $L^{p}(\Rn;L^{q}(\R_{+}))$, for $p,q\in(1,\infty)$. Recall that $\R_{+}$ denotes the measure space $(0,\infty)$, endowed with the Haar measure $\ud\sigma/\sigma$.% and simply write $L^{2}(0,\infty):=L^{2}((0,\infty),\ud\sigma/\sigma)$.

\begin{lemma}\label{lem:maximal}
Let $p,q\in(1,\infty)$. Then there exists a $C\geq 0$ such that %the following assertions hold.
%\begin{enumerate}
%\item\label{it:maximal1} One has
\[
\Big(\int_{\Rn}\Big(\int_{0}^{\infty}|M_{\w}F(\cdot,\sigma)(x)|^{q}\frac{\ud\sigma}{\sigma}\Big)^{p/q}\ud x\Big)^{1/p}\leq C\|F\|_{L^{p}(\Rn;L^{q}(\R_{+}))}.
\]
for all $\w\in S^{n-1}$ and $F\in L^{p}(\Rn;L^{q}(\R_{+}))$.
%\item\label{it:maximal2} One has
%\[
%\Big(\int_{\Rn}\Big(\int_{0}^{\infty}|M_{S^{n-1}}F(\cdot,\sigma)(x)|^{2}\frac{\ud\sigma}{\sigma}\Big)^{\frac{p}{2}}\ud x\Big)^{\frac{1}{p}}\leq C\Big(\int_{\Rn}\Big(\int_{0}^{\infty}|F(x,\sigma)|^{2}\frac{\ud\sigma}{\sigma}\Big)^{\frac{p}{2}}\ud x\Big)^{\frac{1}{p}}
%\]
%for all $G\in L^{p}(S^{n-1};L^{2}(0,\infty))$.
%\end{enumerate}
\end{lemma}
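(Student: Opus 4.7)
The plan is to recognize this as a Fefferman--Stein vector-valued maximal inequality on the space of homogeneous type $\R^n_\w$, with the only real content being the uniformity of the constant $C$ in $\w \in S^{n-1}$. Since the result is labelled as ``standard,'' I would not reinvent it but rather reduce it to known theorems on doubling metric measure spaces.

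First, I would note that for each fixed $\w$, the metric measure space $\R^n_\w$ is doubling, and \emph{its doubling constant is independent of $\w$}: by the volume formula \eqref{eq:volumeball} one has $|B_{2\tau}^\w(x)| = 2^{n+1}|B_\tau^\w(x)|$ regardless of $\w$. A standard Vitali covering argument on spaces of homogeneous type then yields weak-type $(1,1)$ and strong-type $(p,p)$ bounds for $M_\w$, $1<p<\infty$, with constants depending only on $p$ and on the doubling constant $2^{n+1}$, hence independent of $\w$.

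Next, to promote the scalar estimates to the vector-valued estimate stated in the lemma, I would invoke the Fefferman--Stein vector-valued maximal inequality in the doubling setting. For the continuous version with $\int_0^\infty(\,\cdot\,)\tfrac{\ud\sigma}{\sigma}$ in place of a discrete sum, one may either appeal directly to Banach-lattice-valued Calder\'on--Zygmund theory applied to the sublinear operator $F\mapsto M_\w F(\cdot, \sigma)$ acting on $L^p(\R^n; L^q(\R_+))$, or reduce to the discrete version by decomposing $\R_+$ into dyadic intervals $[2^k, 2^{k+1})$ and applying the classical Fefferman--Stein $\ell^q$-valued inequality to the resulting sequence. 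Either route yields the desired bound with constant depending only on $p$, $q$, and the doubling constant of $\R^n_\w$, hence independent of $\w$.

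The main (and essentially only) point to verify is thus the uniformity of $C$ in $\w$; there is no genuine obstacle, since the underlying doubling constant, and therefore all Vitali and Fefferman--Stein constants built from it, do not see the direction $\w$. The lemma is recorded here precisely because this uniform control is what later arguments will need when integrating over $\w \in S^{n-1}$.
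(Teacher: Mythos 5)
Your argument is correct in substance and reaches the same conclusion as the paper by a slightly different route. The paper's proof is essentially a two-line citation: for each fixed $\w$ the inequality is a special case of the vector-valued maximal theorem in Stein (Section II.5.14) or of Sato's Lemma 2.8 on parabolic maximal functions, and uniformity in $\w$ is then obtained ``by rotation'': $A_{\w,\sigma}$ is conjugate to $A_{e_{1},\sigma}$ under the rotation $R_{\w}$ taking $e_{1}$ to $\w$, so $M_{\w}f=(M_{e_{1}}(f\circ R_{\w}))\circ R_{\w}^{-1}$ and the constant literally does not change, Lebesgue measure and the $L^{p}(L^{q})$ norms being rotation invariant. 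You instead obtain uniformity by tracking the doubling constant, which by \eqref{eq:volumeball} equals $2^{n+1}$ for every $\w$; this is equally valid and somewhat more robust (it would survive for anisotropic families not generated from a single dilation group by isometries), at the price of having to know that the constant in the vector-valued maximal theorem on a space of homogeneous type depends only on $p$, $q$ and the doubling constant. One caveat: of your two suggested ways of passing from the scalar to the $L^{q}(\R_{+},\ud\sigma/\sigma)$-valued estimate, only the first one (the lattice-valued form of the Fefferman--Stein theorem, i.e.\ with values in $L^{q}(\mu)$ for a general $\sigma$-finite $\mu$, which is exactly what the cited references provide) actually works. The proposed reduction to the discrete $\ell^{q}$ inequality by cutting $\R_{+}$ into dyadic blocks $I_{k}=[2^{k},2^{k+1})$ does not go through as stated: the entries of the resulting sequence would have to be the scalars $\big\|F(x,\cdot)\big\|_{L^{q}(I_{k})}$, and the pointwise bound $\big\|M_{\w}F(\cdot,\sigma)(x)\big\|_{L^{q}_{\sigma}(I_{k})}\lesssim M_{\w}\big(\|F(\cdot,\cdot)\|_{L^{q}(I_{k})}\big)(x)$ that this would require fails in general, because the supremum over radii in $M_{\w}$ is taken separately for each $\sigma$ and cannot be interchanged with the $L^{q}_{\sigma}$ norm in the needed direction. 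So you should commit to the lattice-valued formulation rather than the dyadic discretization.
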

\begin{proof}
For a fixed $\w\in S^{n-1}$, the statement is a special case of \cite[Sections II.1 and II.5.14]{Stein93} or 
\cite[Lemma 2.8]{Sato18}. But by rotation, the resulting constant is independent of $\w$.% More generally, the statement follows from the fact that $\Rn_{\w}$ is a doubling meatric measure space, with doubling constants independent of $\w$. 
\end{proof}

%By combining this lemma with \cite[Theorem 2.4]{Calderon-Torchinsky75}, and using rotation invariance, we obtain the following corollary.

\begin{corollary}\label{cor:maximal}
Let $p,q\in(1,\infty)$ and $N>(n+1)/2$. Then there exists a $C\geq 0$ such that 
\[
\Big(\int_{\Rn}\Big(\int_{0}^{\infty}\Big(\int_{\Rn}\sigma^{-\frac{n+1}{2}}\frac{|F(x-y,\sigma)|}{(1+\sigma^{-1}|y|_{\w}^{2})^{N}}\ud y\Big)^{q}\frac{\ud\sigma}{\sigma}\Big)^{p/q}\ud x\Big)^{1/p}\leq C\|F\|_{L^{p}(\Rn;L^{q}(\R_{+}))}
\]
for all $\w\in S^{n-1}$ and $F\in L^{p}(\Rn;L^{q}(\R_{+}))$.
\end{corollary}
\begin{proof}
For all $x\in\Rn$ and $\sigma>0$ one has
\begin{align*}
\int_{\Rn}\sigma^{-\frac{n+1}{2}}\frac{|F(x-y,\sigma)|}{(1+\sigma^{-1}|y|_{\w}^{2})^{N}}\ud y&\leq M_{\w}F(\cdot,\sigma)(x)\int_{\Rn}\sigma^{-\frac{n+1}{2}}\frac{1}{(1+|A_{\w,1/\sqrt{\sigma}}y|_{\w}^{2})^{N}}\ud y\\
&\lesssim M_{\w}F(\cdot,\sigma)(x),
\end{align*}
with the first inequality being classical (see e.g.~\cite[Sections II.2.1 and II.5.14]{Stein93}). Then Lemma \ref{lem:maximal} concludes the proof.
\end{proof}

In the next section, we will combine Lemma \ref{lem:maximal} with the following %anisotropic variant of a standard result involving the classical Hardy--Littlewood maximal operator. This lemma is a slightly modified 
pointwise variant of \cite[Lemma 2.3]{Sato18}. 

\begin{proposition}\label{prop:analytic}
Let $0<r<\infty$ and $\ka>0$. Then there exists a $C\geq 0$ such that, for all $\w\in S^{n-1}$, $\tau>0$, $x,y\in\Rn$ and $f\in L^{\infty}(\Rn)$ with
\[
\supp(\wh{f}\,)\subseteq\{\xi\in\Rn\mid |\xi|_{\w}\leq \ka\tau^{-1}\},
\]
one has 
\[
|f(x-y)|^{r}\leq C(1+\tau^{-1}|y|_{\w})^{n+1}M_{\w}(|f|^{r})(x).
\]
\end{proposition}
\begin{proof}
After replacing $f(x)$ by $\tau^{n+1}f(A_{\w,\tau}x)$ and using that $A_{\w,\tau}$ has determinant $\tau^{n+1}$, we may suppose that $\tau=1$. The proof is then completely analogous to that of the %case where $\rho=1$ in the
corresponding Euclidean inequality 
\begin{equation}\label{eq:isanalytic}
|g(x-y)|^{r}\lesssim (1+\rho^{-1}|y|)^{n}M(|g|^{r})(x),
\end{equation}
where $g\in L^{\infty}(\Rn)$ and $\rho>0$ are such that $\supp(\wh{g}\,)\subseteq\{\xi\in\Rn\mid |\xi|\leq \ka\rho^{-1}\}$ and  $M$ is the classical Hardy--Littlewood maximal function, given by
\begin{equation}\label{eq:ismaximal}
Mg(x):=\sup_{\tau>0}\fint_{B_{\tau}(x)}|g(z)|\ud z .
\end{equation}
One uses the mean value theorem and the fact that balls in $\R^{n}_{\w}$ are convex with respect to the Euclidean metric. For a proof of \eqref{eq:isanalytic} in the case where $\rho=1$, see e.g.~\cite[Theorem 1.3.1]{Triebel10}.
\end{proof}

%\begin{remark}\label{rem:clasHardyLittle}
%As was just indicated, Lemma \ref{lem:analytic} is an anisotropic version of a lemma involving the classical (centered) Hardy-Littlewood maximal function $M$, given by
%\begin{equation}\label{eq:ismaximal}
%Mf(x):=\sup_{\tau>0}\fint_{B_{\tau}(x)}|f(y)|\ud y
%\end{equation}
%for $x\in\Rn$ and $f\in L^{1}_{\loc}(\Rn)$. More precisely, using notation as in Lemma \ref{lem:analytic}, 
%
%for all $x,y\in\Rn$, $\sigma>0$ and $f\in L^{\infty}(\Rn)$ with $\supp(\wh{f}\,)\subseteq\{\xi\in\Rn\mid |\xi|\leq \ka\sigma^{-1}\}$. %The proof is analogous to that of Lemma \ref{lem:analytic}. 
%Also recall that Lemma \ref{lem:maximal} holds with $M_{\w}$ replaced by $M$ (see \cite[Theorem II.1.1]{Stein93}).
%\end{remark}

\subsection{Hardy spaces}\label{subsec:Hardy}

In this subsection we collect some basics on certain parabolic Hardy spaces. We refer to \cite{Calderon-Torchinsky75,Calderon-Torchinsky77,Folland-Stein82} for the general theory of parabolic Hardy spaces.

Throughout, we fix a real-valued $\Psi_0\in C^{\infty}_{c}(\R)$ such that $\supp(\Psi_0)\subseteq [1/2,2]$ and
\begin{equation}\label{eq:Psi0}
\int_{0}^{\infty}\Psi_{0}(\sigma)^{2}\frac{\ud \sigma}{\sigma}=1.
\end{equation}
We then define $\Psi \in C^\infty_c(\Rn)$ by 
\begin{equation}\label{eq:PsiRn}
\Psi(\xi):= \Psi_0(|\xi|)\quad(\xi\in\Rn)
\end{equation}
and note that 
\begin{equation}\label{eq:Psi}
\int_{0}^{\infty}\Psi(\sigma\xi)^{2}\frac{\ud \sigma}{\sigma}=1
\end{equation}
if $\xi\neq0$.

%Set $\Psi_{\sigma}(\zeta):=\Psi(\sigma\zeta)$ for $\sigma>0$ and $\zeta\in\Rn$.
Let $H^{p}(\Rn)$, for $1\leq p<\infty$, consist of those $f\in\Sw'(\Rn)$ such that 
\begin{equation}\label{eq:LittlePaley}
\|f\|_{H^{p}(\Rn)}:=\Big(\int_{\Rn}\Big(\int_{0}^{\infty}|\Psi(\sigma D)f(x)|^{2}\frac{\ud\sigma}{\sigma}\Big)^{p/2}\ud x\Big)^{1/p}<\infty.
\end{equation}
Then $H^{p}(\Rn)=L^{p}(\Rn)$ for $1<p<\infty$, while $H^{1}(\Rn)$ is the classical real Hardy space. We also write $H^{\infty}(\Rn):=\BMO(\Rn)=(H^{1}(\Rn))^{*}$, for convenience.

In this article, a bigger role will be played by the \emph{local} Hardy spaces $\HT^{p}(\Rn)$ and their Sobolev spaces. For $1<p<\infty$ one again has $\HT^{p}(\Rn)=L^{p}(\Rn)$, but we give a unified definition for all $1\leq p\leq\infty$, in part for the purpose of comparison to other function spaces that we will encounter in Definition \ref{def:spaces}. Throughout, let $\rho\in C^{\infty}_{c}(\Rn)$ be such that $\rho(\xi)=1$ for $|\xi|\leq 2$.

\begin{definition}\label{def:localHardy} 
Let $1\leq p\leq \infty$. Then $\HT^{p}(\Rn)$ consists of those $f\in\Sw'(\Rn)$ such that $\rho(D)f\in L^{p}(\Rn)$ and $(1-\rho)(D)f\in H^{p}(\Rn)$, endowed with the norm
\[
\|f\|_{\HT^{p}(\Rn)}:=\|\rho(D)f\|_{L^{p}(\Rn)}+\|(1-\rho)(D)f\|_{H^{p}(\Rn)}
\]
for $f\in\HT^{p}(\Rn)$. Moreover, we set
\[
\HT^{s,p}(\Rn):=\lb D\rb^{-s}\HT^{p}(\Rn)
\]
for $s\in\R$.
\end{definition}

As already noted, for all $1<p<\infty$ and $s\in\R$ one simply has 
\[
\HT^{s,p}(\Rn)=W^{s,p}(\Rn),
\]
while $\HT^{\infty}(\Rn)=(\HT^{1}(\Rn))^{*}$. Note that the difference between the standard Hardy space $H^{1}(\Rn)$ and the local Hardy space $\HT^1(\Rn)$ concerns the low frequencies. In particular, the low frequency localised elements of $\HT^{1}(\Rn)$ do not need to satisfy a cancellation condition.%For the standard Hardy space, atoms associated with balls of any size satisfy a cancellation condition, while for the local space $\HT^1(\Rn)$, cancellation is only required for atoms for balls of radius less than $1$.  

\begin{remark}\label{rem:Hpindependent}
For use in Section \ref{sec:spaces}, it is convenient to note that the definition of $\HT^{p}(\Rn)$ is independent of the choice of low-frequency cutoff, up to norm equivalence. That is, the function $\rho$ in Definition \ref{def:localHardy} can be replaced by any $\rho'\in C^{\infty}_{c}(\Rn)$ satisfying $\rho'\equiv1$ in a neighborhood of zero.
\end{remark}

We define the parabolic Hardy spaces in a manner analogous to \eqref{eq:LittlePaley}, using the dilations from \eqref{eq:dilations} in place of standard dilations. For $\w\in S^{n-1}$ and $\xi\in\Rn$, set 
\begin{equation}\label{eq:Psiw}
\Psi_{\w}(\xi):=\Psi_{0}(|\xi|_{\w}).
\end{equation}
Then $\Psi_{\w}\in C^{\infty}_{c}(\Rn)$, due to the fact that $\xi\mapsto |\xi|_{\w}$ is smooth on $\Rn\setminus\{0\}$ (see \cite[Lemma 1.5]{Calderon-Torchinsky75}). Moreover, $\supp(\Psi_{\w})\subseteq\{\xi\in\Rn\mid |\xi|_{\w}\in[1/2,2]\}$, and
\begin{equation}\label{eq:Psiwint}
\int_{0}^{\infty}\Psi_{\w}(A_{\w,\sigma}\xi)^{2}\frac{\ud \sigma}{\sigma}=1
\end{equation}
if $\xi\neq0$. Indeed, since $|A_{\omega, \sigma} \xi|_\omega = \sigma |\xi|_\omega$, \eqref{eq:Psiwint} is a consequence of \eqref{eq:Psi0}. 

We will write $\Psi_{\w}(A_{\w,\sigma}D)$ for the Fourier multiplier with symbol $\xi\mapsto \Psi_{\w}(A_{\w,\sigma}\xi)$. Note that $\Psi_{\w}(A_{\w,\sigma}D)$ has kernel $\sigma^{-(n+1)}\F^{-1}(\Psi_{\w})(A_{\w,1/\sigma}y)$ for $y\in\Rn$. Moreover, for each $N\geq0$ there exists a $C_{N}\geq0$, independent of $\w$, $\sigma$ and $y$, such that
\begin{equation}\label{eq:anisdecay}
|\F^{-1}(\Psi_{\w})(A_{\w,1/\sigma}y)|\leq C_{N}(1+\sigma^{-1}|y|_{\w})^{-N}.
\end{equation}
That the constant is independent of $\w$ follows by rotation.

\begin{definition}\label{def:parHardy}
Let $1\leq p<\infty$ and $\w\in S^{n-1}$. Then $H^{p}_{\w}(\Rn)$ consists of those $f\in\Sw'(\Rn)$ such that
\[
\|f\|_{H^{p}_{\w}(\Rn)}:=\Big(\int_{\Rn}\Big(\int_{0}^{\infty}|\Psi_{\w}(A_{\w,\sigma}D)f(x)|^{2}\frac{\ud\sigma}{\sigma}\Big)^{p/2}\ud x\Big)^{1/p}<\infty.
\]
\end{definition}

In fact, this is not how the parabolic Hardy spaces were originally defined in \cite{Calderon-Torchinsky75,Calderon-Torchinsky77}. However, it follows from \cite{Sato18} that, up to norm equivalence, the relevant definitions coincide.% (see also Lemma \ref{lem:Hardytent} below).

As was the case for $H^{p}(\Rn)$ and $\HT^{p}(\Rn)$, for all $\w\in S^{n-1}$ one has   
\begin{equation}\label{eq:anisLp}
H^{p}_{\w}(\Rn)=L^{p}(\Rn)\text{ if }1<p<\infty,
\end{equation}
with equivalence of norms (see \cite[Theorem 1.2]{Calderon-Torchinsky77}). Moreover, $H^{1}_{\w}(\Rn)\subseteq L^{1}(\Rn)$ continuously, by \cite[Theorem 2.7]{Folland-Stein82}. 

The following proposition relates the parabolic and classical Hardy space norms of a function with frequency support inside a paraboloid.

\begin{proposition}\label{prop:equivpar}
Let $1\leq p<\infty$ and $\ka_{1},\ka_{2}\geq1$. Then there exists a $C>0$ such that the following holds. Let $\w\in S^{n-1}$ and $f\in\Sw'(\Rn)$ satisfy
\[
\supp(\wh{f}\,)\subseteq\{\xi\in\Rn\mid |\xi|\geq\ka_{1}^{-1}, |\hat{\xi}-\w|\leq \ka_{2} |\xi|^{-1/2}\}.
\]
Then $f\in H^{p}(\Rn)$ if and only if $f\in H^{p}_{\w}(\Rn)$, in which case
\[
\frac{1}{C}\|f\|_{H^{p}(\Rn)}\leq \|f\|_{H^{p}_{\w}(\Rn)}\leq C\|f\|_{H^{p}(\Rn)}.
\]
\end{proposition}
\begin{proof}
For $1<p<\infty$, the statement follows directly from \eqref{eq:anisLp}. However, our argument applies for all $1\leq p<\infty$, so there is no need to rely on \eqref{eq:anisLp}.

Fix $\w\in S^{n-1}$ and $f\in\Sw'(\Rn)$ with the prescribed support properties. Since $H^{p}(\Rn)$ and $H^{p}_{\w}(\Rn)$ are both contained in $L^{p}(\Rn)$, we may suppose in the remainder that $f\in L^{p}(\Rn)$, which will allow us to apply Proposition \ref{prop:analytic}. 

For $\tau>0$ and $\xi\in\Rn$, one has $\Psi(\tau\xi)=0$ unless $|\xi|\in[(2\tau)^{-1},2\tau^{-1}]$. Similarly, for $\sigma>0$ one has  $\Psi(A_{\w,\sigma}\xi)=0$ unless $|A_{\w,\sigma}\xi|_{\w}\in[1/2,2]$, i.e.~unless $|\xi|_{\w}\in[(2\sigma)^{-1},2\sigma^{-1}]$. Hence Corollary \ref{cor:equivnorm} implies that
\begin{equation}\label{eq:sigmatau}
\tfrac{1}{8(4+\ka_{1}^{2})}\sigma^{2}\leq \tau\leq 8(1+\ka_{2})^{2}\sigma^{2}
\end{equation}
whenever $\Psi_{\w}(A_{\w,\sigma}D)\Psi(\tau D)f\neq 0$.

Now fix $r\in(0,\min(p,2))$. We claim that, with notation as in \eqref{eq:anismaximal} and \eqref{eq:ismaximal}, the following inequalities hold:
\begin{align}
\label{eq:maxest1}|\Psi_{\w}(A_{\w,\sigma}D)\Psi(\tau D)^{2}f(x)|&\lesssim (M_{\w}(|\Psi(\tau D)^{2}f|^{r})(x))^{1/r},\\
\label{eq:maxest2}|\Psi(\sigma D)\Psi_{\w}(A_{\w,\tau}D)^{2}f(x)|&\lesssim (M(|\Psi_{\tau}(A_{\w,\tau}D)^{2}f|^{r})(x))^{1/r},\\
\label{eq:maxest3}|\Psi(\tau D)^{2}f(x)|&\lesssim (M(|\Psi(\tau D)f|^{r})(x))^{1/r},\\
\label{eq:maxest4}|\Psi_{\w}(A_{\w,\tau}D)^{2}f(x)|&\lesssim (M_{\w}(|\Psi_{\w}(A_{\w,\tau} D)f|^{r})(x))^{1/r},
\end{align}
for implicit constants independent of $\w$, $f$, $\sigma,\tau>0$ and $x\in\Rn$. To see why this is true, first consider \eqref{eq:maxest1}. 

We may suppose that $\sigma$ and $\tau$ satisfy \eqref{eq:sigmatau}. Then, by Corollary \ref{cor:equivnorm},
\[
\supp(\F(\Psi(\tau D)^{2}f))\subseteq\{\xi\in\Rn\mid |\xi|_{\w}\leq 4(1+\ka_{2})(4+\ka_{1}^{2})^{1/2}\sigma^{-1}\}.
\]
Moreover, $f\in L^{p}(\Rn)$, by assumption, and then $\Psi(\tau D)^{2}f\in L^{\infty}(\Rn)$, by a standard Sobolev embedding. Hence \eqref{eq:anisdecay}, Proposition \ref{prop:analytic} and \eqref{eq:equivnorm} yield
\begin{align*}
&|\Psi_{\w}(A_{\w,\sigma}D)\Psi(\tau D)^{2}f(x)| 
\lesssim \int_{\R^n} |\Psi(\tau D)^{2}f(x-y)| \sigma^{-(n+1)}(1+\sigma^{-1}|y|_{\w})^{-N} \ud y \\
&\lesssim (M_{\omega}(|\Psi(\tau D)^{2}f|^r)(x))^{1/r}  \int_{\R^n} (1+\sigma^{-1}|y|_{\w})^{\frac{n+1}{r}} (1+\sigma^{-1}|y|_{\w})^{-N} \sigma^{-(n+1)}\ud y \\
&=(M_{\omega}(|\Psi(\tau D)^{2}f|^r)(x))^{1/r}  \int_{\R^n} (1+|y|_{\w})^{-(N-\frac{n+1}{r})} \ud y\lesssim (M_{\omega}(|\Psi(\tau D)^{2}f|^r)(x))^{1/r},
\end{align*}
if we choose $N>(n+1)(1+\tfrac{1}{r})$. 

The argument for \eqref{eq:maxest2} is analogous, although one relies on \eqref{eq:isanalytic} instead of Proposition \ref{prop:analytic}, which is allowed because of Corollary \ref{cor:equivnorm}. Moreover, \eqref{eq:maxest3} and \eqref{eq:maxest4} are classical estimates which can be obtained in the same manner. This proves the claim.

Now, finally, suppose that $f\in H^{p}(\Rn)$, and set $a:=(8(4+\ka^{2}))^{-1}$ and $b:=8(1+\ka)^{2}$. For $\sigma>0$ and $x\in\Rn$, \eqref{eq:Psi}, H\"{o}lder's inequality, \eqref{eq:sigmatau} and \eqref{eq:maxest1} yield
\begin{align*}
|\Psi_{\w}(A_{\w,\sigma}D)f(x)|^{2}&\leq \Big(\int_{0}^{\infty}|\Psi_{\w}(A_{\w,\sigma}D)\Psi(\tau D)^{2}f(x)|\frac{\ud\tau}{\tau}\Big)^{2}\\
&\lesssim \int_{a\sigma^{2}}^{b\sigma^{2}}(M_{\w}(|\Psi(\tau D)^{2}f|^{r})(x))^{2/r}\frac{\ud\tau}{\tau}.
\end{align*}
Hence
\begin{align*}
\int_{0}^{\infty}|\Psi_{\w}(A_{\w,\sigma}D)f(x)|^{2}\frac{\ud\sigma}{\sigma}&\lesssim \int_{0}^{\infty}\int_{\sqrt{\tau}/\sqrt{b}}^{\sqrt{\tau}/\sqrt{a}}(M_{\w}(|\Psi(\tau D)^{2}f|^{r})(x))^{2/r}\frac{\ud\sigma}{\sigma}\frac{\ud\tau}{\tau}\\
&\lesssim \int_{0}^{\infty}(M_{\w}(|\Psi(\tau D)^{2}f|^{r})(x))^{2/r}\frac{\ud\tau}{\tau}.
\end{align*}
Since $r < \min(p,2)$, Lemma \ref{lem:maximal} implies that $M_{\w}$ is bounded on $L^{p/r}(\Rn;L^{2/r}(\R_{+}))$. In fact, applying Lemma \ref{lem:maximal} with $F(x, \sigma) = |\Psi(\sigma D)^{2} f(x)|^r$, $q= 2/r$ and $p$ replaced by $p/r$, we see that 
\begin{align*}
\|f\|_{H^{p}_{\w}(\Rn)}&= \Big( \int_{\Rn} \Big( \int_0^\infty |\Psi_{\w}(A_{\w, \sigma}D) f(x)|^2 \frac{\ud\sigma}{\sigma} \Big) ^{p/2} \ud x \Big)^{1/p} \\
&\lesssim \Big( \int_{\Rn} \Big( \int_0^\infty  (M_{\w}(|\Psi(\tau D)^{2}f|^{r})(x))^{2/r}\frac{\ud\tau}{\tau} \Big) ^{p/2} \ud x \Big)^{1/p}  \\
&\lesssim \Big( \int_{\Rn} \Big( \int_0^\infty  \big(|\Psi(\tau D)^{2}f(x)|^{r}\big)^{2/r}\frac{\ud\tau}{\tau} \Big) ^{p/2} \ud x \Big)^{1/p} \\
&= \Big( \int_{\Rn} \Big( \int_0^\infty  |\Psi(\tau D)^{2}f(x)|^{2} \frac{\ud\tau}{\tau} \Big) ^{p/2} \ud x \Big)^{1/p}.
\end{align*}
On the other hand, the analogue of Lemma \ref{lem:maximal} also holds for the Hardy--Littlewood maximal function $M$ (see \cite[Section II.1]{Stein93}). Combined with \eqref{eq:maxest3}, this yields
\begin{align*}
\|f\|_{H^{p}_{\w}(\Rn)}&\lesssim \Big(\int_{\Rn} \Big( \int_0^\infty |\Psi(\tau D)^{2} f(x)|^2 \frac{\ud\tau}{\tau} \Big) ^{p/2} \ud x \Big)^{1/p} \\
&\lesssim \Big( \int_{\Rn} \Big( \int_0^\infty  (M(|\Psi(\tau D)f|^{r})(x))^{2/r}\frac{\ud\tau}{\tau} \Big) ^{p/2} \ud x \Big)^{1/p}  \\
&\lesssim \Big( \int_{\Rn} \Big( \int_0^\infty  \big(|\Psi(\tau D)f(x)|^{r}\big)^{2/r}\frac{\ud\tau}{\tau} \Big) ^{p/2} \ud x \Big)^{1/p} \\
&= \Big( \int_{\Rn} \Big( \int_0^\infty  |\Psi(\tau D)f(x)|^{2} \frac{\ud\tau}{\tau} \Big) ^{p/2} \ud x \Big)^{1/p}=\|f\|_{H^{p}(\Rn)},
\end{align*}
%\Big\|\Big(\int_{0}^{\infty}|\Psi_{\w}(A_{\w,\sigma} D)f|^{2}\frac{\ud\sigma}{\sigma}\Big)^{1/2}\Big\|_{L^{p}(\Rn)}\\
%&\lesssim \Big\|\Big(\int_{0}^{\infty}(M_{\w}(|\Psi(\tau D)f|^{r}))^{2/r}\frac{\ud\tau}{\tau}\Big)^{1/2}\Big\|_{L^{p}(\Rn)}\\
%&\lesssim \Big\|\Big(\int_{0}^{\infty}(|\Psi(\tau D)f|^{r})^{2/r}\frac{\ud\tau}{\tau}\Big)^{1/2}\Big\|_{L^{p/r}(\Rn)}^{1/r}\eqsim \|f\|_{L^{p}(\Rn)},
where in the final step we used \eqref{eq:LittlePaley}. This shows that $f\in H^{p}_{\w}(\Rn)$, with the required norm bound. The proof of the reverse inequality is analogous, relying on \eqref{eq:maxest2} and \eqref{eq:maxest4}.
\end{proof}

We conclude this subsection with two results, about fractional integration and Fourier multipliers in our present anisotropic setting. 

The first result concerns the anisotropic fractional integration operator $I_{\w,s}$, given for $\w\in S^{n-1}$, $s>0$, $f\in\Sw(\Rn)$ with $0\notin\supp(\wh{f}\,)$, and $x\in\Rn$, by
\begin{equation}\label{eq:fracan}
I_{\w,s}f(x):=\frac{1}{(2\pi)^{n}}\int_{\Rn}e^{ix\cdot\xi}|\xi|_{\w}^{-s}\wh{f}(\xi)\ud\xi.
\end{equation}
By rotation, one obtains from \cite[Theorem 4.1]{Calderon-Torchinsky77} and \eqref{eq:anisLp} 
the following proposition.

\begin{proposition}\label{prop:fracintHardy}
Let $1\leq p<q<\infty$, $\ka>0$ and $s:=(n+1)(\frac{1}{p}-\frac{1}{q})$. Then there exists a $C\geq0$ such that, for all $\w\in S^{n-1}$ and $f\in H^{p}_{\w}(\Rn)$ with $\supp(\wh{f}\,)\subseteq\{\xi\in\Rn\mid |\xi|\geq \ka\}$, one has $I_{\w,s}f\in L^{q}(\Rn)$ and 
\[
\|I_{\w,s}f\|_{L^{q}(\Rn)}\leq C\|f\|_{H^{p}_{\w}(\Rn)}.
\] 
\end{proposition}

On the other hand, the next result gives a criterion for a Fourier multiplier to be bounded on $H^{p}_{\w}(\Rn)$ itself. 

\begin{proposition}\label{prop:multHardy}
Let $p\in[1,\infty)$. Then there exist $M,C_{M},C\geq0$ such that the following holds for all $\w\in S^{n-1}$. Let $m\in C^{\infty}(\Rn)$ be such that
\[
|(\w\cdot \partial_{\xi})^{\beta}\partial_{\xi}^{\alpha} m(\xi)|\leq C_{M}\lb |\xi|_{\w}\rb^{-|\alpha|-2\beta}
\]
for all $\alpha\in\Z_{+}^{n}$ and $\beta\in\Z_{+}$ with $|\alpha|+|\beta|\leq M$, and all $\xi\in\Rn$. Then $m(D)\in \La(H^{p}_{\w}(\Rn))$ and
\[
\|m(D)\|_{\La(H^{p}_{\w}(\Rn))}\leq C.
\]
\end{proposition}
\begin{proof}
By rotating the coordinate system we may assume that $\omega=e_n$, the $n$-th coordinate direction. Then the multiplier satisfies
\[
|\partial_{\xi'}^{\alpha'} \partial_{\xi_n}^{\beta} m(\xi',\xi_n)| \leq C_M (1+|\xi'|+|\xi_n|^{1/2})^{-|\alpha'|-2\beta}.
\]
This allows us to use the results on singular integrals on homogeneous groups, as detailed in, for example, \cite{Folland-Stein82} or \cite[Chapter XIII, Section 5]{Stein93}. More precisely, we consider $\R^n$ as a Lie group with the usual addition structure, and let 
\[
\delta \circ x = (\delta^{a_1} x_1, \dots, \delta^{a_n} x_n)
\]
with $a_1 = \dots = a_{n-1} = 1$ and $a_n = 2$
(see Example 5.2.1 in \cite[Chapter XIII, Section 5]{Stein93}). Define a corresponding norm function $\rho(x) := \max_{1 \leq j \leq n} |x_j|^{1/a_j}$. 
If $K$ is the tempered distribution given by the inverse Fourier transform of $m$, then $K$ satisfies equation (61) in \cite[Chapter XIII, Section 5]{Stein93}, namely
\[
|\partial_{x'}^{\alpha'} \partial_{x_n}^{\beta} K(x)| \lesssim \rho(x)^{-a-|\alpha'|-2 \beta}, \qquad x \ne 0,
\]
where $a = a_1 + \dots + a_n = n+1$ (one can verify this by a simple adaptation of the proof of Proposition 2 in \cite[Chapter IV, Section 4.4]{Stein93}). In the terminology of \cite[p.185]{Folland-Stein82}, this says that $K$ is a kernel of type $(0,r)$ for any positive integer $r$. Thus the case $\alpha = 0$ of \cite[Theorem 6.10]{Folland-Stein82} guarantees that $m(D)$ is bounded on $H^p_{\w}(\Rn)$ for all $0 < p < \infty$; we only need the case $1 \leq p < \infty$ here. (For $1 < p < \infty$, one can also use Theorem 3 of \cite[Chapter I, Section 5]{Stein93} together with duality, as pointed out at the beginning of \cite[Chapter XIII, Section 5.3.1]{Stein93}).
\end{proof}

\subsection{Fourier integral operators}\label{subsec:FIOs}

Here we collect some background on Fourier integral operators. We refer to \cite{Hormander09,Duistermaat11} for the general theory of Fourier integral operators and the associated notions from symplectic geometry. On the other hand, in this article we will mostly work with concrete oscillatory integral representations for such operators, and in this subsection one can find the relevant definitions.

For $m\in\R$ and $\rho,\delta\in[0,1]$, recall that H\"{o}rmander's symbol class $S^{m}_{\rho,\delta}$ consists of those $a\in C^{\infty}(\R^{2n})$ such that, for all $\alpha,\beta\in\Z_{+}^{n}$, there exists a $C_{\alpha,\beta}\geq0$ such that
\begin{equation}\label{eq:seminorms}
%\|a\|_{\alpha,\beta}:=
|\partial_{x}^{\beta}\partial_{\eta}^{\alpha}a(x,\eta)|\leq C_{\alpha,\beta}\lb\eta\rb^{m-\rho|\alpha|+\delta|\beta|}
\end{equation}
for all $(x,\eta)\in\R^{2n}$. In \cite{HaPoRo20} a slightly different symbol class was considered, the elements of which have additional regularity when differentiated in the radial direction in the fiber variable. This class, denoted by $S^{m}_{\rho,\delta,1}$, consists of those $a\in C^{\infty}(\R^{2n})$ such that, for all $\alpha,\beta\in\Z^{n}_{+}$ and $\gamma\in\Z_{+}$, there exists a $C_{\alpha,\beta,\gamma}\geq0$ such that
\begin{equation}\label{eq:seminorms2}
|(\hat{\eta}\cdot \partial_{\eta})^{\gamma}\partial_{x}^{\beta}\partial_{\eta}^{\alpha}a(x,\eta)|\leq C_{\alpha,\beta,\gamma}\lb\eta\rb^{m-\rho|\alpha|+\delta|\beta|-\gamma}
\end{equation}
for all $(x,\eta)\in\R^{2n}$ with $\eta\neq0$. Note that $S^{m}_{1/2,1/2,1}$ contains $S^{m}_{1,1/2}$ but is strictly contained in $S^{m}_{1/2,1/2}$, which in turn is the critical symbol class for the calculus of Fourier integral operators, in several respects. %The statements in \ref{lem:} below extend to this larger symbol class.
 
In the following definition and throughout the rest of this article, 
\[
o:=\Rn\times\{0\}\subseteq \R^{2n}
\]
denotes the zero section.

\begin{definition}\label{def:operator}
Let $m\in\R$, $a\in S^{m}_{1/2,1/2,1}$ and $\Phi\in C^{\infty}(\R^{2n}\setminus o)$ be such that $\Phi$ is real-valued and positively homogeneous of degree $1$ in the $\eta$ variable, and $\det \partial^2_{x \eta} \Phi (x,\eta)\neq 0$ for $(x,\eta)\in\supp(a)\setminus o$. Set
\[
Tf(x):= (2\pi)^{-n} \int_{\Rn}e^{i\Phi(x,\eta)}a(x,\eta)\wh{f}(\eta)\ud\eta
\]
for $f\in\Sw(\Rn)$ and $x\in \Rn$. Then $T$ is a Fourier integral operator of order $m$ and type $(1/2,1/2,1)$ in \emph{standard form}. If, additionally, the following conditions hold:
\begin{enumerate}
\item\label{it:phase2} $\sup_{(x,\eta)\in \R^{2n}\setminus o}|\partial_{x}^{\beta}\partial_{\eta}^{\alpha}\Phi(x,\hat{\eta})|<\infty$ for all $\alpha,\beta\in\Z_{+}^{n}$ with $|\alpha|+|\beta|\geq 2$;
\item\label{it:phase3} $\inf_{(x,\eta)\in \R^{2n}\setminus o}| \det \partial^2_{x \eta} \Phi (x,\eta)|>0$;
\item\label{it:phase4} $(\partial_{\eta}\Phi(x,\eta),\eta)\mapsto (x,\partial_{x}\Phi(x,\eta))$ is a well-defined bijection on $\R^{2n}\setminus o$,
\end{enumerate}
then we say that $T$ is associated with a \emph{global canonical graph}.
\end{definition}

\begin{remark}\label{rem:oscint} 
If \eqref{it:phase4} holds, then $(\partial_{\eta}\Phi(x,\eta),\eta)\mapsto (x,\partial_{x}\Phi(x,\eta))$ is a homogeneous canonical transformation on $\R^{2n}\setminus o$, and the canonical relation of $T$ is the graph of this transformation.

If \eqref{it:phase2} and \eqref{it:phase3} hold, then \eqref{it:phase4} holds if and only if $\eta\mapsto \partial_{x}\Phi(x,\eta)$ is a bijection on $\R^{n}\setminus \{0\}$ for each $x\in\Rn$, by Hadamard's global inverse function theorem \cite[Theorem 6.2.8]{Krantz-Parks13}. Another application of the global inverse function theorem then shows that condition \eqref{it:phase4} is superfluous for $n\geq3$. Moreover, \eqref{it:phase4} holds if $\Phi(x,\eta)=x\cdot\eta+\phi(\eta)$ for some $\phi\in C^{\infty}(\Rn\setminus\{0\})$ which is positively homogeneous of degree $1$. This case will be considered frequently by us, and it is characterized by the property that $\partial_{x}\Phi(x,\eta)=\eta$ for all $(x,\eta)\in\R^{2n}\setminus o$.
\end{remark}

Recall that a compactly supported Fourier integral operator of order $m$ and type $(1,0)$, associated with a local canonical graph, can, modulo an operator with a Schwartz kernel which is a Schwartz function, be expressed as a finite sum of operators which in appropriate coordinate systems are Fourier integral operators in standard form (see e.g. \cite[Proposition 6.2.4]{Sogge17}), with symbols  in $S^{m}_{1,0}$. In this case, the symbol $a$ has compact support in the $x$-variable, and thus \eqref{it:phase2} and \eqref{it:phase3} hold on the support of $a$, while the map in \eqref{it:phase4} is a locally well-defined homogeneous canonical transformation. By contrast, for operators associated with a global canonical graph as in Definition \ref{def:operator}, the symbols are not required to have compact spatial support, but the conditions on the phase function hold uniformly on all of $\R^{2n}\setminus o$.

\section{Wave packet transforms}\label{sec:transforms}

In this section we introduce the parabolic frequency localizations that appear in the definition of the function spaces for decoupling, and we use them to define associated wave packets and wave packet transforms. We then prove some properties of these transforms, most notably the boundedness of certain operators on phase space associated with them.

\subsection{Wave packets}\label{subsec:packets}

Throughout this article, we fix a family $(\ph_{\w})_{\w\in S^{n-1}}\subseteq C^{\infty}(\Rn)$ of non-negative functions with the following
%\todo{Include a picture of the support of the functions?} 
properties:%of these functions are as follows (see \cite[Remark 3.3]{Rozendaal21}):
\begin{enumerate}
\item\label{it:phiproperties1} For all $\w\in S^{n-1}$ and $\xi\neq0$, one has $\ph_{\w}(\xi)=0$ if $|\xi|<\frac{1}{4}$ or $|\hat{\xi}-\w|>|\xi|^{-1/2}$.
\item\label{it:phiproperties2} For all $\alpha\in\Z_{+}^{n}$ and $\gamma\in\Z_{+}$, there exists a $C_{\alpha,\gamma}\geq0$ such that
\[
|(\w\cdot \partial_{\xi})^{\gamma}\partial^{\alpha}_{\xi}\ph_{\w}(\xi)|\leq C_{\alpha,\gamma}|\xi|^{\frac{n-1}{4}-\frac{|\alpha|}{2}-\gamma}
\]
for all $\w\in S^{n-1}$ and $\xi\neq0$.
\item\label{it:phiproperties3}
The map $(\xi,\w)\mapsto \ph_{\w}(\xi)$ is measurable on $\Rn\times S^{n-1}$, and%. Moreover, there exists an $m\in S^{0}(\Rn)$ such that 
\begin{equation}\label{eq:reproduce1}
\int_{S^{n-1}}\ph_{\w}(\xi)^{2}\ud\w=1
\end{equation}
for all $\xi\in\Rn$ with $|\xi|\geq 1$.
\end{enumerate}
%In \eqref{it:phiproperties3}, $S^{0}(\Rn)$ is the standard class of symbols $a\in C^{\infty}(\Rn)$ of order $0$. That is, for all $\alpha\in\Z_{+}^{n}$ there exists a $C_{\alpha}\geq 0$ such that $|\partial_{\eta}^{\alpha}a(\eta)|\leq C_{\alpha}\lb \eta\rb^{-|\alpha|}$ for all $\eta\in\Rn$. 

The picture below depicts the support of $\varphi_{\omega}$. For $R \geq 1$, the intersection of the annulus $\{\xi\in\Rn\mid |\xi| \eqsim R\}$ with the support of $\varphi_{\omega}$ is a rectangle of dimension 
\[
R\times \underbrace{R^{1/2} \times \dots \times R^{1/2}}_{\text{$n-1$  directions}}
\]
with the long side pointing in the direction of $\omega$. One has $\varphi_{\omega}(\xi)\eqsim R^{(n-1)/4}$ on such a rectangle.
\begin{center}
	\begin{tikzpicture}[scale = 0.3]
		\filldraw[blue!50] plot[smooth,domain=-4:4] ({1+(\x)^2}, {\x});
		\draw[->] (0,0) -- (4,0) node[right] {$\omega$};
		\draw[->] (0,-4) -- (0,4);
	\end{tikzpicture}
\end{center}

\begin{remark}\label{rem:parloc}
For the construction of such a collection, see e.g.~\cite[Section 3.1]{Rozendaal21}. In fact, the functions constructed there have slightly different support properties than in \eqref{it:phiproperties1}. However, the construction in \cite[Section 3.1]{Rozendaal21} also yields functions satisfying \eqref{it:phiproperties1}, if one shrinks the support of the function $\ph$ used there, and slightly modifies the definition of $\ph_{\w}$ itself. Moreover, instead of \eqref{eq:reproduce1}, from \cite[Remark 3.3]{Rozendaal21} one obtains
\[
\int_{S^{n-1}}m(\xi)\ph_{\w}(\xi)^{2}\ud \w=1\quad(|\xi|\geq 1/2)
\]
for a standard symbol $m\in S^{0}(\Rn)$. However, the proof of \cite[Lemma 3.2]{Rozendaal21} also shows that $\sqrt{m}\in S^{0}(\Rn)$, so that one may replace $\ph_{\w}$ by $\sqrt{m}\ph_{\w}$ to arrive at  \eqref{eq:reproduce1}. 
\end{remark}

The exact support properties of the $\ph_{\w}$ are not relevant for this article; all the arguments go through if the conditions in \eqref{it:phiproperties1} are modified up to fixed constants. Note also that, by Corollary \ref{cor:equivnorm},
\begin{equation}\label{eq:phisuppomega}
\supp(\ph_{\w})\subseteq\{\xi\in\Rn\mid \tfrac{1}{4}\leq |\xi|_{\w}, \tfrac{1}{2\sqrt{5}}|\xi|^{1/2}\leq |\xi|_{\w}\leq 2|\xi|^{1/2}\}
\end{equation}
for all $\w\in S^{n-1}$.

Next, we use the collection $(\ph_{\w})_{\w\in S^{n-1}}$ to construct wave packets. For $\w\in S^{n-1}$, $\sigma>0$ and $\xi\in\Rn$, set 
\begin{equation}\label{eq:psiwsigma}
\psi_{\w,\sigma}(\xi):=\Psi(\sigma\xi)\ph_{\w}(\xi),
\end{equation}
where $\Psi\in C^{\infty}_{c}(\Rn)$ is as in \eqref{eq:PsiRn}. Also let
\begin{equation}\label{eq:rhozero}
\rho_{0}(\xi):=\Big(1-\int_{0}^{1}\int_{S^{n-1}}\psi_{\w,\sigma}(\xi)^{2}\ud\w\frac{\ud\sigma}{\sigma}\Big)^{1/2}.
\end{equation}
We may assume that $\rho_{0}\in C^{\infty}_{c}(\Rn)$, as in \cite[Section 4]{HaPoRo20}. %, where an additional assumption is made on the rotation invariance of $(\ph_{\w})_{\w\in S^{n-1}}$.  

These functions have the following properties, similar to those in \cite[Lemma 4.1]{HaPoRo20}.

\begin{lemma}\label{lem:packetbounds}
For all $\w\in S^{n-1}$ and $\sigma>0$, one has $\psi_{\w,\sigma}\in C^{\infty}_{c}(\Rn)$. Each $\xi\in\supp(\psi_{\w,\sigma})$ satisfies $\frac{1}{2}\sigma^{-1}\leq |\xi|\leq 2\sigma^{-1}$, $|\hat{\xi}-\w|\leq \sqrt{2\sigma}$, $|\xi|_{\w}\geq 1/4$ and
\[
\tfrac{1}{2\sqrt{10}}\sigma^{-1/2}\leq |\xi|_{\w}\leq \sqrt{8}\sigma^{-1/2}.
\]
Moreover, 
\[
\int_{0}^{\infty}\int_{S^{n-1}}\psi_{\w,\sigma}(\xi)^{2}\ud\w\frac{\ud\sigma}{\sigma}=1
\]
for all $\xi\in\Rn$ with $|\xi|\geq 1$. 
For all $\alpha\in\Z_{+}^{n}$ and $\gamma\in\Z_{+}$, there exists a constant $C_{\alpha,\gamma}\geq0$ such that
\[
|(\w\cdot\partial_{\xi})^{\beta}\partial_{\xi}^{\alpha}\psi_{\w,\sigma}(\xi)|\leq C_{\alpha,\gamma}\sigma^{-\frac{n-1}{4}+\frac{|\alpha|}{2}+\gamma}
\]
for all $\w\in S^{n-1}$, $\sigma>0$ and $\xi\in\Rn$. Also, for each $N\geq0$ there exists a $C_{N}\geq0$ such that, for all $\w\in S^{n-1}$, $\sigma>0$ and $x\in\Rn$,
\begin{equation}\label{eq:boundspsiinverse}
|\F^{-1}(\psi_{\w,\sigma})(x)|\leq C_{N}\sigma^{-\frac{3n+1}{4}}(1+\sigma^{-1}|x|_{\w}^{2})^{-N}.
\end{equation}
In particular, $\{\sigma^{\frac{n-1}{4}}\F^{-1}(\psi_{\w,\sigma})\mid \w\in S^{n-1},\sigma>0\}\subseteq L^{1}(\Rn)$ is uniformly bounded.
Finally, $\rho_{0}\in C^{\infty}_{c}(\Rn)$ satisfies $\rho_{0}(\xi)=0$ for $|\xi|\geq 2$, and $\rho_{0}(\xi)=1$ for $|\xi|\leq 1/2$.
\end{lemma}
Note in particular that $\{\psi_{\w,\sigma}\mid \w\in S^{n-1},\sigma>0\}$ is a uniformly bounded collection in the symbol class $S^{(n-1)/4}_{1/2,1/2,1}$ from \eqref{eq:seminorms2}.

\begin{proof}
The proof is completely analogous to that of \cite[Lemma 4.1]{HaPoRo20}. The first few and the last statement follow from properties \eqref{it:phiproperties1}, \eqref{it:phiproperties2} and \eqref{it:phiproperties3} of $(\ph_{\w})_{\w\in S^{n-1}}$, together with \eqref{eq:phisuppomega} and the properties of $\Psi$. One then obtains \eqref{eq:boundspsiinverse} from integration by parts, using also the equivalent expression for $|\cdot|_{\w}$ from Lemma \ref{lem:equivnorm}.% Finally, since $\rho_{0}$ is radial, to see that $\rho_{0}$ is smooth it suffices to show that all its derivatives vanish where $\rho_{0}(\xi)=0$. 
\end{proof}

\subsection{Wave packet transforms}\label{subsec:transforms}

In this subsection we introduce wave packet transforms that lift functions on $\Rn$ to 
\[
\Spp:=S^{*}\Rn\times\R_{+}=\Rn\times S^{n-1}\times(0,\infty),
\]
endowed with the measure $\ud x\ud\w\frac{\ud\sigma}{\sigma}$. Note that the map $(x,\w,\sigma)\mapsto (x,\sigma^{-1}\w)$ identifies $\Spp$ with $\Rn\times (\Rn\setminus\{0\})$, i.e.~phase space minus the zero section.

To properly define our wave packet transforms, it will be convenient to work with a class of test functions on $\Spp$ and the associated distributions. As in \cite[Section 2.2]{HaPoRo20}, we let $\J(\Spp)$ consist of those $G\in L^{\infty}(\Spp)$ such that 
\[
(x,\w,\sigma)\mapsto (1+|x|+\max(\sigma,\sigma^{-1}))^{N}G(x,\w,\sigma)
\]
is an element of $L^{\infty}(\Spp)$ for all $N\geq0$, endowed with the topology generated by the corresponding weighted $L^{\infty}$ norms. %Here $\Upsilon:(0,\infty)\to(0,1]$ is given by
%\[
%\begin{equation}\label{eq:upsilon}
%\Upsilon(t):=\min(t,t^{-1})\quad(t>0).
%\end{equation} 
%\]
Let $\J'(\Spp)$ be the space of continuous linear $F:\J(\Spp)\to \C$, endowed with the topology induced by $\J(\Spp)$. %To avoid confusion, w
We denote the duality between $F\in\J'(\Spp)$ and $G\in\J(\Spp)$ by $\lb F,G\rb_{\Spp}$. 
If $F\in L^{1}_{\loc}(\Spp)$ is such that 
\[
G\mapsto \int_{\Spp}F(x,\w,\sigma)G(x,\w,\sigma)\ud x\ud\w\frac{\ud\sigma}{\sigma}
\]
defines an element of $\J'(\Spp)$, then we simply write $F\in\J'(\Spp)$. 

For $p,q\in[1,\infty)$, we will often work with the function space
\begin{equation}\label{eq:Lqp}
\Lqp:=L^{q}(S^{n-1};L^{p}(\Rn;L^{2}(\R_{+})))
\end{equation}
on $\Spp$, consisting of all $F\in L^{1}_{\loc}(\Spp)$ such that
\[
\|F\|_{\Lqp}:=\Big(\int_{S^{n-1}}\Big(\int_{\Rn}\Big(\int_{0}^{\infty}|F(x,\w,\sigma)|^{2}\frac{\ud\sigma}{\sigma}\Big)^{p/2}\ud x\Big)^{q/p}\ud\w\Big)^{1/q}<\infty.
\]
It is straightforward to check (see also \cite[Lemma 2.10]{HaPoRo20}) that
\begin{equation}\label{eq:inclusionLqp}
\J(\Spp)\subseteq \Lqp\subseteq \J'(\Spp)
\end{equation}
continuously, 
%for all $p,q\in[1,\infty]$, 
and that the first inclusion is dense.

We can now define our wave packet transform, in a similar manner as in \cite{HaPoRo20}. For $f\in\Sw'(\Rn)$ and $(x,\w,\sigma)\in\Spp$, set
\begin{equation}\label{eq:W}
Wf(x,\w,\sigma):=\begin{cases}
\psi_{\w,\sigma}(D)f(x)&\text{if }0<\sigma<1,\\
\ind_{[1,e]}(\sigma)\rho_{0}(D)f(x)&\text{if }\sigma\geq1.
\end{cases}
\end{equation}
Next, for $F\in \J(\Spp)$ and $x\in\Rn$, set
\begin{equation}\label{eq:V}
\begin{aligned}
VF(x):=&\int_{0}^{1}\int_{S^{n-1}}\psi_{\nu,\tau}(D)F(\cdot,\nu,\tau)(x)\ud\nu\frac{\ud\tau}{\tau}\\
&+\int_{1}^{e}\int_{S^{n-1}}\rho_{0}(D)F(\cdot,\nu,\tau)(x)\ud\nu \frac{\ud\tau}{\tau}.
\end{aligned}
\end{equation}
These transforms have the following properties.

\begin{proposition}\label{prop:bddwavetransform}
The following statements hold:
\begin{enumerate}
\item\label{it:W1} $W:L^{2}(\Rn)\to L^{2}(\Spp)$ is an isometry;
\item\label{it:W2} $W:\Sw(\Rn)\to \J(\Spp)$ and $W:\Sw'(\Rn)\to \J'(\Spp)$ are continuous;
\item\label{it:W3} $V:\J(\Spp)\to \Sw(\Rn)$ is continuous, and $V$ extends uniquely to a continuous map $V:\J'(\Spp)\to \Sw'(\Rn)$;
\item\label{it:W4} $\lb VF,g\rb_{\Rn}=\lb F,Wg\rb_{\Spp}$ for all $F\in \J'(\Spp)$ and $g\in\Sw(\Rn)$, and $\lb Wf,G\rb_{\Spp}=\lb f,VG\rb_{\Rn}$ for all $f\in\Sw'(\Rn)$ and $G\in \J(\Spp)$;
\item\label{it:W5} $VWf=f$ for all $f\in\Sw'(\Rn)$.
\end{enumerate}
\end{proposition}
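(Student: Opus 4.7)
My plan is to first establish the Calderón-type isometry in (1) via a pointwise reproducing identity on the Fourier side; the continuity statements in (2)-(3) are then bookkeeping with the kernel estimates from Lemma \ref{lem:packetbounds}; and (4)-(5) follow as algebraic consequences. For (1), the crucial identity is
\[
\int_0^8\int_{S^{n-1}}\psi_{\w,\sigma}(\xi)^2\,\ud\w\,\frac{\ud\sigma}{\sigma} + \int_8^{8e}\rho(\xi)^2\,\frac{\ud\tau}{\tau} = 1 \qquad (\xi\in\Rn).
\]
The second term reduces to $\rho(\xi)^2$, so one must show that $\int_0^8\int_{S^{n-1}}\psi_{\w,\sigma}(\xi)^2\,\ud\w\,\ud\sigma/\sigma = 1-\rho(\xi)^2$. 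The support of $\Psi$ forces the $\sigma$-integration to run over $[(2|\xi|)^{-1},2|\xi|^{-1}]$, which is contained in $(0,8)$ whenever $|\xi|\geq 1/4$, after which \eqref{eq:Psi} and the angular normalization \eqref{eq:reproduce1} combine to produce $1-\rho(\xi)^2$. The cases $|\xi|<1/4$ (where $\ph_\w\equiv 0$ on the relevant frequencies and $\rho\equiv 1$) and $|\xi|\geq 1$ (where $\rho\equiv 0$ and $\int_{S^{n-1}}\ph_\w(\xi)^2\,\ud\w=1$) are handled analogously. Applying Plancherel's theorem on each piece of \eqref{eq:W} then yields $\|Wf\|_{L^2(\Spp)}=\|f\|_{L^2(\Rn)}$.

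For (2) and (3), the content is essentially bookkeeping with Lemma \ref{lem:packetbounds}. For $W:\Sw(\Rn)\to\J(\Spp)$: on $\sigma\in[8,8e]$ one has $Wf=\rho(D)f\in\Sw(\Rn)$ uniformly in $\sigma$, while for $\sigma\in(0,8)$ the rapid decrease of $\wh f$ together with $\supp(\psi_{\w,\sigma})\subseteq\{|\xi|\geq(2\sigma)^{-1}\}$ gives $|\psi_{\w,\sigma}(D)f(x)|=O_M(\sigma^M)$ for every $M$, supplying the $\Upsilon(\sigma)^{-N}$-decay demanded by the $\J(\Spp)$-seminorms; polynomial $x$-decay uniform in $\w$ follows from integration by parts against $\F^{-1}\psi_{\w,\sigma}$ using \eqref{eq:boundspsiinverse}. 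For $V:\J(\Spp)\to\Sw(\Rn)$: each $F(\cdot,\nu,\tau)$ is bounded and decays polynomially in $x$ of every order, with constants controlled by the $\J$-seminorms, so $\psi_{\nu,\tau}(D)F(\cdot,\nu,\tau)$ is Schwartz in $x$ uniformly in $(\nu,\tau)\in S^{n-1}\times(0,8]$ and the integral in \eqref{eq:V} converges absolutely in every Schwartz seminorm. The dual extensions $W:\Sw'(\Rn)\to\J'(\Spp)$ and $V:\J'(\Spp)\to\Sw'(\Rn)$ are then defined through the pairings of (4), which is consistent precisely because the primal maps are continuous on the test classes.

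Part (4) is a Fubini computation exploiting the self-adjointness on $L^2$ of the real-symbol Fourier multipliers $\psi_{\nu,\tau}(D)$ and $\rho(D)$; the distributional pairings are then immediate from the definition of the extensions. Part (5) follows by substituting \eqref{eq:W} into \eqref{eq:V} and commuting Fourier multipliers to obtain
\[
\widehat{VWf}(\xi) = \Big[\int_0^8\int_{S^{n-1}}\psi_{\nu,\tau}(\xi)^2\,\ud\nu\,\frac{\ud\tau}{\tau} + \rho(\xi)^2\Big]\wh f(\xi) = \wh f(\xi),
\]
where the bracket equals $1$ by the identity from (1); one verifies this first for $f\in L^2(\Rn)$, where everything converges absolutely, and then extends to $\Sw'(\Rn)$ by density using the continuity from (2)-(3). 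The main technical obstacle lies in (2) and (3): tracking the $\Upsilon(\sigma)^{-N}$-weight as $\sigma\to 0$ uniformly over $\w\in S^{n-1}$, together with the parallel polynomial $x$-decay bookkeeping, is routine but requires combining essentially all the estimates collected in Lemma \ref{lem:packetbounds}.
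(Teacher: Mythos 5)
Your proposal is correct and follows essentially the same route as the paper, which simply defers to \cite[Lemma 4.3]{HaPoRo20}: part (1) rests on the pointwise reproducing identity $\int_0^8\int_{S^{n-1}}\psi_{\w,\sigma}(\xi)^2\,\ud\w\,\tfrac{\ud\sigma}{\sigma}+\rho(\xi)^2=1$ (your support analysis for the $\sigma$-integral and the use of \eqref{eq:Psi}, \eqref{eq:reproduce1}, \eqref{eq:rho} and the unit-normalized measure on $S^{n-1}$ are exactly what is needed), parts (2)--(3) are the seminorm bookkeeping from Lemma \ref{lem:packetbounds}, and (4)--(5) follow by duality and the same multiplier identity. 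The only point worth tightening is the extension of (5) from $\Sw(\Rn)$ to $\Sw'(\Rn)$: rather than a density argument, it is cleaner to apply (4) twice, $\lb VWf,g\rb_{\Rn}=\lb Wf,Wg\rb_{\Spp}=\lb f,VWg\rb_{\Rn}=\lb f,g\rb_{\Rn}$ for $g\in\Sw(\Rn)$.
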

\begin{proof}
The statement and the proof are essentially contained in \cite[Lemma 4.3]{HaPoRo20}. More precisely, \eqref{it:W1} follows from the definition of $\rho_{0}$ in \eqref{eq:rhozero}% and from the properties of $\Psi$ and the $\ph_{\w}$
, given that we have assumed that the surface measure $\ud\w$ on $S^{n-1}$ is unit normalized. Moreover, it is proved in \cite[Lemma 4.3]{HaPoRo20} that $W:\Sw(\Rn)\to \J(\Spp)$ and $V:\J(\Spp)\to \Sw(\Rn)$ are continuous. The remaining statements follow upon confirming that $\lb Wf,G\rb_{\Spp}=\lb f,VG\rb_{\Rn}$ for all $f\in\Sw'(\Rn)$ and $G\in\J(\Spp)$.
\end{proof}

\subsection{Operators on phase space}\label{subsec:operatorsphase}

In this subsection we prove a key result about the boundedness of certain operators on the function space $\Lqp$ from \eqref{eq:Lqp}.

For use in the next section, it will be convenient to formulate our result in terms of wave packet transforms associated with (possibly) different parabolic frequency localizations. More precisely, let $(\wt{\ph}_{\w})_{\w\in S^{n-1}}\subseteq C^{\infty}(\Rn)$ be a family of real-valued functions with the same properties \eqref{it:phiproperties1}, \eqref{it:phiproperties2} and \eqref{it:phiproperties3} that $(\ph_{\w})_{\w\in S^{n-1}}$ has, from Section \ref{subsec:packets}. Let $\wt{\Psi}\in C^{\infty}_{c}(\Rn)$ be real-valued, with $\supp(\wt{\Psi})\subseteq\{\xi\in\Rn\mid 1/2\leq |\xi|\leq 2\}$, and such that \eqref{eq:Psi} holds with $\Psi$ replaced by $\wt{\Psi}$. For $\w\in S^{n-1}$, $\sigma>0$ and $\xi\in\Rn$, set $\wt{\psi}_{\w,\sigma}(\xi):=\wt{\Psi}(\sigma\xi)\wt{\ph}_{\w}(\xi)$, and define $\wt{\rho}_{0}\in C^{\infty}_{c}(\Rn)$ by
\begin{equation}\label{eq:wtrho}
\wt{\rho}_{0}(\xi):=\Big(1-\int_{0}^{1}\int_{S^{n-1}}\wt{\psi}_{\w,\sigma}(\xi)^{2}\ud\w\frac{\ud\sigma}{\sigma}\Big)^{1/2}.
\end{equation}
Now set, for $f\in\Sw'(\Rn)$ and $(x,\w,\sigma)\in\Spp$,
\begin{equation}\label{eq:wtW}
\wt{W}f(x,\w,\sigma):=\begin{cases}
\wt{\psi}_{\w,\sigma}(D)f(x)&\text{if }0<\sigma<1,\\
\ind_{[1,e]}(\sigma)\wt{\rho}_{0}(D)f(x)&\text{if }\sigma\geq1.
\end{cases}
\end{equation}
Similarly, for $F\in \J(\Spp)$ and $x\in\Rn$, write
\begin{equation}\label{eq:wtV}
\begin{aligned}
\wt{V}F(x):=&\int_{0}^{1}\int_{S^{n-1}}\wt{\psi}_{\nu,\tau}(D)F(\cdot,\nu,\tau)(x)\ud\nu\frac{\ud\tau}{\tau}\\
&+\int_{1}^{e}\int_{S^{n-1}}\wt{\rho}_{0}(D)F(\cdot,\nu,\tau)(x)\ud\nu\frac{\ud\tau}{\tau}.
\end{aligned}
\end{equation}
These transforms have the same properties as $W$ and $V$, from Proposition \ref{prop:bddwavetransform}.

%Recall that we write $\R_{+}$ for the measure space $(0,\infty)$, endowed with the Haar measure $\ud\sigma/\sigma$.

\begin{theorem}\label{thm:FIOconj}
Let $p,q\in(1,\infty)$. Then 
\[
W\wt{V}:\Lqp\to \Lqp
\]
is bounded.
\end{theorem}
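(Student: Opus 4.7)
The plan is to exploit a near-diagonal structure of $W\tilde V$ on phase space, coming from the fact that $\psi_{\w,\sigma}\tilde\psi_{\nu,\tau}\equiv 0$ unless $\sigma\sim\tau$ and $|\w-\nu|\lesssim\sqrt{\sigma}$. Expanding $W\tilde V F$ via \eqref{eq:W} and \eqref{eq:wtV}, for $\sigma\in(0,8)$ one has
\[
W\tilde V F(x,\w,\sigma)=\int_{0}^{8}\int_{S^{n-1}}\psi_{\w,\sigma}(D)\tilde\psi_{\nu,\tau}(D)F(\cdot,\nu,\tau)(x)\,\ud\nu\,\frac{\ud\tau}{\tau}+(\text{low freq}),
\]
with the integrand supported on $\sigma/4\leq\tau\leq 4\sigma$ and $|\w-\nu|\leq C\sqrt{\sigma}$ by Lemma \ref{lem:packetbounds}. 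The low-frequency piece and the range $\sigma\in[8,8e]$ are elementary, since $\rho(D)$ and $\tilde\rho(D)$ are bounded Fourier multipliers on every $L^p$, so I focus on the main term.

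The first substantial step is the pointwise bound
\[
|\psi_{\w,\sigma}(D)\tilde\psi_{\nu,\tau}(D)F(\cdot,\nu,\tau)(x)|\lesssim \sigma^{-(n-1)/2}M_\nu(F(\cdot,\nu,\tau))(x)
\]
for $(\nu,\tau)$ in the support above. To prove it I would iterate integration by parts using the derivative bounds \eqref{eq:boundspsi} on both factors to obtain, for every $N\geq 0$,
\[
|\F^{-1}(\psi_{\w,\sigma}\tilde\psi_{\nu,\tau})(y)|\lesssim \sigma^{-n}(1+\sigma^{-1}|y|_{\w}^{2})^{-N},
\]
and then apply a dyadic decomposition of the convolution into anisotropic annuli $\{y:2^{j}\sqrt{\sigma}\leq|y|_{\w}<2^{j+1}\sqrt{\sigma}\}$. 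The norm $|\cdot|_{\w}$ can be replaced by $|\cdot|_{\nu}$ since the two are comparable on scale $\sqrt{\sigma}$ whenever $|\w-\nu|\leq C\sqrt{\sigma}$.

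The crucial cancellation now enters: the factor $\sigma^{-(n-1)/2}$ exactly compensates the measure $|\{\nu\in S^{n-1}:|\w-\nu|\leq C\sqrt{\sigma}\}|\eqsim \sigma^{(n-1)/2}$, so that the $(\nu,\tau)$-integral becomes a normalized average over an anisotropic neighborhood of $(\w,\sigma)$. Applying Cauchy--Schwarz in $\tau$, integrating in $\ud\sigma/\sigma$, and swapping the order of integration yields
\[
\int_{0}^{\infty}|W\tilde V F(x,\w,\sigma)|^{2}\frac{\ud\sigma}{\sigma}\lesssim \int_{0}^{\infty}\big(\mathcal M_{S^{n-1}}G(x,\cdot,\tau)\big)(\w)^{2}\,\frac{\ud\tau}{\tau},
\]
where $G(x,\nu,\tau):=M_\nu(F(\cdot,\nu,\tau))(x)$ and $\mathcal M_{S^{n-1}}$ is the (centered) Hardy--Littlewood maximal operator on $S^{n-1}$.

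The proof then concludes by combining two vector-valued maximal inequalities. Lemma \ref{lem:maximal} gives, uniformly in $\nu\in S^{n-1}$, $\|G(\cdot,\nu,\cdot)\|_{L^p(\Rn;L^2(\R_+))}\lesssim \|F(\cdot,\nu,\cdot)\|_{L^p(\Rn;L^2(\R_+))}$, and integration over $\nu$ gives $\|G\|_{\Lqp}\lesssim \|F\|_{\Lqp}$. Since $L^p(\Rn;L^2(\R_+))$ is a Banach lattice with Boyd indices in $(1,\infty)$ for $1<p<\infty$, the vector-valued Hardy--Littlewood inequality shows that $\mathcal M_{S^{n-1}}$ is bounded on $L^q(S^{n-1};L^p(\Rn;L^2(\R_+)))$ for $1<q<\infty$, which yields the desired estimate. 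The main obstacle is the first step: establishing the pointwise kernel bound and, more importantly, the algebraic cancellation between the operator-size factor $\sigma^{-(n-1)/2}$ and the angular measure $\sigma^{(n-1)/2}$; it is precisely the parabolic scaling built into the wave packets that makes this cancellation exact.
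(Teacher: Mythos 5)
Your proposal is correct and follows essentially the same route as the paper: both exploit the near-diagonal support ($\tau\eqsim\sigma$, $|\w-\nu|\lesssim\sqrt\sigma$), the parabolic kernel decay to dominate the main term by the anisotropic maximal function $M_\nu$ applied to a normalized angular average (with the $\sigma^{\pm(n-1)/2}$ cancellation you highlight), and then conclude by combining Lemma \ref{lem:maximal} with Rubio de Francia's vector-valued Hardy--Littlewood inequality on $L^q(S^{n-1})$ with values in the UMD lattice $L^p(\Rn;L^2(\R_+))$. The only cosmetic difference is that the paper keeps $\tilde\psi_{\nu,\tau}(D)F$ intact and applies $M_\w$ and then a second anisotropic maximal bound at the end, whereas you bound the composed kernel $\F^{-1}(\psi_{\w,\sigma}\tilde\psi_{\nu,\tau})$ in one step; both are valid.
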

\begin{proof}
By density of $\J(\Spp)$ in $\Lqp$, it suffices to show that 
\[
\|W\wt{V}F\|_{\Lqp}\lesssim \|F\|_{\Lqp}
\]
for all $F\in\J(\Spp)$. Fix $F\in\J(\Spp)$, so that all the quantities which appear below are a priori well defined.

Note that
\[
W\wt{V}F = I + II + III + IV,
\]
where, for $(x,\w,\sigma)\in\Spp$,
\begin{align*}
I(x,\w,\sigma) &:= \ind_{(0,1)}(\sigma)\psi_{\w,\sigma}(D)\Big(\int_{0}^{1}\int_{S^{n-1}}\wt{\psi}_{\nu,\tau}(D)F(\cdot,\nu,\tau)\ud\nu\frac{\ud\tau}{\tau}\Big)(x),\\
II(x,\w,\sigma) &:= \ind_{(0,1)}(\sigma)\psi_{\w,\sigma}(D)\Big(\int_{1}^{e}\int_{S^{n-1}}\wt{\rho}_{0}(D)F(\cdot,\nu,\tau)\ud\nu\frac{\ud\tau}{\tau}\Big)(x),\\
III(x,\w,\sigma) &:= \ind_{[1,e]}(\sigma)\rho_{0}(D)\Big(\int_{0}^{1}\int_{S^{n-1}}\wt{\psi}_{\nu,\tau}(D)F(\cdot,\nu,\tau)\ud\nu\frac{\ud\tau}{\tau}\Big)(x),\\
IV(x,\w,\sigma) &:= \ind_{[1,e]}(\sigma)\rho_{0}(D)\Big(\int_{1}^{e}\int_{S^{n-1}}\wt{\rho}_{0}(D)F(\cdot,\nu,\tau)\ud\nu\frac{\ud\tau}{\tau}\Big)(x).
\end{align*}
We will bound each of these terms individually. 

\subsubsection{Term $I$}

For $(x,\w,\sigma)\in\Spp$, we start by using the support properties of $\psi_{\w,\sigma}$ and $\wt{\psi}_{\nu,\tau}$, from Lemma \ref{lem:packetbounds}, and a change of variables:
\begin{equation}\label{eq:FIOconj1}
\begin{aligned}
| I(x,\w,\sigma) |
& = \Big|\ind_{(0,1)}(\sigma){\psi}_{\w,\sigma}(D)\Big(\int_{1/4}^{4}\int_{S^{n-1}}\wt{\psi}_{\nu,\alpha \sigma}(D)F(\cdot,\nu,\alpha \sigma)\ud\nu\frac{\ud\alpha}{\alpha}\Big)(x)\Big|\\
&\lesssim \int_{1/4}^{4}| \sigma^{\frac{n-1}{2}} {\psi}_{\w,\sigma}(D) F_{I}(\cdot,\w,\alpha\sigma)(x)| \frac{\ud\alpha}{\alpha},
\end{aligned}
\end{equation}
where
\[
F_{I}(y,\w,\alpha\sigma):=\ind_{(0,1)}(\sigma)\fint_{|\nu-\omega| \leq 6\sqrt{\sigma}}  \wt{\psi}_{\nu,\alpha \sigma}(D)F(\cdot,\nu,\alpha \sigma)(y)\ud\nu
\]
for $\alpha\in[1/4,4]$ and $y\in\Rn$. Next, note that the kernel bounds for $\psi_{\w,\sigma}(D)$ from \eqref{eq:boundspsiinverse} yield
\begin{align*}
|\psi_{\w,\sigma}(D)F_{I}(\cdot,\w,\alpha\sigma)(x)|&\leq \int_{\Rn}|\F^{-1}(\psi_{\w,\sigma})(y)F_{I}(x-y,\w,\alpha\sigma)|\ud y\\
&\lesssim \sigma^{-\frac{n-1}{4}}\int_{\Rn}\sigma^{-\frac{n+1}{2}}\frac{|F_{I}(x-y,\w,\alpha\sigma)|}{(1+\sigma^{-1}|y|^{2}_{\w})^{N}}\ud y,%\\
%&\lesssim \sigma^{-\frac{n-1}{4}}\big(M_{\w}(|\wt{F}(\cdot,\w,\alpha\sigma)|^{r})(x)\big)^{1/r}
\end{align*}
for $N\geq0$. 

If we choose $N>(n+1)/2$, then \eqref{eq:FIOconj1}, the triangle inequality and Corollary \ref{cor:maximal} show that
\begin{equation}\label{eq:FIOconj2}
\begin{aligned}
&\|I(\cdot,\w,\cdot)\|_{L^{p}(\Rn;L^{2}(\R_{+}))}\\
&\leq \int_{1/4}^{4}\Big(\int_{\Rn}\Big(\int_{0}^{\infty}\big|\sigma^{\frac{n-1}{2}}\psi_{\w,\sigma}(D)F_{I}(\cdot,\w,\alpha\sigma)(x)|^{2}\frac{\ud\sigma}{\sigma}\Big)^{p/2}\ud x\Big)^{1/p}\frac{\ud\alpha}{\alpha}\\
&\lesssim \int_{1/4}^{4}\Big(\int_{\Rn}\Big(\int_{0}^{\infty}\sigma^{\frac{n-1}{2}}|F_{I}(x,\w,\alpha\sigma)|^{2}\frac{\ud\sigma}{\sigma}\Big)^{p/2}\ud x\Big)^{1/p}\frac{\ud\alpha}{\alpha}\\
&\leq \int_{1/4}^{4}\Big(\int_{\Rn}\Big(\int_{0}^{\infty}\Big(\fint_{|\nu-\w|\leq 6\sqrt{\sigma}}|F_{I,\alpha}(x,\nu,\sigma)|\ud\nu\Big)^{2}\frac{\ud\sigma}{\sigma}\Big)^{p/2}\ud x\Big)^{1/p}\frac{\ud\alpha}{\alpha}.
\end{aligned}
\end{equation}
Here
\[
F_{I,\alpha}(x,\nu,\sigma):=\ind_{(0,1)}(\sigma)\sigma^{\frac{n-1}{4}}\wt{\psi}_{\nu,\alpha\sigma}(D)F(\cdot,\nu,\alpha\sigma)(x)
\]
for $\alpha\in[1/4,4]$ and $(x,\nu,\sigma)\in \Spp$. 

%Next, note that for all $\alpha\in[1/4,4]$, $\nu\in S^{n-1}$ with $|\nu-\w|\leq 6\sqrt{\sigma}$, and $\xi\in\Rn$, one has $\wt{\psi}_{\nu,\alpha\sigma}(\xi)=0$ unless $\frac{1}{8}\sigma^{-1}\leq |\xi|\leq 8\sigma^{-1}$ and $|\hat{\xi}-\w|\leq 10\sqrt{\sigma}$. In particular, $\wt{\psi}_{\nu,\alpha\sigma}(\xi)=0$ unless $|\wh{\xi}-\w|\leq 10\sqrt{8}|\xi|^{-1/2}$. Now Corollary \ref{cor:equivnorm} shows that
%\[
%\supp\big(\F(\wt{F}(\cdot,\w,\alpha\sigma))\big)\subseteq\{\xi\in\Rn\mid |\xi|_{\w}\leq \sqrt{8}(1+10\sqrt{8})\sigma^{-1/2}\}.
%\]
%In turn, Proposition \ref{prop:analytic} implies that, for all $z\in\Rn$, one has
%\begin{align*}
%|\wt{F}(x-y)|^{r}&\lesssim (1+\sigma^{-1/2}|y|_{\w})^{n+1}M_{\w}(|\wt{F}(\cdot,\w,\alpha\sigma)|^{r})(x)\\
%&\eqsim (1+\sigma^{-1}|y|_{\w}^{2})^{\frac{n+1}{2}}M_{\w}(|\wt{F}(\cdot,\w,\alpha\sigma)|^{r})(x).
%\end{align*}
%We 

Next, set
\[
M_{S^{n-1}}F_{I,\alpha}(x,\w,\sigma):=\sup_{B}\fint_{B}|F_{I,\alpha}(x,\nu,\sigma)|\ud\nu
\]
for $(x,\w,\sigma)\in\Spp$, where the supremum is taken over all balls $B\subseteq S^{n-1}$ containing $\w$. That is, $M_{S^{n-1}}$ is the Hardy--Littlewood maximal function on $S^{n-1}$. Then \eqref{eq:FIOconj2} and the triangle inequality yield
\begin{align*}
&\|I\|_{L^{q}_{\w}L^{p}_{x}L^{2}_{\sigma}(\Spp)}=\Big(\int_{S^{n-1}}\|I(\cdot,\w,\cdot)\|_{L^{p}(\Rn;L^{2}(\R_{+}))}^{q}\ud\w\Big)^{1/q}\\
&\lesssim \Big(\int_{S^{n-1}}\Big(\int_{1/4}^{4}\Big(\int_{\Rn}\Big(\int_{0}^{\infty}|M_{S^{n-1}}F_{I,\alpha}(x,\w,\sigma)|^{2}\frac{\ud\sigma}{\sigma}\Big)^{p/2}\ud x\Big)^{1/p}\frac{\ud\alpha}{\alpha}\Big)^{q}\ud\w\Big)^{1/q}\\
&\leq \int_{1/4}^{4}\|M_{S^{n-1}}F_{I,\alpha}\|_{\Lqp}\frac{\ud\alpha}{\alpha}.
\end{align*}
Since $p\in(1,\infty)$, the space $X:=L^{p}(\Rn;L^{2}(\R_{+}))$ is a Banach lattice with the UMD property (see e.g.~\cite[Section 4.2.c]{HyNeVeWe16}). Given that we have $q\in(1,\infty)$ as well, \cite[Theorem 1.5]{Tozoni04} (see also \cite{RubiodeFrancia86}) then implies that $M_{S^{n-1}}$ is bounded on $\Lqp=L^{q}(S^{n-1};X)$. We now find
\begin{equation}\label{eq:FIOconj3}
\|I\|_{\Lqp}\lesssim \int_{1/4}^{4}\|F_{I,\alpha}\|_{\Lqp}\frac{\ud\alpha}{\alpha},
\end{equation}
for an implicit constant independent of $F$.

Finally, for all $(x,\w,\sigma)\in\Spp$ and $\alpha\in[1/4,4]$, we can use the bounds for $\F^{-1}(\wt{\psi}_{\w,\alpha\sigma})$ from \eqref{eq:boundspsiinverse} to write
\[
|F_{I,\alpha}(x,\w,\sigma)|\lesssim \int_{\Rn}\sigma^{-\frac{n+1}{2}}\frac{|F(x-y,\w,\alpha\sigma)|}{(1+\sigma^{-1}|y|^{2}_{\w})^{N}}\ud y.
\]
So, in the same way as before, Corollary \ref{cor:maximal} can be applied to \eqref{eq:FIOconj3}, yielding
\begin{align*}
\|I\|_{L^{q}_{\w}L^{p}_{x}L^{2}_{\sigma}(\Spp)}&\lesssim \int_{1/4}^{4}\Big(\int_{S^{n-1}}\|F_{I,\alpha}(\cdot,\w,\cdot)\|^{q}_{L^{p}(\Rn;L^{2}(\R_{+}))}\ud\w\Big)^{1/q}\frac{\ud\alpha}{\alpha}\\
&\lesssim \int_{1/4}^{4}\Big(\int_{S^{n-1}}\|F(\cdot,\w,\alpha\cdot)\|^{q}_{L^{p}(\Rn;L^{2}(\R_{+}))}\ud\w\Big)^{1/q}\frac{\ud\alpha}{\alpha}\\
&\lesssim \|F\|_{\Lqp},
\end{align*}
where for the final inequality we used another change of variables. This takes care of $I$, the main term, and we now turn to the remaining terms.

\subsubsection{Term $II$} Write 
\[
F_{II}(x,\sigma):=\ind_{(1/4,1)}(\sigma)\int_{1}^{e}\int_{S^{n-1}}\wt{\rho}_{0}(D)F(\cdot,\nu,\tau)(x)\ud\nu\frac{\ud\tau}{\tau}
\]
for $x\in\Rn$ and $\sigma>0$. Then the support properties of $\psi_{\w,\sigma}$ and $\wt{\rho}_{0}$ from Lemma \ref{lem:packetbounds}, together with the kernel bounds from \eqref{eq:boundspsiinverse}, imply that
\begin{align*}
|II(x,\w,\sigma)|&=|\psi_{\w,\sigma}(D)F_{II}(\cdot,\sigma)(x)|\lesssim \sigma^{-\frac{n-1}{4}}\int_{\Rn}\sigma^{-\frac{n+1}{2}}\frac{|F_{II}(x-y,\sigma)|}{(1+\sigma^{-1}|y|_{\w}^{2})^{N}}\ud y\\
&\eqsim \int_{\Rn}\sigma^{-\frac{n+1}{2}}\frac{|F_{II}(x-y,\sigma)|}{(1+\sigma^{-1}|y|_{\w}^{2})^{N}}\ud y
\end{align*}
for all $\w\in S^{n-1}$. In the last step we used that $F_{II}(\cdot,\sigma)=0$ if $\sigma\notin (1/4,1)$.

Letting $N>(n+1)/2$, Corollary \ref{cor:maximal} now yields
\[
\|II(\cdot,\w,\cdot)\|_{L^{p}(\Rn;L^{2}(\R_{+}))}\lesssim \|F_{II}\|_{L^{p}(\Rn;L^{2}(\R_{+}))}
\]
for every $\w\in S^{n-1}$. Hence
\begin{align*}
&\|II\|_{\Lqp}=\Big(\int_{S^{n-1}}\|II(\cdot,\w,\cdot)\|_{L^{p}(\Rn;L^{2}(\R_{+}))}^{q}\ud\w\Big)^{1/q}\lesssim \|F_{II}\|_{L^{p}(\Rn;L^{2}(\R_{+}))}\\
&=\Big(\int_{\Rn}\Big(\int_{1/4}^{1}\frac{\ud\sigma}{\sigma}\Big)^{p/2}\Big|\int_{1}^{e}\int_{S^{n-1}}\wt{\rho}_{0}(D)F(\cdot,\nu,\tau)(x)\ud\nu\frac{\ud\tau}{\tau}\Big|^{p}\ud x\Big)^{1/p}\\
&\eqsim \|\wt{\rho}_{0}(D)\wt{F}_{II}\|_{L^{p}(\Rn)},
\end{align*}
where
\[
\wt{F}_{II}(y):=\int_{1}^{e}\int_{S^{n-1}}F(y,\nu,\tau)\ud\nu\frac{\ud\tau}{\tau}
\]
for $y\in\Rn$. 

Finally, we can observe that $\wt{\rho}_{0}(D)$ acts boundedly on $L^{p}(\Rn)$, use the triangle inequality, and apply H\"{o}lder's inequality twice, to obtain
\begin{align*}
\|II\|_{\Lqp}&\lesssim \|\wt{\rho}_{0}(D)\wt{F}_{II}\|_{L^{p}(\Rn)}\lesssim \|\wt{F}_{II}\|_{L^{p}(\Rn)}\\
&\leq \int_{S^{n-1}}\Big(\int_{\Rn}\Big(\int_{1}^{e}|F(y,\nu,\tau)|\frac{\ud\tau}{\tau}\Big)^{p}\ud y\Big)^{1/p}\ud\nu\\
&\leq \Big(\int_{S^{n-1}}\Big(\int_{\Rn}\Big(\int_{1}^{e}|F(y,\nu,\tau)|^{2}\frac{\ud\tau}{\tau}\Big)^{p/2}\ud y\Big)^{q/p}\ud\nu\Big)^{1/q}\\\
&\leq \|F\|_{\Lqp}.
\end{align*}
This is the required bound for term $II$.

\subsubsection{Term $III$}

Note that the dependence of $III(x,\w,\sigma)$ on $\w$ and $\sigma$ is essentially trivial. Hence we immediately obtain
\[
\|III\|_{\Lqp}=\Big(\int_{\Rn}\Big|\rho_{0}(D)\Big(\int_{0}^{1}\int_{S^{n-1}}\wt{\psi}_{\nu,\tau}(D)F(\cdot,\nu,\tau)\ud\nu\frac{\ud\tau}{\tau}\Big)(x)\Big|^{p}\ud x\Big)^{1/p}.
\]
%where for the final equality we used the support properties of $\rho_{0}$ and $\wt{\psi}_{\nu,\tau}$. 
Due to the support properties of $\rho_{0}$ and $\wt{\psi}_{\nu,\tau}$, and the fact that $\rho_{0}(D)$ is bounded on $L^{p}(\Rn)$, we obtain from this the inequalities
\begin{align*}
&\|III\|_{\Lqp}\lesssim \Big(\int_{\Rn}\Big|\int_{1/4}^{1}\int_{S^{n-1}}\wt{\psi}_{\nu,\tau}(D)F(\cdot,\nu,\tau)(x)\ud\nu\frac{\ud\tau}{\tau}\Big|^{p}\ud x\Big)^{1/p}\\
&\lesssim \int_{S^{n-1}}\Big(\int_{\Rn}\Big(\int_{1/4}^{1}|\wt{\psi}_{\nu,\tau}(D)F(\cdot,\nu,\tau)(x)|\frac{\ud\tau}{\tau}\Big)^{p}\ud x\Big)^{1/p}\ud\nu\\
&\lesssim \Big(\int_{S^{n-1}}\Big(\int_{\Rn}\Big(\int_{1/4}^{1}|\wt{\psi}_{\nu,\tau}(D)F(\cdot,\nu,\tau)(x)|^{2}\frac{\ud\tau}{\tau}\Big)^{p/2}\ud x\Big)^{q/p}\ud\nu\Big)^{1/q}.
\end{align*}
Here we also used the triangle inequality, and twice H\"{o}lder's inequality.

As before, for each $\nu\in S^{n-1}$ one can combine the kernel bounds for $\wt{\psi}_{\nu,\tau}(D)$ with Corollary \ref{cor:maximal} to write
\[
\Big(\int_{\Rn}\Big(\int_{1/4}^{1}|\wt{\psi}_{\nu,\tau}(D)F(\cdot,\nu,\tau)(x)|^{2}\frac{\ud\tau}{\tau}\Big)^{p/2}\ud x\Big)^{1/p}\lesssim \|F(\cdot,\nu,\cdot)\|_{L^{p}(\Rn;L^{2}(\R_{+}))}.
\]
Due to what we have already shown, this proves the required statement:
\[
\|III\|_{\Lqp}\lesssim \Big(\int_{S^{n-1}}\|F(\cdot,\nu,\cdot)\|_{L^{p}(\Rn;L^{2}(\R_{+}))}^{q}\ud\nu\Big)^{1/q}=\|F\|_{\Lqp}.
\]

\subsubsection{Term $IV$}

Here we can combine arguments used for the previous two terms. Again, the dependence of $IV(x,\w,\sigma)$ on $\omega$ and $\sigma$ is essentially trivial, and $\rho_{0}(D)\wt{\rho}_{0}(D)$ is bounded on $L^{p}(\Rn)$. Together with the triangle inequality and two applications of H\"{o}lder's inequality, these observations show that
\begin{align*}
&\|IV\|_{\Lqp}=\Big(\int_{\Rn}\Big|\rho_{0}(D)\wt{\rho}_{0}(D)\Big(\int_{1}^{e}\int_{S^{n-1}}F(\cdot,\nu,\tau)\ud\nu\frac{\ud\tau}{\tau}\Big)(x)\Big|^{p}\ud x\Big)^{1/p}\\
&\lesssim \Big(\int_{\Rn}\Big|\int_{1}^{e}\int_{S^{n-1}}F(y,\nu,\tau)\ud\nu\frac{\ud\tau}{\tau}\Big|^{p}\ud y\Big)^{1/p}\\
&\leq \int_{S^{n-1}}\Big(\int_{\Rn}\Big(\int_{1}^{e}|F(y,\nu,\tau)|\frac{\ud\tau}{\tau}\Big)^{p}\ud y\Big)^{1/p}\ud\nu\\
&\leq \Big(\int_{S^{n-1}}\Big(\int_{\Rn}\Big(\int_{1}^{e}|F(y,\nu,\tau)|^{2}\frac{\ud\tau}{\tau}\Big)^{p/2}\ud y\Big)^{q/p}\ud\nu\Big)^{1/q}=\|F\|_{\Lqp}.
\end{align*}
This deals with term $IV$ and concludes the proof.
\end{proof}

\section{Function spaces for decoupling}\label{sec:spaces}

In this section we first define our function spaces. We then connect these spaces to the wave packet transforms from Section \ref{sec:transforms}, and we use this connection to derive some of their basic properties, including interpolation and duality theorems.

\subsection{Definition of the spaces}\label{subsec:defspaces}

%For simplicity of notation, we write $\HT^{p}(\Rn):=L^{p}(\Rn)$ for $p\in(1,\infty)$. Also, $\HT^{1}(\Rn)$ is the local real Hardy space, with norm
%\[
%%\begin{equation}\label{eq:localH1}
%\|f\|_{\HT^{1}(\Rn)}:=\|\rho(D)f\|_{L^{1}(\Rn)}+\|(1-\rho)(D)f\|_{H^{1}(\Rn)}
%%\end{equation}
%\]
%for $f\in\HT^{1}(\Rn)$. Here $\rho\in C^{\infty}_{c}(\Rn)$ is as in \eqref{eq:rho}, although the specific choice of low-frequency cut-off is irrelevant. Moreover, $\HT^{\infty}(\Rn):=\bmo(\Rn)$ is the dual of $\HT^{1}(\Rn)$, and we write $\HT^{s,p}(\Rn):=\lb D\rb^{-s}\HT^{p}(\Rn)$ for all $p\in[1,\infty]$ and $s\in\R$.

Recall that $(\ph_{\w})_{\w\in S^{n-1}}\subseteq C^{\infty}(\Rn)$ is a fixed collection of parabolic frequency localizations, introduced in Section \ref{subsec:packets}. Also, as in Definition \ref{def:localHardy}, $\rho\in C^{\infty}_{c}(\Rn)$ satisfies $\rho(\xi)=1$ for $|\xi|\leq 2$.

\begin{definition}\label{def:spaces}
Let $p,q\in[1,\infty)$ and $s\in\R$. Then $\funpqs$ consists of those $f\in\Sw'(\Rn)$ such that $\rho(D)f\in L^{p}(\Rn)$, $\ph_{\w}(D)f\in \HT^{s,p}(\Rn)$ for almost all $\w\in S^{n-1}$, and %$[\w\mapsto \|\ph_{\w}(D)f\|_{\HT^{s,p}(\Rn)}]\in L^{q}(S^{n-1})$,  
$(\int_{S^{n-1}}\|\ph_{\w}(D)f\|_{\HT^{s,p}(\Rn)}^{q}\ud\w)^{1/q}<\infty$, 
endowed with the norm
\begin{equation}\label{eq:normdef}
\|f\|_{\funpqs}:=\|\rho(D)f\|_{L^{p}(\Rn)}+\Big(\int_{S^{n-1}}\|\ph_{\w}(D)f\|_{\HT^{s,p}(\Rn)}^{q}\ud\w\Big)^{1/q}.
\end{equation}
%Moreover, $\funpinfs$ consists of those $f\in\Sw'(\Rn)$ such that $\rho(D)f\in L^{p}(\Rn)$, $\ph_{\w}(D)f\in \HT^{s,p}(\Rn)$ for almost all $\w\in S^{n-1}$, and %$[\w\mapsto \|\ph_{\w}(D)f\|_{\HT^{s,p}(\Rn)}]\in L^{q}(S^{n-1})$,  
%$\esssup_{\w\in S^{n-1}}\|\ph_{\w}(D)f\|_{\HT^{s,p}(\Rn)}\!<\infty$, 
%endowed with the norm
%\[
%\|f\|_{\funpinfs}:=\|\rho(D)f\|_{L^{p}(\Rn)}+\esssup_{\w\in S^{n-1}}\|\ph_{\w}(D)f\|_{\HT^{s,p}(\Rn)}.
%\] 
\end{definition}
The motivation for the notation $\funpqs$ will become clear in the next subsection. It is straightforward to see that $\funpqs=\lb D\rb^{-s}\mathcal{L}_{W,0}^{q,p}(\Rn)$ for all $p,q\in[1,\infty)$ and $s\in\R$. Note also that one may replace $\HT^{s,p}(\Rn)$ by $\lb D\rb^{-s}H^{p}(\Rn)$ in Definition \ref{def:spaces}, since each $\ph_{\w}$ vanishes near zero. Then Proposition \ref{prop:equivpar} shows that in fact
\begin{equation}\label{eq:normdef2}
\|f\|_{\funpqs}\eqsim\|\rho(D)f\|_{L^{p}(\Rn)}+\Big(\int_{S^{n-1}}\|\lb D\rb^{s}\ph_{\w}(D)f\|_{H^{p}_{\w}(\Rn)}^{q}\ud\w\Big)^{1/q}
\end{equation}
for all $p,q\in[1,\infty)$, $s\in\R$ and $f\in \funpqs$.%, with the obvious modification for $q=\infty$.

\begin{remark}\label{rem:HpFIO}
For all $p\in[1,\infty)$ and $s\in\R$, the space $\mathcal{L}_{W,s}^{p,p}(\Rn)$ coincides with $\HT^{s,p}_{FIO}(\Rn)=\lb D\rb^{-s}\Hp$, as defined in \cite{HaPoRo20,Hassell-Rozendaal23}. %For $p\in[1,\infty)$ t
This follows from equivalent characterizations of $\Hp$ proved in \cite{Rozendaal21,FaLiRoSo23}. %These characterizations involve norms as in \eqref{eq:normdef}.%, up to the use of a different low-frequency cutoff. However, by the Sobolev embeddings for $\Hps$ and $\funpqs$, the choice of low-frequency cutoff is irrelevant (see Corollary \ref{cor:equivnorm}). %Since $\HT^{s,\infty}_{FIO}(\Rn)=(\HT^{-s,1}_{FIO}(\Rn))^{*}$, it then follows from Proposition \ref{prop:duality} that $\HT^{\infty,\infty;s}_{\dec}(\Rn)=\HT^{s,\infty}_{FIO}(\Rn)$ as well.
\end{remark}

\subsection{Connection to wave packet transforms}\label{subsec:funwave}

To derive some of the basic properties of our function spaces, it will be convenient to characterize them using the wave packet transform $W$ from \eqref{eq:W}, and the spaces $\Lqp$ from \eqref{eq:Lqp}. The following proposition thus explains why we use the notation $\mathcal{L}_{W,s}^{q,p}(\Rn)$. The $\mathcal{L}_{W,0}^{q,p}(\Rn)$ norm of a function $f$ is equivalent to the norm in the Lebesgue space $\Lqp$ of the lifting of $f$ via the wave packet transform $W$. We then use the operator $\lb D\rb^{-s}$ to add the smoothness parameter $s$.

\begin{proposition}\label{prop:normtransform} 
Let $p,q\in[1,\infty)$. Then there exists a $C>0$ such that the following holds for all $f\in\Sw'(\Rn)$. One has $f\in\funpqzero$ if and only if $Wf\in \Lqp$, in which case
\[
\frac{1}{C}\|f\|_{\funpqzero}\leq \|Wf\|_{\Lqp}\leq C\|f\|_{\funpqzero}.
\]
In particular, $\fun^{2,2}(\Rn)=W^{s,2}(\Rn)$ for all $s\in\R$, with equivalence of norms.
\end{proposition}
\begin{proof}
After dealing with the low-frequency terms, the first statement follows from the Littlewood--Paley characterization of $H^{p}(\Rn)$ in \eqref{eq:LittlePaley}. Then the second statement follows from Proposition \ref{prop:bddwavetransform} \eqref{it:W1}. 
\end{proof}

For the following corollary, recall the definition of the transform $V$ from \eqref{eq:V}.

\begin{corollary}\label{cor:Banach}
Let $p,q\in(1,\infty)$. Then $W:\funpqzero\to \Lqp$ is an isomorphism onto a complemented subspace, $V:\Lqp\to \funpqzero$ is bounded, and
\begin{equation}\label{eq:Visom}
V:\Lqp/\ker(V)\to \funpqzero
\end{equation}
is an isomorphism. In particular, $\funpqs$ is a Banach space for all $s\in\R$.
\end{corollary}
\begin{proof}
The first statement follows from Proposition \ref{prop:normtransform}, Theorem \ref{thm:FIOconj} and Proposition \ref{prop:bddwavetransform} \eqref{it:W5}, since $WV$ is a bounded projection on $\Lqp$ with image $W\funpqzero$, and the kernel of this projection furnishes a complementary subspace.  Next, note that $VF\in \Sw'(\Rn)$ for each $F\in \Lqp$, by \eqref{eq:inclusionLqp} and Proposition \ref{prop:bddwavetransform} \eqref{it:W3}. Hence the second statement also follows from Proposition \ref{prop:normtransform} and Theorem \ref{thm:FIOconj}.  For the same reasons, $V:\Lqp\to \funpqzero$ is surjective, which implies \eqref{eq:Visom}. The final statement in turn follows from \eqref{eq:Visom}, or alternatively because $W:\funpqzero\to W\funpqzero$ is an isomorphism onto a complemented subspace.
\end{proof}

\subsection{Basic properties}\label{subsec:properties}

Although we will mostly deal with the spaces $\funpqs$ for $p,q\in(1,\infty)$, we show here that the final statement of Corollary \ref{cor:Banach} extends to all $p,q\in[1,\infty)$. 

\begin{proposition}\label{prop:Banach2}
Let $p,q\in[1,\infty)$ and $s\in\R$. Then $\funpqs$ is a Banach space.
\end{proposition}
\begin{proof}
We make a few preliminary observations.

Firstly, since $\lb D\rb^{s}:\funpqs\to\funpqzero$ is an isomorphism, we may suppose throughout that $s=0$.

Next, note that $\funpqzero\subseteq\Sw'(\Rn)$ continuously. Indeed, it follows from integration by parts and Lemma \ref{lem:packetbounds} that the operator $\lb D\rb^{-2n}\ph_{\w}(D)$ has an $L^{1}(\Rn)$ kernel, uniformly in $\w\in S^{n-1}$. In turn, one easily obtains from this that
\[
\funpqzero\subseteq \HT^{-2n,p}(\Rn)\subseteq \Sw'(\Rn)
\]
continuously.

We also claim that, if $(f_{k})_{k=0}^{\infty}$ is a uniformly bounded sequence in $\funpqs$, and if $f\in\Sw'(\Rn)$ is such that $f_{k}\to f$ in $\Sw'(\Rn)$ as $k\to\infty$, then $f\in\funpqs$.  To prove this claim, recall from Lemma \ref{lem:packetbounds} that $\psi_{\w,\sigma}\in C^{\infty}_{c}(\Rn)$ for all $\w\in S^{n-1}$ and $\sigma>0$. 
The same is true for the functions $\phi_{\omega,\sigma}$ defined by 
$$\phi_{\omega,\sigma}(\xi):=\Phi(\sigma \xi)\varphi_{\omega}(\xi) := 
\varphi_{\omega}(\xi) \int \limits _{\sigma|\xi|} ^{\infty} \Psi_{0}(\tau)^{2} \frac{\ud \tau}{\tau} \quad \forall \xi \in \R^{n},$$
for $\sigma \geq 0$ and $\omega \in S^{n-1}$. As a result,  
\[
\Phi(\sigma D)\ph_{\w}(D)f_{k}(x)=\F^{-1}(\phi_{\w,\sigma})\ast f_{k}(x)\to \F^{-1}(\phi_{\w,\sigma})\ast f(x)=\Phi(\sigma D)\ph_{\w}(D)f(x)
\]
for all $x\in \Rn$, where we used the assumption and that this convolution is the action of $f_{k}$ on a shifted Schwartz function. Hence
\[
|\Phi(\sigma D)\ph_{\w}(D)f(x)|=\lim_{k\to\infty}|\Phi(\sigma D)\ph_{\w}(D)f_{k}(x)|\leq \liminf_{k\to\infty}\sup_{\tau>0}|\Phi(\tau D)\ph_{\w}(D)f_{k}(x)|,
\]
and Fatou's Lemma gives
\begin{align*}
\int_{\Rn}\sup_{\sigma>0}|\Phi(\sigma D)\ph_{\w}(D)f(x)|^{p}\ud x&\leq \int_{\Rn}\liminf_{k\to\infty}\sup_{\sigma>0}|\Phi(\sigma D)\ph_{\w}(D)f_{k}(x)|^{p}\ud x&\\
&\leq\liminf_{k\to\infty}\int_{\Rn}\sup_{\sigma>0}|\Phi(\sigma D)\ph_{\w}(D)f_{k}(x)|^{p}\ud x.
\end{align*}
Now, noting that $\Phi(0) = 1$ by \eqref{eq:Psi}, we can use the maximal function characterization of $H^{p}(\Rn)$, together with another application of Fatou's Lemma, to obtain
\begin{align*}
&\int_{S^{n-1}}\|\ph_{\w}(D)f\|_{H^{p}(\Rn)}^{q}\ud \w\eqsim \int_{S^{n-1}}\big\|\sup_{\sigma>0}|\Phi(\sigma D)\ph_{\w}(D)f|\big\|_{L^{p}(\Rn)}^{q}\ud \w\\
&\leq \int_{S^{n-1}}\liminf_{k\to\infty}\big\|\sup_{\sigma>0}|\Phi(\sigma D)\ph_{\w}(D)f_{k}|\big\|_{L^{p}(\Rn)}^{q}\ud \w\\
&\leq\liminf_{k\to\infty}\int_{S^{n-1}}\big\|\sup_{\sigma>0}|\Phi(\sigma D)\ph_{\w}(D)f_{k}|\big\|_{L^{p}(\Rn)}^{q}\ud \w\leq \liminf_{k\to\infty}\|f_{k}\|_{\funpqzero}^{q}<\infty,
\end{align*}
by assumption. One can show in a similar way that $\rho(D)f\in L^{p}(\Rn)$, thereby proving that indeed $f\in\funpqzero$.

After this preliminary work, let $(f_{k})_{k=0}^{\infty}$ be a Cauchy sequence in $\funpqzero$. Then, by what we have just shown, there exists an $f\in\funpqzero$ such that $f_{k}\to f$ in the topology of $\Sw'(\Rn)$. To see that in fact $\|f_{k}-f\|_{\funpqzero}\to0$, one can argue as above. More precisely, for each $k\in\Z_{+}$, Fatou's lemma yields
\begin{align*}
&\int_{S^{n-1}}\big\|\sup_{\sigma>0}|\Phi(\sigma D)\ph_{\w}(D)(f_{k}-f)|\big\|_{L^{p}(\Rn)}^{q}\ud \w\\
&\leq \int_{S^{n-1}}\liminf_{l\to\infty}\big\|\sup_{\sigma>0}|\Phi(\sigma D)\ph_{\w}(D)(f_{k}-f_{l})|\big\|_{L^{p}(\Rn)}^{q}\ud \w\\
&\leq\liminf_{l\to\infty}\int_{S^{n-1}}\big\|\sup_{\sigma>0}|\Phi(\sigma D)\ph_{\w}(D)(f_{k}-f_{l})|\big\|_{L^{p}(\Rn)}^{q}\ud \w\\
&=\liminf_{l\to\infty}\|f_{k}-f_{l}\|_{\funpqzero}^{q}\leq \limsup_{l\to\infty}\|f_{k}-f_{l}\|_{\funpqzero}^{q}.
\end{align*}
Combining this with a similar estimate for the low frequencies, one indeed obtains
\[
\limsup_{k\to\infty}\|f_{k}-f\|_{\funpqzero}\leq \limsup_{k\to\infty}\limsup_{l\to\infty}\|f_{k}-f_{l}\|_{\funpqzero}=0.\qedhere
\]
\end{proof}

Next, we show that the definition of $\funpqs$ is independent of the choice of parabolic frequency localizations. More precisely, let $(\wt{\ph}_{\w})_{\w\in S^{n-1}}\subseteq C^{\infty}(\Rn)$ be a family with the same properties \eqref{it:phiproperties1}, \eqref{it:phiproperties2} and \eqref{it:phiproperties3} that $(\ph_{\w})_{\w\in S^{n-1}}$ has, from Section \ref{subsec:packets}, and let $\wt{\rho}_{0}$ be the associated low-frequency cut-off, from \eqref{eq:wtrho}. We will use the operators $\wt{W}$ and $\wt{V}$ from \eqref{eq:wtW} and \eqref{eq:wtV}.

\begin{proposition}\label{prop:independence}
Let $p,q\in(1,\infty)$ and $s\in\R$. Then there exists a $C>0$ such that the following holds for all $f\in \Sw'(\Rn)$. \\One has $f\in\funpqs$ if and only if $\wt{\rho}(D)f\in L^{p}(\Rn)$, $\wt{\ph}_{\w}(D)f\in \HT^{s,p}(\Rn)$ for almost all $\w\in S^{n-1}$, and $(\int_{S^{n-1}}\|\wt{\ph}_{\w}(D)f\|_{\HT^{s,p}(\Rn)}^{q}\ud\w)^{1/q}<\infty$, in which case
\[
\frac{1}{C}\|f\|_{\funpqs}\leq \|f\|_{\mathcal{L}_{\widetilde{W},s}^{q,p}(\Rn)} \leq C\|f\|_{\funpqs}.
\]

%\begin{equation}\label{eq:independence}
%\[
%\frac{1}{C}\|f\|_{\funpqs}\leq \|\wt{\rho}(D)f\|_{L^{p}(\Rn)}+\Big(\!\int_{S^{n-1}}\!\|\wt{\ph}_{\w}(D)f\|_{\HT^{s,p}(\Rn)}^{q}\ud\w\Big)^{\frac{1}{q}}\!\leq C\|f\|_{\funpqs}.
%\]
%\end{equation}
\end{proposition}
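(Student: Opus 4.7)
The plan is to bootstrap from the case $s=0$ and convert the two norms into $\Lqp$-norms of wave packet transforms, then use Theorem \ref{thm:FIOconj} to interchange between them.

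First I would reduce to $s=0$. Since $\ph_{\w}$, $\wt{\ph}_{\w}$, $\rho$ and $\wt{\rho}$ are all Fourier multipliers, they commute with $\lb D\rb^{s}$, and the Bessel potential $\lb D\rb^{s}$ is an isometry from $\HT^{s,p}(\Rn)$ onto $\HT^{p}(\Rn)=L^{p}(\Rn)$ for $p\in(1,\infty)$. Using the observation after Definition \ref{def:spaces} that $\funpqs=\lb D\rb^{-s}\fun^{p,q;0}(\Rn)$, both sides of the desired inequality scale the same way under $\lb D\rb^{s}$, so it suffices to prove the equivalence for $s=0$.

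Next, I would introduce the wave packet transforms $\wt{W}$ and $\wt{V}$ associated with $(\wt{\ph}_{\w})_{\w\in S^{n-1}}$ and $\wt{\rho}$, as in \eqref{eq:wtW} and \eqref{eq:wtV}. The same proof as for Lemma \ref{lem:normtransform} (which only used properties shared by $(\wt{\ph}_{\w})_{\w\in S^{n-1}}$ and $\wt{\rho}$, namely \eqref{it:phiproperties1}--\eqref{it:phiproperties3} and the Littlewood--Paley characterization of $\HT^{p}(\Rn)$) yields
\[
\tfrac{1}{2}\Big(\|\wt{\rho}(D)f\|_{L^{p}(\Rn)}+\Big(\int_{S^{n-1}}\|\wt{\ph}_{\w}(D)f\|_{\HT^{p}(\Rn)}^{q}\ud\w\Big)^{1/q}\Big)\leq \|\wt{W}f\|_{\Lqp}
\]
and the matching upper bound, and analogously with $\wt{\rho},\wt{\ph}_{\w},\wt{W}$ replaced by $\rho,\ph_{\w},W$. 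Moreover, the analog of Proposition \ref{prop:bddwavetransform}\eqref{it:W5} gives $\wt{V}\wt{W}f=f$ in $\Sw'(\Rn)$.

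Now I would apply Theorem \ref{thm:FIOconj}. That theorem gives the boundedness of $W\wt{V}:\Lqp\to\Lqp$ for $p,q\in(1,\infty)$. Combining these ingredients for $f$ such that $\wt{W}f\in\Lqp$, we obtain
\[
\|Wf\|_{\Lqp}=\|W\wt{V}\wt{W}f\|_{\Lqp}\lesssim \|\wt{W}f\|_{\Lqp},
\]
which, after translating back through Lemma \ref{lem:normtransform} and its analog, gives one half of the required inequality. The reverse direction is symmetric: the construction of $\wt{W}, \wt{V}$ is entirely parallel to that of $W,V$, so the proof of Theorem \ref{thm:FIOconj} applies verbatim with the roles of the two collections exchanged, yielding boundedness of $\wt{W}V:\Lqp\to\Lqp$; combined with $VWf=f$ this gives $\|\wt{W}f\|_{\Lqp}\lesssim\|Wf\|_{\Lqp}$.

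The one subtlety (and the only place some care is needed) is to ensure that $f\in\Sw'(\Rn)$ satisfying the $\wt{\ph}$-side hypotheses actually lies in the domain where $\wt{W}f\in\Lqp$ and the identity $\wt{V}\wt{W}f=f$ holds; this follows by truncating in frequency, applying the chain $\|Wf\|_{\Lqp}\lesssim \|\wt{W}f\|_{\Lqp}$ to the truncations, and passing to the limit using the density statement in \eqref{eq:inclusionLqp1} together with the continuity properties of $V$ and $\wt{V}$ from Proposition \ref{prop:bddwavetransform}\eqref{it:W3}. Everything else is a straightforward translation between norms.
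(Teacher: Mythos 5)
Your proposal is correct and follows essentially the same route as the paper: reduce to $s=0$, use the reproducing identity $\wt{V}\wt{W}f=f$ together with the boundedness of $W\wt{V}$ from Theorem \ref{thm:FIOconj} to get $\|Wf\|_{\Lqp}\lesssim\|\wt{W}f\|_{\Lqp}$, translate both sides back via Lemma \ref{lem:normtransform} and its tilde analogue, and conclude by symmetry. The domain issue you flag is handled in the paper simply by proving the inequality for all $f$ with $\|\wt{W}f\|_{\Lqp}<\infty$ and invoking the tilde version of Proposition \ref{prop:Banach}, so no additional truncation argument is needed.
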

%It follows from Corollary \ref{cor:equivnorm} that one may use more general low-frequency cutoffs in the definition of $\funpqs$.
\begin{proof}
It suffices to consider the case where $s=0$. 

One has $\wt{V}\wt{W}f=f$ for all $f\in\Sw'(\Rn)$, as follows from Proposition \ref{prop:bddwavetransform} \eqref{it:W5} with $W$ and $V$ replaced by $\wt{W}$ and $\wt{V}$. Hence Corollary \ref{cor:Banach} and Theorem \ref{thm:FIOconj} yield
\begin{align*}
\|f\|_{\funpqs}&\eqsim\|Wf\|_{\Lqp}=\|W\wt{V}\wt{W}f\|_{\Lqp}\\
&\lesssim \|\wt{W}f\|_{\Lqp}
\end{align*}
for all $f\in\Sw'(\Rn)$ for which the final quantity is finite. By Corollary \ref{cor:Banach} with $W$ replaced by $\wt{W}$, this proves one of the required inequalities. The other inequality follows by symmetry.
\end{proof}

To conclude this subsection, we consider the Schwartz functions as a subset of $\funpqs$.

\begin{proposition}\label{prop:density}
Let $p,q\in(1,\infty)$ and $s\in\R$. Then 
\begin{equation}\label{eq:density}
\Sw(\Rn)\subseteq \funpqs\subseteq\Sw'(\Rn)
\end{equation}
continuously, and $\Sw(\Rn)$ lies dense in $\funpqs$.
\end{proposition}
\begin{proof}
It suffices to consider the case where $s=0$. 

Let $f\in\Sw(\Rn)$. By Proposition \ref{prop:bddwavetransform} \eqref{it:W2} and \eqref{eq:inclusionLqp}, one then has $Wf\in \J(\Spp)\subseteq \Lqp$ for all $f\in\Sw(\Rn)$. By Proposition \ref{prop:normtransform}, this in turn implies that $f\in\funpqs$. We have thus proved the first embedding in \eqref{eq:density}.

For the second embedding, recall that every $f\in\funpqzero$ satisfies $f=VWf$, by Proposition \ref{prop:bddwavetransform} \eqref{it:W5}. Hence the required statement follows by combining Proposition \ref{prop:normtransform}, \eqref{eq:inclusionLqp} and Proposition \ref{prop:bddwavetransform} \eqref{it:W2}:
\[
\funpqzero\xrightarrow{W}\Lqp\subseteq\J'(\Spp)\xrightarrow{V}\Sw'(\Rn),
\] 
where the maps and the embedding are all continuous. 

Finally, to see that $\Sw(\Rn)$ in fact lies dense in $\funpqzero$, let $f\in\funpqzero$ be given. Then $Wf\in\Lqp$, by Proposition \ref{prop:normtransform}. Since $\J(\Spp)\subseteq\Lqp$ lies dense, there exists a sequence $(F_{j})_{j=0}^{\infty}\subseteq \J(\Spp)$ such that $F_{j}\to Wf$ in $\Lqp$, as $j\to\infty$. Then, by Proposition \ref{prop:bddwavetransform} and Corollary \ref{cor:Banach}, $(VF_{j})_{j=0}^{\infty}\subseteq \Sw(\Rn)$ and
\[
\|VF_{j}-f\|_{\funpqzero}=\|VF_{j}-VWf\|_{\funpqzero}\lesssim \|F_{j}-Wf\|_{\Lqp}\to0
\]
as $j\to\infty$. 
\end{proof}

\subsection{Interpolation and duality}\label{subsec:interdual}

In this subsection we prove interpolation and duality properties of the function spaces for decoupling.

We first prove that our function spaces form a complex interpolation scale.

\begin{theorem}\label{thm:interpM}
Let $p_{0},p_{1},p,q_{1},q_{2},q\in(1,\infty)$, $s_{0},s_{1},s\in \R$ and $\theta\in[0,1]$ be such that $\frac{1}{p}=\frac{1-\theta}{p_{0}}+\frac{\theta}{p_{1}}$, $\frac{1}{q}=\frac{1-\theta}{q_{0}}+\frac{\theta}{q_{1}}$ and $s=(1-\theta)s_{0}+\theta s_{1}$. Then
\[
[\mathcal{L}_{W,s_{0}}^{q_{0},p_{0}}(\R^{n}),\mathcal{L}_{W,s_{1}}^{q_{1},p_{1}}(\R^{n})]_{\theta}=\funpqs,
\]
with equivalent norms.
\end{theorem}
\begin{proof}
It suffices to consider the case where $s_{0}=s_{1}=s=0$. It is a basic fact about interpolation of vector-valued function spaces (see \cite[Theorem 2.2.6]{HyNeVeWe16}) that
\[
[L^{q_{0}}_{\w}L^{p_{0}}_{x}L^{2}_{\sigma}(\Spp),L^{q_{1}}_{\w}L^{p_{1}}_{x}L^{2}_{\sigma}(\Spp)]_{\theta}=\Lqp.
\]
Hence one can combine Theorem \ref{thm:FIOconj} with a result about interpolation of complemented subspaces (see \cite[Theorem 1.17.1.1]{Triebel78}) to conclude that
\[
[WVL^{q_{0}}_{\w}L^{p_{0}}_{x}L^{2}_{\sigma}(\Spp),WVL^{q_{1}}_{\w}L^{p_{1}}_{x}L^{2}_{\sigma}(\Spp)]_{\theta}=WV\Lqp.
\]
Moreover, by Corollary \ref{cor:Banach}, 
\[
W:\mathcal{L}_{W,0}^{\tilde{q},\tilde{p}}(\R^{n})\to WVL^{\tilde{q}}_{\w}L^{\tilde{p}}_{x}L^{2}_{\sigma}(\Spp)
\]
is an isomorphism for all $\tilde{p},\tilde{q}\in(1,\infty)$, which proves the required statement.
\end{proof}

Next, we show that our function spaces have natural duality properties. %We write $\lb f,g\rb_{s,p}$ for the duality pairing between $f\in\HT^{-s,p'}_{FIO}(\Rn)$ and $g\in\Hps$, which in turn is equal to the standard duality $\lb f,g\rb_{\Rn}$ if $g\in\Sw(\Rn)$. Also recall the duality pairing $\lb \cdot,\cdot\rb_{K}$ for vector-valued sequence spaces over $K$ from \eqref{eq:dualityK}.

\begin{theorem}\label{thm:duality}
Let $p,q\in(1,\infty)$ and $s\in\R$. Then
\[
(\funpqs)^{*}=\mathcal{L}_{W,-s}^{q',p'}(\R^{n})
\]
with equivalent norms, where the duality pairing is given by 
%\begin{equation}\label{eq:duality}
\[
\int_{\Spp}Wf(x,\w,\sigma)Wg(x,\w,\sigma)\ud x\ud\w\frac{\ud\sigma}{\sigma}
\]
%\end{equation}
for $f \in\mathcal{L}_{W,-s}^{q',p'}(\R^{n})$ and $g\in\funpqs$, and by $\lb f,g\rb_{\Rn}$ if $g\in\Sw(\Rn)$.
\end{theorem}
\begin{proof}
%For the most part, the statement follows from Proposition \ref{prop:Banach} and the characterization of the dual of a quotient space, but we argue slightly more explicitly.
Since $\lb D\rb^{s}$ commutes with $W$, it suffices to consider the case where $s=0$.

First let $f\in\mathcal{L}_{W,0}^{q',p'}(\R^{n})$ and $g\in\funpqzero$. Then, by Corollary \ref{cor:Banach}, $Wf\in L^{q'}_{\w}L^{p'}_{x}L^{2}_{\sigma}(\Spp)$ and $Wg\in\Lqp$. Hence H\"{o}lder's inequality yields
\begin{align*}
&\Big|\int_{\Spp}Wf(x,\w,\sigma)Wg(x,\w,\sigma)\ud x\ud\w\frac{\ud\sigma}{\sigma}\Big|\\
&\leq \|Wf\|_{L^{q'}_{\w}L^{p'}_{x}L^{2}_{\sigma}(\Spp)}\|Wg\|_{\Lqp}\eqsim \|f\|_{\mathcal{L}_{W,0}^{q',p'}(\R^{n})}\|g\|_{\funpqzero}.
\end{align*}
Moreover, by Proposition \ref{prop:bddwavetransform}, if $g\in\Sw(\Rn)$ then 
\[
\int_{\Spp}Wf(x,\w,\sigma)Wg(x,\w,\sigma)\ud x\ud\w\frac{\ud\sigma}{\sigma}=\lb Wf,Wg\rb_{\Spp}=\lb f,g\rb_{\Rn}.
\]
 In particular, if $\lb Wf,Wg\rb_{\Spp}=0$ for all $g\in\funpqzero$, then $f=0$. So $\mathcal{L}_{W,0}^{q',p'}(\R^{n})\subseteq (\funpqzero)^{*}$.

Conversely, let $l\in (\funpqzero)^{*}$. Then $l\circ V\in (\Lqp)^{*}$, by Corollary \ref{cor:Banach}.  Moreover, by \cite[Theorems 1.3.10 and 1.3.21]{HyNeVeWe16}, 
\[
(\Lqp)^{*}=L^{q'}_{\w}L^{p'}_{x}L^{2}_{\sigma}(\Spp),
\]
with the natural duality pairing. Hence there exists an $F\in L^{q'}_{\w}L^{p'}_{x}L^{2}_{\sigma}(\Spp)$ such that 
\[
l(VG)=\int_{\Spp}F(x,\w,\sigma)G(x,\w,\sigma)\ud x\ud\w\frac{\ud\sigma}{\sigma}
\]
for all $G\in\Lqp$. Set $f:=VF$. Then $f\in\mathcal{L}_{W,0}^{q',p'}(\R^{n})$, by Corollary \ref{cor:Banach}. Also, Proposition \ref{prop:bddwavetransform} implies that
\[
\lb f,g\rb_{\Rn}=\lb VF,g\rb_{\Rn}=\lb F,Wg\rb_{\Spp}=l(VWg)=l(g)
\]
for all $g\in\Sw(\Rn)$. So Proposition \ref{prop:bddwavetransform} and Corollary \ref{cor:Banach} now yield
\begin{align*}
|\lb Wf,G\rb_{\Spp}|&=|\lb f,VG\rb_{\Rn}|=|\lb F,WVG\rb_{\Spp}|=|l(VWVG)|=|l(VG)|\\
&\leq \|l\| \|VG\|_{\funpqzero}\lesssim \|l\| \|G\|_{\Lqp}
\end{align*}
for all $G\in \J(\Spp)$. This concludes the proof, since Proposition \ref{prop:normtransform} implies that
\[
\|f\|_{\mathcal{L}_{W,0}^{q',p'}(\R^{n})}\eqsim \|Wf\|_{L^{q'}_{\w}L^{p'}_{x}L^{2}_{\sigma}(\Spp)}\eqsim \sup |\lb Wf,G\rb_{\Spp}|,
\]
where the supremum is over all $G\in \J(\Spp)$ with $\|G\|_{\Lqp}\leq 1$.
\end{proof}

%\begin{remark}\label{rem:Hinftydec}
%A remark about this showing that, for $p=\infty$ or $q=\infty$, or both, $\HT^{p,q;s}_{\dec}(\Rn)$ has the same basic properties as the other spaces. Pay attention to what is written about Schwartz functions being included in it though, and about continuous inclusion in tempered distributions.
%\end{remark}

%\begin{remark}\label{rem:HinftyFIO}
%Make a remark about this yielding an equivalent characterization of $\HT^{s,\infty}_{FIO}(\Rn)$.
%\end{remark}

%\begin{remark}\label{rem:Besov}
%A remark about how one gets all the same properties for the Besov spaces from \cite{Rozendaal-Schippa23}.
%\end{remark}

\section{Boundedness properties of Fourier integral operators}\label{sec:invariance}

In this section we show that the Euclidean half-wave propagators act boundedly on $\funpqs$, but that if $p\neq q$ then general Fourier integral operators do not.

\subsection{Boundedness of the Euclidean half-wave group}\label{subsec:boundedop}

We first prove that our function spaces are invariant under the action of the Euclidean half-wave propagators. The following theorem contains the first statement of Theorem \ref{thm:invarianceintro}. Recall that $\La(X)$ is the collection of bounded operators on a Banach space $X$.

\begin{theorem}\label{thm:FIObdd}
Let $p,q\in[1,\infty)$ and $s\in\R$. Then there exist $C,N\geq 0$ such that, for all $t\in\R$, one has $e^{it\sqrt{-\Delta}}\in\La(\funpqs)$, with
\[
\|e^{it\sqrt{-\Delta}}f\|_{\La(\funpqs)}\leq C(1+|t|)^{N}.
\]
\end{theorem}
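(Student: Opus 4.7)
The plan is to reduce to $s=0$ (since $\lb D\rb^s$ commutes with $e^{it\sqrt{-\Delta}}$ and $\funpqs = \lb D\rb^{-s}\funpqzero$), and then to reduce to a phase-space bound via Lemma \ref{lem:normtransform}, so that it suffices to establish
\[
\|We^{it\sqrt{-\Delta}}f\|_{\Lqp} \leq C(1+|t|)^N \|Wf\|_{\Lqp}.
\]
The crucial simplification is that $e^{it\sqrt{-\Delta}}$ is a Fourier multiplier and thus commutes with both $\psi_{\w,\sigma}(D)$ and $\rho(D)$. Hence the action on phase space is diagonal in $(\w,\sigma)$:
\[
We^{it\sqrt{-\Delta}}f(x,\w,\sigma) = e^{it\sqrt{-\Delta}}\bigl[Wf(\cdot,\w,\sigma)\bigr](x).
\]
This reduces the problem to a pointwise (in $(\w,\sigma)$) estimate: for each $g\in L^p(\Rn)$ with $\supp(\wh g)\subseteq \supp(\psi_{\w,\sigma})$ (resp. $\supp(\rho)$), show that
\[
\|e^{it\sqrt{-\Delta}}g\|_{L^p(\Rn)} \leq C(1+|t|)^N \|g\|_{L^p(\Rn)},
\]
with constants uniform in $\w\in S^{n-1}$ and $\sigma\in(0,8)$. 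If this holds, then integrating this pointwise-in-$(\w,\sigma)$ bound in $L^q_\w L^p_x L^2_\sigma$ yields the required inequality.

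For the uniform cap-localized bound I would split the phase as $t|\xi| = t\w\cdot\xi + t(|\xi|-\w\cdot\xi)$. Writing $\tau_{t\w}$ for translation by $t\w$, this yields $e^{it\sqrt{-\Delta}}g = \tau_{t\w}(M_{t,\w}g)$, where $M_{t,\w}$ is the Fourier multiplier with symbol $m_{t,\w}(\xi) := e^{it(|\xi|-\w\cdot\xi)}$. Since $\tau_{t\w}$ is an $L^p$-isometry, it suffices to bound $M_{t,\w}$ on the subspace of $L^p(\Rn)$ with Fourier support in $\supp(\psi_{\w,\sigma})$, uniformly in $\w,\sigma$ with polynomial growth in $t$. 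On the parabolic cap, $|\xi|-\w\cdot\xi = |\xi|\cdot|\hat\xi - \w|^2/2$ is of order $O(1)$, and its anisotropic derivatives adapted to $\w$ are controlled (using the fact that $|\xi|-\w\cdot\xi$ behaves like $|\Pi_\w^\perp\xi|^2/(2|\xi|)$ in these coordinates). Thus the multiplier $m_{t,\w}(\xi)\tilde\psi_{\w,\sigma}(\xi)$, with $\tilde\psi_{\w,\sigma}$ a slight thickening of $\psi_{\w,\sigma}$, satisfies anisotropic Mikhlin-type estimates with constants growing polynomially in $t$; alternatively, one can directly estimate the $L^1(\Rn)$ norm of its inverse Fourier transform, using stationary phase/integration by parts with the anisotropic norm $|\cdot|_\w$, to obtain a kernel bound of the form $(1+|t|)^N$ times an integrable envelope, so that $L^p$-boundedness follows from Young's inequality. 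For the low-frequency piece involving $\rho(D)$, a direct kernel estimate for $e^{it\sqrt{-\Delta}}\rho(D)$ (via stationary phase in $|\xi|\leq 1$) gives an $L^1$ kernel bound of order $(1+|t|)^N$, which suffices.

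The main obstacle will be verifying the uniform-in-$(\w,\sigma)$ multiplier estimate for $m_{t,\w}$ on the parabolic cap, and ensuring that the constants do not blow up as $\sigma\to 0$. The rescaling $\xi = A_{\w,\sigma^{1/2}}\eta$ (together with Proposition \ref{prop:equivpar}, which allows switching between the isotropic norm and the anisotropic $|\cdot|_\w$-norm on the cap) essentially trivializes the $\sigma$-dependence, reducing matters to a fixed model multiplier whose $L^p$-bound depends only polynomially on $t$. Rotational invariance then gives uniformity in $\w$. Once this is in place, the integration in $(\w,\sigma)$ to recover the $\Lqp$-norm is immediate from Fubini, since $e^{it\sqrt{-\Delta}}$ commutes with $\psi_{\w,\sigma}(D)$ and hence does not affect the $L^q_\w$ or $L^2_\sigma$ parts of the norm.
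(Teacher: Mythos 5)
Your proposal is correct and follows essentially the same route as the paper: reduce to $s=0$, treat the low frequencies separately, and for each fixed $\w$ split off the $L^p$-isometry $e^{it\w\cdot D}$ so that only the curvature multiplier $e^{it(|\xi|-\w\cdot\xi)}$ restricted to the parabolic cap remains, which is then controlled by an anisotropic Mikhlin/Marcinkiewicz--Lizorkin-type theorem uniformly in $\w$ by rotation. The only cosmetic difference is that the paper works directly with $\ph_{\w}(D)$ (inserting a $\sigma$-independent envelope $\chi_{\w}\equiv 1$ on $\supp(\ph_{\w})$) rather than passing through $W$ and the $\sigma$-dependent caps $\supp(\psi_{\w,\sigma})$, which slightly streamlines the vector-valued step you handle by Fubini.
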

\begin{proof}
Throughout, we may suppose that $s=0$. 

Let $\rho'\in C^{\infty}_{c}(\Rn)$ be such that $\rho'\equiv1$ on $\supp(\rho)$, where $\rho$ is as in \eqref{eq:normdef}. It then follows from straightforward kernel estimates (see also \cite[Theorem 1.18]{DosSantosFerreira-Staubach14}) that
\[
\|\rho'(D)e^{it\sqrt{-\Delta}}\|_{\La(L^{p}(\Rn))}\lesssim (1+|t|)^{N}
\]
for all $t\in\R$ and some $N\geq0$. Hence
\[
\|\rho(D)e^{it\sqrt{-\Delta}}f\|_{L^{p}(\Rn)}=\|\rho'(D)e^{it\sqrt{-\Delta}}\rho(D)f\|_{L^{p}(\Rn)}\lesssim (1+|t|)^{N}\|\rho(D)f\|_{L^{p}(\Rn)}
\]
for all $f\in\funpqzero$, and by \eqref{eq:normdef2} it suffices to prove that
\[
\Big(\int_{S^{n-1}}\|\ph_{\w}(D)e^{it\sqrt{-\Delta}}f\|_{H^{p}_{\w}(\Rn)}^{q}\ud\w\Big)^{1/q}\lesssim (1+|t|)^{N}\Big(\int_{S^{n-1}}\|\ph_{\w}(D)f\|_{H^{p}_{\w}(\Rn)}^{q}\ud\w\Big)^{1/q}.
\]
In turn, we may show that 
\[
\|\ph_{\w}(D)e^{it\sqrt{-\Delta}}f\|_{H^{p}_{\w}(\Rn)}\lesssim (1+|t|)^{N}\|\ph_{\w}(D)f\|_{H^{p}_{\w}(\Rn)},
\]
for an implicit constant independent of $\w\in S^{n-1}$ and $t\in\R$. To this end, we will adapt an argument from \cite[Section 9]{Frey-Portal20} to the setting of anisotropic Hardy spaces. 

For $x\in\Rn$, write
\[
e^{it\sqrt{-\Delta}}f(x)=e^{it(\sqrt{-\Delta}-\w\cdot D)}f(x+t\w).
\]
Since translations do not change the $H^{p}_{\w}(\Rn)$ norm, we only need to show that
\begin{equation}\label{eq:toshowwave}
\|\ph_{\w}(D)e^{it(\sqrt{-\Delta}-\w\cdot D)}f\|_{H^{p}_{\w}(\Rn)}\lesssim (1+|t|)^{N}\|\ph_{\w}(D)f\|_{H^{p}_{\w}(\Rn)}.
\end{equation}
Let $\chi_{\w}\in C^{\infty}(\Rn)$ have the following properties, similar to those of $\ph_{\w}$ from Section \ref{subsec:packets}:
\begin{enumerate}
\item\label{it:chiproperties1} For all $\xi\neq0$, one has $\chi_{\w}(\xi)=0$ if $|\xi|<\frac{1}{8}$ or $|\hat{\xi}-\w|>2|\xi|^{-1/2}$.
\item\label{it:chiproperties2} For all $\alpha\in\Z_{+}^{n}$ and $\beta\in\Z_{+}$, there exists a $C_{\alpha,\beta}\geq0$, independent of $\w$, such that
\[
|(\w\cdot \partial_{\xi})^{\beta}\partial^{\alpha}_{\xi}\chi_{\w}(\xi)|\leq C_{\alpha,\beta}|\xi|^{-\frac{|\alpha|}{2}-\beta}
\]
for all $\xi\neq0$.
\item\label{it:chiproperties3} $\chi_{\w}\equiv 1$ on $\supp(\ph_{\w})$.
\end{enumerate}
Such a function can be constructed in the same manner as $\ph_{\w}$.

We may assume, for simplicity of notation, that $\w=e_{1}$. Set
\[
m(\xi):=\chi_{\w}(\xi)e^{it(|\xi|-\w\cdot \xi)}=\chi_{\w}(\xi)e^{it(\hat{\xi}-\w)\cdot \xi}
\]
for $\xi\neq0$, and $m(0):=0$, so that $m\in C^{\infty}(\Rn)$. We claim that, for all $\alpha\in\Z_{+}^{n}$ and $\beta\in\Z_{+}$, one has
\begin{equation}\label{eq:mdecay}
|(w\cdot\partial_{\xi})^{\beta}\partial_{\xi}^{\alpha}m(\xi)|=|\partial_{\xi_{1}}^{\beta}\partial_{\xi}^{\alpha}m(\xi)|\lesssim (1+|t|)^{N}\lb \xi\rb^{-\frac{|\alpha|}{2}-\beta}
\end{equation}
for $\xi\in\Rn$. To check that this is indeed the case, one can invoke properties \eqref{it:chiproperties1} and \eqref{it:chiproperties2} of $\chi_{\w}$. In particular, to deal with the term involving $\partial_{\xi}^{\alpha}$, one should use the condition that $|\hat{\xi}-\w|\leq 2|\xi|^{-1/2}$ for $\xi\in\supp(\chi_{\w})$. To deal with the term involving $\partial_{\xi_{1}}^{\beta}$, observe additionally that $|\xi_{1}-|\xi||\lesssim 1$ if $\xi\in\supp(\chi_{\w})$, as follows by writing
\[
2|\xi|(|\xi|-\xi_{1})=|\xi_{1}-|\xi||^{2}+\xi_{2}^{2}+\ldots+\xi_{n}^{2}=|\xi-|\xi|\w|^{2}=|\xi|^{2}|\hat{\xi}-\w|^{2}\leq 4|\xi|,
\]
where we again used that $|\hat{\xi}-\w|\leq 2|\xi|^{-1/2}$.

Next, due to property \eqref{it:chiproperties1} of $\chi_{\w}$, we can combine \eqref{eq:mdecay} with Corollary \ref{cor:equivnorm} to see that
\[
|(w\cdot\partial_{\xi})^{\beta}\partial_{\xi}^{\alpha}m(\xi)|\lesssim (1+|t|)^{N}\lb |\xi|_{\w}\rb^{-|\alpha|-2\beta}
\]
for all $\xi\in\Rn$. Now Proposition \ref{prop:multHardy} implies that $m(D)\in\La(H^{p}_{\w}(\Rn))$, with an operator norm which is uniformly bounded in $\w$ and polynomially growing in $t$. Finally, we can use property \eqref{it:chiproperties3} of $\chi_{\w}$ to write
\begin{align*}
\|\ph_{\w}(D)e^{it(\sqrt{-\Delta}-\w\cdot D)}f\|_{H^{p}_{\w}(\Rn)}&=\|m(D)\ph_{\w}(D)f\|_{H^{p}_{\w}(\Rn)}\\
&\lesssim (1+|t|)^{N}\|\ph_{\w}(D)f\|_{H^{p}_{\w}(\Rn)}
\end{align*}
thereby yielding \eqref{eq:toshowwave} and concluding the proof.
\end{proof}

\begin{remark}\label{rem:rate}
It follows from \cite[Proposition 3.2]{Rozendaal-Schippa23} that, for all $p,q\in[1,\infty)$ and $s\in\R$, one has $N\geq 2s(p)$ in Theorem \ref{thm:FIObdd}. More precisely, here one can use that, up to constants which depend on $p$, $q$ and $s$, the $\funpqs$ norm of a function $f\in\Sw(\Rn)$ with $\supp(\wh{f}\,)\subseteq \{\xi\in\Rn\mid |\xi|\leq 1\}$ is equivalent to the $L^{p}(\Rn)$ norm of $f$, as can for example be seen from \eqref{eq:normdef}.   

Moreover, \cite[Proposition 3.1]{Rozendaal-Schippa23} is an analogue of Theorem \ref{thm:FIObdd} for the Besov spaces associated with $\funpqs$, and there one has $N=2s(p)$. In future work, the same bound will be established in Theorem \ref{thm:FIObdd}, at least for $p=q$.  
\end{remark}

\subsection{Unboundedness of Fourier integral operators when $p \neq q$}\label{subsec:unboundedop}

We recall from \cite{HaPoRo20} (see also \cite[Corollary 2.4]{LiRoSoYa24} and Corollary \ref{cor:fracFIO} below) that $\fun^{p,p}(\Rn)=\Hps$ is invariant under any  %compactly supported FIO of order zero and type $(1,0)$, associated with a local canonical graph, for all $p\in(1,\infty)$ and $s\in\R$. Moreover, $\Hps$ is invariant under any 
Fourier integral operator of order zero as in Definition \ref{def:operator}, if the symbol has compact support in the spatial variable.  In this section we show that, by contrast, for all $q\in[1,\infty)\setminus\{p\}$, the space $\funpqzero$ is in general \emph{not} invariant under such Fourier integral operators.

More precisely, we will give an explicit example of a single operator $T$, essentially a simple change of coordinates that can be expressed as in Definition \ref{def:operator}, which is not bounded on $\mathcal{L}_{W,0}^{q,p}(\R^{2})$ for all $p,q\in(1,\infty)$ with $p\neq q$. Then, for $s\in\R$, the operator $\lb D\rb^{s}T\lb D\rb^{-s}$ has similar properties as $T$, and it is not bounded on $\mathcal{L}_{W,s} ^{q,p}(\R^{2})$. In fact, it will follow from the construction that a similar procedure can be used to show, for any $n\geq2$, that $\funpqs$ is not invariant under general Fourier integral operators of order zero. Moreover, cf.~Remark \ref{rem:wavenotbdd}, the solution operators to variable-coefficient wave equations will also typically not leave $\funpqs$ invariant, unless $p=q$.

For our example, let $\psi: \RR^2 \to \RR^2$ be the diffeomorphism given by
%\[
%\phi(y_1, y_2):=(y_2, (1+y_2^2) y_1)
%\]
%for $(y_{1},y_{2})\in\R^{2}$, and denote its %inverse by $\psi$. Clearly, we have 
\[
\psi(x_1, x_2):=\Big(\frac{x_2}{1+x_1^2}, x_1 \Big)
\]
for $(x_{1},x_{2})\in\R^{2}$, with inverse 
\begin{equation}\label{eq:phi}
\phi(y_1, y_2):=(y_2, (1+y_2^2) y_1)
\end{equation}
for $(y_{1},y_{2})\in\Rtwo$. Now, we let $T$ be the operation of pullback by $\psi$, followed by multiplication by a smooth cutoff function $k\in C^{\infty}_{c}(\R^{2})$ with $k(x)=1$ for $|x| \leq 4$, and $k(x)=0$ for $|x| \geq 8$. That is, 
$$
Tf(x) := k(x)f(\psi(x)) 
$$
for $f\in\Sw(\Rtwo)$ and $x\in\Rtwo$.
Thus, $T$ is a properly supported FIO of order zero, with Schwartz kernel $K(x,y)=k(x)\delta_{\psi(x)}(y)$, associated to the canonical graph
$$
\{ (x, \xi, y, \eta)\in\R^{8} \mid y = \psi(x), \ \xi = d\psi^t \eta \},
$$
where %$\phi$ is the inverse of $\psi$,\ and 
$d\psi^t$ is the transpose of $d\psi$, mapping $T_{x}^* \Rtwo$ to $T_{\psi(x)}^* \Rtwo$. In fact, $T$ is an operator as in Definition \ref{def:operator}, with symbol $k$ and phase function $\Phi(x,\eta)=\psi(x)\cdot\eta$. As such, it is bounded on $\fun^{p,p}(\Rtwo)=\HT^{s,p}_{FIO}(\Rtwo)$ for all $p\in[1,\infty]$ and $s\in\R$.

The following theorem contains the second statement in Theorem \ref{thm:invarianceintro} for $n=2$, with the result for general $n$ following from a similar argument.

\begin{theorem}\label{thm:FIOnotbdd}
 Let $p,q\in[1,\infty)$ be such that $p\neq q$. Then $T$ is not bounded on $\mathcal{L}_{W,0}^{q,p}(\Rtwo)$. 
\end{theorem}

%\begin{remark}\label{rem:FIOnotbdd}
%For $s\in\R$, the operator %$T_{s}:=
%$\lb D\rb^{s}T\lb D\rb^{-s}$ has similar properties as $T$, and it is not bounded on $\fun^{p,q;s}(\Rtwo)$. Moreover, it follows from the proof of Theorem \ref{thm:FIOnotbdd} that one can use a similar construction to show, for all $n\geq 2$, that $\funpqs$ is not invariant under general FIOs of order zero. 
%\end{remark}

%We claim that, if $p \neq q$, then $T$ is not bounded on $\fun^{0,p,q}(\Rn)$. 

%We first consider the case $q < p$. 
Before getting into the technical details of the proof, we describe the idea. 

Any Fourier integral operator %, including $T$, 
maps a wave packet, ``located" at $(x, \omega) \in S^* \RR^n$, to another (slightly distorted) wave packet, located at $\chi(x, \omega)$, where $\chi$ is the map on $S^* \RR^n$ induced by the (homogeneous) canonical relation of the Fourier integral operator. Below, we will choose a function $u$ which is a sum of many wave packets all pointing in the same direction $\omega_0\in S^{1}$. 
We shall see that the $\mathcal{L}_{W,0}^{q,p}(\R^{2})$ norm of $u$ is then an $\ell^{p}$ sum of the $L^p(\R^{2})$ norms of the individual wave packets. % in, essentially, a ``little $\ell^p$ way." 
Thus the growth of the norm in the number of wave packets $N$ will be approximately $N^{1/p}$. On the other hand, we have chosen $T$ such that the image of these wave packets will all have \emph{different} directions. % $\omega$. 
As a result, the $\mathcal{L}_{W,0}^{q,p}(\R^{2})$ norm of $Tu$ is an $\ell^{q}$ sum of the $L^{p}(\R^{2})$ norms of each of the translated wave packets, and as such the growth of the norm in $N$ of $Tu$ will be approximately $N^{1/q}$. Letting $N\to\infty$, this suffices for $q < p$.

%  then the norm of the image grows faster than the norm of $u$, demonstrating that the operator is not bounded. In the other case 
For $q > p$, % then 
we merely have to consider the opposite situation. That is, we choose a $u$ which is a sum of many wave packets in different directions and with the property that $Tu$ is a sum of wave packets all pointing in the same direction. For this one can use the inverse of $\psi$ from \eqref{eq:phi}. 

\begin{proof}[Proof of Theorem \ref{thm:FIOnotbdd}]
As already indicated, we will consider the case where $q<p$, with the other case being analogous.

We choose a frequency scale $R>0$, which will be large and, eventually, taken to infinity. Let $\omega_0:=(1,0)\in S^{1}\subseteq\Rtwo$, and let $u_0$ be the wave packet given by
\[
u_0(x) := \frac{1}{(2\pi)^{2}} \int_{\Rtwo} e^{ix \cdot \xi} \rho_{1}\big( R^\gamma |\hat \xi - \omega_0| \big) \Psi_0\big( \tfrac{|\xi|}{R} \big) \, \ud\xi
\]
for $x\in\Rtwo$. Here $\Psi_0$ is as in \eqref{eq:Psi0},  $\rho_{1}\in C^{\infty}_{c}(\R)$ is non-negative, supported in $[0, 2]$ and identically $1$ on $[0,1]$, and $\gamma \in (0, 1/2)$. %, and $\hat \xi = \xi/|\xi|$. 
Then $\wh{u_0}\in S^{0}_{1-\gamma,0}$ satisfies symbol estimates of order zero and type $1 - \gamma$, with constants in \eqref{eq:seminorms} independent of $R$, and
\[
\supp(\wh{u_{0}})\subseteq\{\xi\in\R^{2}\mid R\leq |\xi|\leq 2R, |\hat{\xi}-\w_{0}|\lesssim R^{-\gamma}\}.
\]
Note that the angular spread here is wider than for a dyadic-parabolic region. %, which would have an angular spread  $\sim R^{-1/2}$. 
This choice is made in order to have a stationary phase expansion in powers of $R$ in \eqref{eq:statphase} below. 
%As shown in \cite{HPR}, the function $u_0$ satisfies 
%\begin{equation}
%|u_0(x) | \leq C_N R^{7/4} \Big( 1 + R^2 x_1^2 + R x_2^2 \Big)^{-N}.
%\label{eq:upper-bd-u0}\end{equation}
%Notice that the wave packet has width about $R^{-1}$ in the direction $\omega_0 = (1,0)$ and $R^{-1/2}$ in the orthogonal direction, giving it an `effective area' of about $R^{-3/2}$. 
%Moreover, we have a lower bound 
%\begin{equation}
%|u_0(x)| \geq R^{7/4}, \quad x \in c[-R^{-1/2}, R^{-1/2}] \times c[R^{-1}, R^{-1}]
%\end{equation}
%for sufficiently small $c$, that is, on a smaller rectangle of similar dimensions: this arises from the fact that the integrand in \eqref{eq:u0} is non-oscillatory for $x$ in such a rectangle (in the sense that the argument of the integrand is contained in a sector $|\arg z| \leq \pi/4$). This is because the $\xi$-support of the integrand lies in a dual rectangle, of length $CR$ in the $\omega_0$ direction and $C R^{1/2}$ in the orthogonal direction. 

%We see from \eqref{eq:upper-bd-u0} and \eqref{eq:lower-bd-u0} that the $L^p$ norm of $u_0$ is comparable to $R^{7/4-3/(2p)}$. 

Now fix $\alpha\in(0,\gamma)$, and let $N\in\N$ be such that $\tfrac{1}{2}R^\alpha\leq N\leq 2R^{\alpha}$. We consider a function $u$ that consists of a sum of $N$ translates of $u_0$, along the line where $x_1 = 1$. More precisely, we set
\[
u := \sum_{j=1}^N u_{j,N},
\] 
where
\[
y_{j, N} := \big(1, \tfrac{j}{N} \big) \in \Rtwo\quad\text{and}\quad u_{j, N}(x) := u_0(x - y_{j, N})
\]
for $j\in\{1,\ldots,N\}$ and $x\in\R^{2}$.
%Let 
%\[
%u_{j, N}(x) = u_0(x - y_{j,N}) = \frac{1}{(2\pi)^{2}} \int_{\Rtwo} e^{i(x - y_{j,N}) \cdot \xi} \ph_{\omega_0}(\xi) \chi\big( \frac{|\xi|}{R} \big) \, \ud\xi
%\]
%for $x\in\Rtwo$. and set

\subsubsection{Estimating the $\funpqzero$ norm of $u$}

First note that %apply $\varphi_\omega(D)$ to $u$, where $\varphi_\omega$ is as in Section~\ref{subsec:packets}. As this operator is a Fourier multiplier, this gives us 
\[
\ph_{\w}(D)u(x)=\frac{1}{(2\pi)^{2}}\sum_{j=1}^N  \int_{\Rtwo} e^{i(x - y_{j,N}) \cdot \xi} \varphi_{\omega}(\xi) \rho_{1}\big( R^\gamma |\hat \xi - \omega_0| \big) \Psi_0\big( \tfrac{|\xi|}{R} \big)  \, \ud\xi 
\]
for $\w\in S^{1}$ and $x\in \R^{2}$. As a function of $\xi$, the product  $\varphi_{\omega}(\xi) \rho_{1}(R^{\gamma}|\hat \xi - \omega_0|)$  is an element of the symbol class $S^{1/4}_{1/2,0,1}$ from \eqref{eq:seminorms2}, with symbol seminorms independent of $R$, %symbol of order $1/4$ and type $(1/2, 1)$ (that is, type $1$ in the radial direction and $1/2$ in orthogonal directions) 
and with support contained %in $\{\xi\in\Rn\mid  
where $|\hat \xi - \omega| \lesssim R^{-1/2}$ and $|\omega - \omega_0| \lesssim R^{-\gamma}$. Since $\gamma < 1/2$, there exists a $C_{1}\geq0$ such that this product vanishes if $|\omega - \omega_0| \geq C_{1}R^{-\gamma}$. On the other hand, for $|\w-\omega_{0}|\leq C_{1} R^{-\gamma}$ %it is not hard to see that we obtain 
a similar estimate holds as in \eqref{eq:boundspsiinverse}, with $\sigma = R^{-1}$. That is, for each $K\geq0$ one has
\begin{equation}\label{eq:chiphiomega}
|\ph_{\w}(D)u_{j, N}(x)| \lesssim %\begin{cases} C_K 
R^{7/4} ( 1 + R |x - y_{j, N}|^2_\omega )^{-K} %\quad \forall N \in \mathbb{K}, \quad |\omega - \omega_0| \leq C R^{-\gamma} %\\
%0, {\hskip 188pt} |\omega - \omega_0| \geq C R^{-\gamma}. 
%\end{cases}
\end{equation}
for all $x\in\R^{2}$ and $\w\in S^{n-1}$ with $|\w-\w_{0}|\leq C_{1}R^{-\gamma}$. Just as in the proof of \eqref{eq:boundspsiinverse}, this can be shown through integration by parts, %, involving integration by parts, 
with the %the first line of 
estimate being insensitive to the presence of the factor $\rho_{1}$, because $|\omega - \omega_0| \leq C_{1} R^{-\gamma}$. %Since $\ph_{\w}(D)u_{j,N}=0$ otherwise, \eqref{eq:chiphiomega} in fact holds for all $\w\in S^{1}$. %(This can be seen from examining the standard proof of \eqref{eq:chiphiomega} using differential operators that leave the exponential factor invariant, and integrating by parts repeatedly: derivatives hitting $\chi$ are always smaller than the corresponding derivative hitting the $\varphi_\omega$ factor, due to the $\varphi_\omega$ factor having a sharper cutoff in angle.) %The presence of the $\chi$ factor in \eqref{eq:chiphiomega} however enforces a cutoff to the region $|\omega - \omega_0| \leq C R^{-\gamma}$. 

%We apply a standard integration by parts argument, using the fact that the function $e^{i(x - y_{j,N}) \cdot \xi}$ is invariant under the differential operators
%$$
%\frac{ 1 + \langle \xi \rangle^2 D_{\xi_1}^2}{1 + \langle \xi \rangle^2 (x_1 - 1)^2}, \quad \frac{ 1 + \langle \xi \rangle D_{\xi_2}^2}{1 + \langle \xi \rangle (x_2 - j/N)^2},
%$$
%to obtain a uniform upper bound: 
%\begin{equation}\label{eq:unif-upper-bd2}
%| \varphi_\omega(D) u_{j,N}(x) | \leq C_N R^{2} \big( 1 + R^2 (x_1 - 1)^2 + R (x_2 - j/N)^2 \big)^{-N}
%\end{equation}
%for $|\omega - \omega_0| \leq C R^{-1/2}$ and $x\in\Rtwo$, where $C_N\geq0$ is uniform for $\omega$ in this range. 
%Essentially this is the same estimate as in \eqref{eq:boundspsiinverse}, with $\sigma = R^{-1}$ and using \eqref{eq:equivnorm}, except that the symbol here is order $1/4$ greater than that in \eqref{eq:boundspsiinverse}. 
Now, the $\varphi_\omega(D) u_{j,N}$ all decay rapidly in $R^{1/2}|x - y_{j, N}|_{\w}$, due to \eqref{eq:chiphiomega}. Since $\alpha < 1/2$, the $L^p(\Rtwo)$ norm of $\varphi_\omega(D)u_{j,N}$ is thus equal to the $L^p$ norm of $\varphi_\omega(D)u_{j,N}$ restricted to the anisotropic ball $B_{j,N}:=B^{\w}_{cR^{-\alpha}}(y_{j,N})$ from \eqref{eq:anisball}, up to an  error of order $O(R^{-\infty})$. Here $c>0$ is small enough such that all these balls are disjoint, which in turn is possible because each $B_{j,N}$ is contained in the isotropic ball of radius $cR^{-\alpha}$ around $y_{j,N}$, by \eqref{eq:equivnorm2}, and because the spacing between the $y_{j,N}$ is $1/N\eqsim R^{-\alpha}$.

It follows that
\begin{align*}
\| \varphi_\omega(D) u \|_{L^p(\R^2)}^{p}&=\Big\| \sum_{j=1}^N  \varphi_\omega(D) u_{j, N} \Big\|_{L^p(\Rtwo)}^p = \sum_{j=1}^N \big\| \varphi_\omega(D) u_{j, N} \big\|_{L^p(B_{j,N})}^p + \, O(R^{-\infty})\\
&\leq \sum_{j=1}^N \big\| \varphi_\omega(D) u_{j, N} \big\|_{L^p(\Rtwo)}^p + \, O(R^{-\infty}). 
\end{align*}
By \eqref{eq:chiphiomega} and \eqref{eq:volumeball}, this gives a bound
\[
\| \varphi_\omega(D) u \|_{L^p(\Rtwo)} \lesssim \begin{cases} R^{7/4} R^{-3/(2p)} N^{1/p},& \text{if }|\omega - \omega_0| \leq C_{1} R^{-\gamma},\\
0,&\text{if }|\omega - \omega_0| \geq C_{1} R^{-\gamma}. 
\end{cases}
\]
Note that, for $p=1$, the same bound holds with $L^{1}(\Rtwo)$ replaced by $H^{1}(\Rtwo)$, given that $u$ has frequency support in a dyadic annulus. 

We then take the $L^q(S^{1})$ norm in $\omega$ to obtain an upper bound for the 
$\mathcal{L}_{W,0} ^{q,p}(\R^{2})$ norm of $u$, after choosing $R$ large enough so that the low-frequency term $\rho(D)u$ in \eqref{eq:normdef} vanishes:
\begin{equation}\label{eq:upperbd}
\| u \|_{\mathcal{L}_{W,0}^{q,p}(\Rtwo)}\lesssim R^{7/4}  R^{ - 3/(2p)} N^{1/p} R^{-\gamma/q},
\end{equation}
for an implicit constant independent of $N$ and $R$. 

\subsubsection{Stationary phase for $Tu$}

We now consider $v := Tu$, given by 
\[
v(x) = k(x) u(\psi(x)) = \frac{1}{(2\pi)^{2}} k(x) \sum_{j=1}^N \int_{\Rtwo} e^{i(\psi(x) - y_{j,N}) \cdot \xi} \rho_{1}\big( R^\gamma |\hat \xi - \omega_0| \big) \Psi_0\big( \tfrac{|\xi|}{R} \big) \, \ud\xi
\]
for $x\in\Rtwo$.

We want to apply the Fourier multiplier $\varphi_\omega(D)$ to $v$, so we take the Fourier transform of $v$. This is
\[
\F v(\xi) =   \frac{1}{(2\pi)^{2}}  \int_{\Rtwo} e^{-i x \cdot \xi} k(x) \sum_{j=1}^N \int_{\Rtwo} e^{i(\psi(x) - y_{j,N}) \cdot \eta} \rho_{1}\big( R^\gamma |\hat \eta - \omega_0| \big) \Psi_0\big( \tfrac{|\eta|}{R} \big) \ud\eta\ud x
\]
for $\xi\in\Rtwo$. Let $v_{j, N}$ be the $j$-th term in this sum. We scale the frequencies $\xi$ and $\eta$ by a factor of $R$ and write
\begin{equation}\label{eq:statphase}
\F v_{j,N}(R\xi)\!= \!\frac{R^2}{(2\pi)^{2}}\! \int_{\R^{4}}\!e^{iR(- x \cdot \xi + (\psi(x) - y_{j,N}) \cdot \eta)} k(x)  \rho_{1}\big( R^\gamma |\hat \eta - \omega_0| \big) \Psi_0( |\eta| ) \ud\eta\ud x.
\end{equation}
We shall apply the stationary phase expansion, see e.g. Theorem 7.7.5 of \cite{Hormander03}. 

 First we observe that, for each fixed $\xi$, the phase function $f(x, \eta) := - x \cdot \xi + (\psi(x) - y_{j,N}) \cdot \eta$ has a unique critical point at $x = x_{j,N} := \psi^{-1} (y_{j,N})$ and $\xi = d_x \psi(x_{j,N}) \cdot \eta$, which uniquely determines $\eta$ since $\psi$ is a diffeomorphism and therefore $d_x \psi$ is an invertible linear map. Next, the Hessian of $f$ at the critical point is given in block form by 
\[
\begin{pmatrix} d^2_{xx} \psi(x_{j,N}) \cdot \eta & d_x \psi(x_{j,N}) \\ d_x \psi(x_{j,N}) & 0 \end{pmatrix}.
\]
This is clearly nondegenerate, since $d_x \psi(x_{j,N})$ is a nonsingular matrix. 

In applying the stationary phase expansion to \eqref{eq:statphase}, we need to check that we have a valid expansion as $R \to \infty$, i.e.\ that the powers of $R$ tend to $-\infty$, since the amplitude in \eqref{eq:statphase} also depends on $R$. The error estimate in \cite[Theorem 7.7.5]{Hormander03}  after $m$ terms is a multiple of
$$
R^{-m} \sum_{|\alpha| \leq 2m} \sup_{x,\eta\in\Rtwo} \Big| D^\alpha_{x, \eta} \Big( k(x) \rho_{1}\big( R^\gamma |\hat \eta - \omega_0| \big)\Psi_0( |\eta| ) \Big) \Big|,
$$
and this in turn is bounded above by a multiple of $R^{-m(1 - 2\gamma)}$, given that a derivative of the amplitude costs at worst $R^\gamma$. Now, since $\gamma$ was chosen to be strictly less than $1/2$, the power of $R$ in the error estimate tends to $-\infty$, so this is a valid expansion as $R \to \infty$. For exactly the same reason, each term in the expansion, except for the zeroth one, has an overall power of $R$ that is strictly decreasing. 

We aim to prove a lower bound on $\| v \|_{\mathcal{L}_{W,0}^{q,p}(\Rtwo)}$. We claim that it suffices to prove a lower bound after replacing $v$ by its leading term in the stationary phase expansion of \eqref{eq:statphase}. 
In fact, due to symbolic derivative estimates on $\rho_{1}$ and $\Psi_0$, successive terms in this expansion have a similar form, but with decreasing powers of $R$. Each such term can be bounded \emph{above} in the $\mathcal{L}_{W,0}^{q,p}(\Rtwo)$ norm in a similar way as for $u$ above, and contributes less, by a factor of a negative power of $R$, than the leading term. 
On the other hand, the estimate on the remainder after taking $m$ terms in the stationary phase expansion of \eqref{eq:statphase} is bounded by a multiple of $ R^{-m(1 - 2\gamma)}$ for $\xi$ in a compact set, using \cite[equation (7.7.12)]{Hormander03}, or by a multiple of $R^{2-m(1 - \gamma)} \ang{\xi}^{-m}$ when $\ang{\xi}$ is large, using the ``non-stationary phase" estimate of \cite[equation (7.7.1${}^\prime$)]{Hormander03} (noting that $|d_x f| \eqsim \ang{\xi}$ for large $\ang{\xi}$). These pointwise estimates imply a bound on the  $\mathcal{L}_{W,0}^{q,p}(\Rtwo)$ norm of the remainder that decreases faster than any given power of $R$, if $m$ is taken sufficiently large. 
%Consequently a lower bound on the leading term is also, asymptotically (and at the price of, say, a factor of $1/2$), a lower bound of the norm of $v$ itself. 
It follows that indeed, if we want to obtain a \emph{lower} bound on $\| v \|_{\mathcal{L}_{W,0}^{q,p}(\Rtwo)}$, valid for sufficiently large $R$, then  (up to a factor of, say, $1/2$) we only need to consider the leading term of the expansion.

\subsubsection{Estimating the $\mathcal{L}_{W,0} ^{q,p}(\R^{2})$ norm of $v$}

We will denote the leading term in the expansion for $v$ by $v_{\ldg}$, and write $v_{\ldg} = \sum_{j=1}^N v_{j, N, \ldg}$ correspondingly. With this notation, and writing $A_{j, N} := (d\psi^{t}(x_{j,N}))^{-1}$ for $j=1,\ldots,N$, we have
%The leading term in the stationary phase expansion of \eqref{eq:statphase} is 
\[
\F v_{j, N, \ldg}(\xi) = e^{-i x_{j, N} \cdot \xi}\frac{ k(x_{j,N})  \rho_{1}\big( R^\gamma |\widehat{ A_{j, N} \xi} - \omega_0| \big)    \Psi_0\big( |A_{j, N} \xi | / R\big)}{(\det(f''(x_{j,N},\eta_{j,N})/2\pi i))^{1/2}}
\]
for $\xi\in\Rtwo$. The Fourier transform of $\ph_\omega(D) v_{j, N, \ldg}$  is therefore
\begin{equation}
e^{-i x_{j, N} \cdot \xi} \frac{\ph_\omega(\xi) k(x_{j,N})  \rho_{1}\big( R^\gamma |\widehat{ A_{j, N}  \xi} - \omega_0| \big)    \Psi_0\big( |A_{j, N}  \xi | / R\big)}{(\det(f''(x_{j,N},\eta_{j,N})/i))^{1/2}} .
\label{eq:leadingterm2}\end{equation}
If we write
\[
\w_{j,N}:=\frac{d\psi^{t}(x_{j,N}) \omega_0}{|d\psi^{t}(x_{j,N}) \omega_0|},
\]
then the support of this function is contained in 
\begin{equation}\label{eq:suppvjN}
\big\{ \big| \hat \xi - \omega \big| \lesssim R^{-1/2} \big\} \cap  \big\{ \big| \hat \xi - \w_{j,N} \big| \lesssim R^{-\gamma}  \big\},
\end{equation}
for $R$ sufficiently large. 
%Therefore, for large $R$, we have (since $k(x_{j,N}) = 1$)
%\begin{equation}
%\sigmabar_{\omega, \omega_0}(y_{j,N}, \eta) = \varphi_{\omega}(d\psi^t_{x_{j,N}} (\eta)) \varphi_{\omega_0}(\eta) \chi \big( \frac{|\eta|}{R}  \big)   + O(1),
%\label{eq:sigmabar}\end{equation}
%with 
%$$
%\psi(x_{j,N}) = y_{j,N} \Longrightarrow x_{j,N} = \Big( \frac{j}{N}, 1 + \frac{j^2}{N^2} \Big).
%$$

We now consider this function of $\xi$, for $j = 1, \dots, N$, assuming that $\omega$ lies in an $R^{-\gamma}$ neighbourhood of one of the directions $\omega_{j,N}$. It is easy to compute that 
$$
d\psi^{t}(x_{j,N}) \omega_0 = d\psi^{t}(x_{j,N}) \begin{pmatrix} 1 \\ 0 \end{pmatrix}  = \frac1{1 + (j/N)^2} \begin{pmatrix} -2j/N \\ 1 \end{pmatrix}
$$
and therefore
$$
\omega_{j,N} = \frac{d\psi(x_{j,N})^{t} \omega_0}{|d\psi(x_{j,N})^{t} \omega_0|} = \frac1{\sqrt{1 + (2j/N)^2}} \begin{pmatrix} -2j/N \\ 1 \end{pmatrix}.
$$
As $j$ varies, these directions $\omega_{j,N}$ are separated by an amount at least $c N^{-1} \eqsim R^{-\alpha} \gg  R^{-\gamma}$, as $\alpha < \gamma$. 
Let $S_{j,N}$ be a neighbourhood of $\omega_{j,N}$ in $S^1$ of size approximately $R^{-\gamma}$. Then for $\omega\in S_{j,N}$, using \eqref{eq:suppvjN} we have $\varphi_\omega(D)  v_{\ldg} = \varphi_\omega(D)  v_{j, N, \ldg}$. 
We seek a lower bound on the $L^p(\Rtwo)$ norm of this function. 
Due to \eqref{eq:leadingterm2}, we can express $\varphi_\omega(D)  v_{j, N, \ldg}(x)$ as $(\det(f''(x_{j,N},\eta_{j,N}))/i)^{-1/2}$ times
$$
 \int_{\Rtwo} e^{i(x-x_{j, N}) \cdot \xi} \ph_\omega(\xi) k(x_{j,N})  \rho_{1}\big( R^\gamma |\widehat{ A_{j, N} \xi} - \omega_0| \big)    \Psi_0\big( |A_{j, N} \xi | / R\big)\ud\xi,
$$
for all $x\in\Rtwo$. 
We notice that the integrand is real and nonnegative, and it is bounded from below by a multiple of $R^{1/4}$  (since $\varphi_\omega$ is a symbol of order $1/4$, cf.~property \eqref{it:phiproperties2} in Section \ref{subsec:packets}) in a rectangle of size approximately $R \times R^{1/2}$ with long axis in direction $\omega$. 
It follows that there is a rectangle in $x$-space centered at $x_{j, N}$, of size approximately $ R^{-1} \times R^{-1/2}$, where 
$|\ph_{\w}(D)v_{j, N, \ldg}(x)|$ is bounded from below by a multiple of $R^{7/4}$. 

In turn, it follows from this that, for each $j = 1, \dots, N$ and $\w\in S_{j,N}$, one has 
$$
\|\ph_\omega(D) v_{\ldg}\|_{L^{p}(\Rtwo)}\gtrsim R^{7/4} R^{-3/(2p)}.
$$
For $p=1$, because $H^{1}(\Rtwo)\subseteq L^{1}(\Rtwo)$, the same holds with $L^{1}(\Rtwo)$ replaced by $H^{1}(\Rtwo)$. We now integrate over $\omega$ to obtain
\begin{equation}\label{eq:lowerbd}
\| v_{\ldg} \|_{\mathcal{L}_{W,0}^{q,p}(\Rtwo)}\gtrsim R^{7/4} R^{-3/(2p)} (N R^{-\gamma})^{1/q},
\end{equation}
where we also used that the measure of $\cup_j S_{j, N}$ is approximately $N R^{-\gamma}$. By the arguments above, the right-hand side of \eqref{eq:lowerbd} is also a lower bound for the $\mathcal{L}_{W,0}^{q,p}(\Rtwo)$ norm of $v$ itself.

\subsubsection{Conclusion}

Comparing \eqref{eq:lowerbd} to \eqref{eq:upperbd}, we see that the powers of $R$ match, but the power of $N$ in \eqref{eq:upperbd} is $1/p$, while the power of $N$ in \eqref{eq:lowerbd} is $1/q$. As $R \to \infty$, we have $N^{1/q} \gg N^{1/p}$ (recall that $N \eqsim R^{\alpha}$).  Given that we assumed that $q<p$, the map $T$ is therefore not bounded on $\fun^{s,p,q}(\Rtwo)$. %A very similar argument shows that diffeomorphisms are generally not bounded for $q > p$; we just need to act on a sum of wave packets pointing in different directions, such that pullback by the diffeomorphism maps them to wave packets pointing in the same direction. We omit the details. 
\end{proof}

%AH: I edited and expanded the two remarks, replacing them with one extended remark. 
\begin{remark}\label{rem:wavenotbdd}
The proof above can be tweaked to show that the same phenomenon will occur for any Fourier integral operator %, $T$, 
with a canonical relation which maps points $(x_j, \omega_0)$ in phase space, with varying base point $x_j$ and fixed direction $\omega_0$, to points with varying direction. For a concrete example, consider the Fourier integral operator $T = e^{it\sqrt{L}}$, where $L$ is the self-adjoint operator
$$
L := D_{x_1}^2 + (1+x_1^2) D_{x_2}^2
$$
in two space dimensions. The bicharacteristic flow associated with $\sqrt{L}$ is 
\begin{alignat*}{3}
\dot x_1 &= \frac{\xi_1}{\sqrt{\xi_1^2 + (1 + x_1^2)\xi_2^2}},\qquad \dot x_2 &&= \frac{(1+x_1^2) \xi_2}{\sqrt{\xi_1^2 + (1 + x_1^2)\xi_2^2}},\\
\dot \xi_1 &= \frac{-x_1\xi_2^2}{\sqrt{\xi_1^2 + (1 + x_1^2)\xi_2^2}}, \qquad\dot\xi_{2}&&= 0.
\end{alignat*}
Letting $\omega = \xi_1/\xi_2$, we find that $x_1$ and $\omega$ satisfy the autonomous equations
\[
\dot x_1 = \frac{\omega}{\sqrt{1 + x_1^2 + \omega^2}},\qquad \dot \omega = \frac{-x_1}{\sqrt{1 + x_1^2 + \omega^2}},
\]
showing that $x_1$ and $\omega$ move along circles $\{ x_1^2 + \omega^2 = r \}$ at a speed depending only on the radius $r$ of the circle. It is thus easy to write down the exact solution and verify that for a fixed small positive time $t$, if $\omega(0) = 0$ and $x_1(0) = c$ then $\omega(t) = \omega(t, c)$ varies with $c$. Therefore, we can implement a similar example as above, to conclude that the Fourier integral operator $e^{it\sqrt{L}}$ is not bounded on $\mathcal{L}_{W,0} ^{q,p}(\Rtwo)$  for $t\neq 0$ and $q \neq p$. 

On the other hand, the positive result of Theorem~\ref{thm:FIObdd} extends to FIOs $T$ that are Fourier multipliers. More precisely, if the phase function $\Phi$ is of the form 
\begin{equation}\label{eq:FIO multiplier}
\Phi(x, \eta) = \la x \cdot \eta + \phi(\eta)
\end{equation}
for some $\la\in\R\setminus\{0\}$, a $\phi:\Rn\setminus\{0\}\to\R$ positively homogeneous of degree $1$, and all $(x,\eta)\in \R^{2n}\setminus o$,  then $T$ maps wave packets in a fixed direction $\omega_0$ to wave packets in a (new) fixed direction $\omega_1$, and the conclusion of Theorem \ref{thm:FIObdd} still holds. This is analogous to \cite[Theorem 5.2]{CoNiRo10} which contains a similar property for the modulation spaces $M^{p,q}$. The close relation between the two results can be seen by observing that the conditions on the phase function in \cite[Theorem 5.2]{CoNiRo10}, namely 
$$
\sup_{x,x',\eta} \big| \nabla_x \Phi(x, \eta) - \nabla_x \Phi(x', \eta) \big| <\infty,
$$
%$\inf_{x,\eta}|\det \partial_{x,\eta}^{2}\Phi(x,\eta)|>0$, and $\sup_{x,\eta} | \partial^{\alpha}_{(x,\eta)}\Phi(x, \eta)| <\infty$ for all $\alpha\in\Z^{2n}_{+}$ with $|\alpha|\geq 2$, 
together with our assumption that the phase function is homogeneous of degree $1$ in $\eta$, imply that $\Phi$ is an affine function (linear plus constant) in $x$, similar to \eqref{eq:FIO multiplier}. (On the other hand, the conditions imposed on the phase function in \cite{CoNiRo10} are different from our conditions, and the authors of \cite{CoNiRo10} do not require homogeneity of degree $1$ in $\eta$; indeed, as can be perceived from the other assumptions on the phase in \cite[Theorem 5.2]{CoNiRo10}, their primary interest is the case of quadratic growth in $\eta$). 

The same article, \cite{CoNiRo10}, also shows that a phenomenon analogous to Theorem \ref{thm:FIOnotbdd} takes place on the modulation spaces $M^{p,q}$, in the sense that \cite[Proposition 7.1]{CoNiRo10} gives an example of an oscillatory operator which is bounded on $M^{p,p}$ but not on $M^{p,q}$ for any $p\neq q$. Their example is a multiplication operator with a symbol which has a quadratic (in $x$) oscillatory phase, and which is therefore not a Fourier integral operator in the sense of \cite{Hormander71b,Hormander09} and of this article. %This is less relevant in \cite{CoNiRo10} because, unlike $\funpqs$, the modulation spaces are not only invariant under Fourier integral operators, but also under the Schr\"{o}dinger propagators and related operators with quadratic oscillating kernels. 
The differences in the conditions on the phase function $\Phi$ between our article and \cite{CoNiRo10} reflect the fact that their spaces and operators are designed for Schr\"odinger-like operators, whereas ours are designed for (half)-wave-like operators. 
In particular, the modulation spaces, apart from being more like Besov spaces than Hardy spaces, involve frequency localization to regions of unit size (reflecting the fact that the multiplier $|\eta|^2$ can only be linearized accurately on unit-size regions of frequency space), whereas the spaces $\funpqs$ localize at frequency level $R$ to dyadic-parabolic regions of dimension $R^{1/2}\times\ldots\times R^{1/2}\times R$ (reflecting the fact that the multiplier $|\eta|$ can be linearized accurately on such regions).  This makes the analysis of the spaces, and the proofs of Theorem \ref{thm:FIOnotbdd} and \cite[Proposition 7.1]{CoNiRo10}, quite different. 
\end{remark}

\section{Embeddings}\label{sec:embeddings}

In this section we consider various embedding properties of $\funpqs$. To do so, we first prove some norm inequalities involving the parabolic frequency localizations from Section \ref{subsec:packets}. We then consider inclusions involving our function spaces and the classical function spaces $\HT^{s,p}(\Rn)$, and finally we prove fractional integration theorems for $\funpqs$.

\subsection{Preliminary results}\label{subsec:preliminaryresults}

Some of the embedding properties involving our function spaces are obvious. For example, by H\"{o}lder's inequality, it is clear that as the parameters $q$ and $s$ grow, the space $\funpqs$ gets smaller. In other words, for all $p,q,v\in[1,\infty)$ and $s,r\in\R$, one has
\begin{equation}\label{eq:Sobolev11}
\funpqs\subseteq 
\mathcal{L}_{W,r} ^{v,p}(\Rn)
\end{equation}
if $s\geq r$ and $q\geq v$.

On the other hand, some of the other embeddings are more subtle, and to prove those we will rely in part on the following norm inequalities involving the functions $(\ph_{\w})_{\w\in S^{n-1}}$ from Section \ref{subsec:packets}.

\begin{proposition}\label{prop:mappingphiw}
Let $1\leq p\leq r<\infty$, and set $s:=\frac{n+1}{2}(\frac{1}{p}-\frac{1}{u})$. Then there exists a $C\geq0$ such that the following statements hold for all $\w\in S^{n-1}$ and $f\in\Sw'(\Rn)$.
\begin{enumerate}
\item\label{it:mappingphiw1} If $\ph_{\w}(D)f\in \HT^{p}(\Rn)$, then 
$\lb D\rb^{-s}\ph_{\w}(D)f\in L^{u}(\Rn)$ and
\[
\|\lb D\rb^{-s}\ph_{\w}(D)f\|_{L^{u}(\Rn)}\leq C\|\ph_{\w}(D)f\|_{\HT^{p}(\Rn)}.
\]
\item\label{it:mappingphiw2} If $f\in H^{p}_{\w}(\Rn)$, then $\lb D\rb^{-\frac{n-1}{4}}\ph_{\w}(D)f\in H^{p}(\Rn)$ and 
\[
\|\lb D\rb^{-\frac{n-1}{4}}\ph_{\w}(D)f\|_{\HT^{p}(\Rn)}\leq C\|f\|_{H^{p}_{\w}(\Rn)}.
\]
\end{enumerate} 
\end{proposition}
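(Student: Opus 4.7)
The plan for both parts is to reduce the estimates to the parabolic Hardy space framework from Section \ref{subsec:Hardy}, via Proposition \ref{prop:equivpar}.

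For part \eqref{it:mappingphiw2}, since $\langle D\rangle^{-(n-1)/4}\ph_\w(D)f$ has Fourier support contained in the paraboloid \eqref{eq:parabolicsupport} (up to fixed constants), Proposition \ref{prop:equivpar} gives
\[
\|\langle D\rangle^{-(n-1)/4}\ph_\w(D)f\|_{\HT^p(\R^n)}\eqsim \|\langle D\rangle^{-(n-1)/4}\ph_\w(D)f\|_{H^p_\w(\R^n)}
\]
with constants independent of $\w$. It then suffices to show that the Fourier multiplier $m_\w(D):=\langle D\rangle^{-(n-1)/4}\ph_\w(D)$ is bounded on $H^p_\w(\R^n)$, uniformly in $\w$. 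For this I will verify that $m_\w$ is an anisotropic symbol of order zero, adapted to the dilations $(A_{\w,\sigma})_{\sigma>0}$: using \eqref{it:phiproperties2} together with Leibniz's rule and standard estimates on derivatives of $\langle\xi\rangle^{-(n-1)/4}$, one obtains
\[
|(\w\cdot\partial_\xi)^\beta\partial_\xi^\alpha m_\w(\xi)|\lesssim \langle\xi\rangle^{-|\alpha|/2-\beta}\quad(\xi\in\R^n),
\]
uniformly in $\w\in S^{n-1}$. The $H^p_\w$-boundedness of such a parabolic Mihlin--H\"ormander multiplier then follows from the theory in \cite{Calderon-Torchinsky75,Calderon-Torchinsky77}; for $p\in(1,\infty)$ one can alternatively carry out a direct proof using Lemma \ref{lem:analytic} and Lemma \ref{lem:maximal}, in a manner reminiscent of the proof of Proposition \ref{prop:equivpar}.

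For part \eqref{it:mappingphiw1}, the strategy is analogous, combining Proposition \ref{prop:equivpar} with a parabolic Sobolev embedding. Write $g:=\ph_\w(D)f$, and first reduce $\|g\|_{\HT^p(\R^n)}\eqsim \|g\|_{H^p_\w(\R^n)}$ via Proposition \ref{prop:equivpar}. Next, I invoke the parabolic Sobolev embedding
\[
\||D|_\w^{-\tilde s}g\|_{L^u(\R^n)}\lesssim \|g\|_{H^p_\w(\R^n)},\qquad \tilde s:=(n+1)\Big(\tfrac{1}{p}-\tfrac{1}{u}\Big),
\]
which is the natural parabolic analogue of the classical fractional integration theorem, reflecting the effective dimension $n+1$ of the anisotropic balls \eqref{eq:volumeball}, and is established in \cite{Calderon-Torchinsky75}. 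Finally, on the Fourier support of $g$, the relation \eqref{eq:normanisandis} gives $|\xi|_\w\eqsim \langle\xi\rangle^{1/2}$; hence $\langle D\rangle^{-s}g$ and $|D|_\w^{-\tilde s}g$ differ by a parabolic multiplier of order zero acting on $g$, which is bounded on $L^u(\R^n)$ by essentially the same symbol calculus as in part \eqref{it:mappingphiw2}. This converts the parabolic Sobolev embedding into the claimed classical form with $s=\tilde s/2=\tfrac{n+1}{2}(\tfrac{1}{p}-\tfrac{1}{u})$.

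The main technical hurdle will be the careful bookkeeping of uniformity in $\w$ when verifying the parabolic symbol estimates, and the reduction of the isotropic operator $\langle D\rangle^{-s}$ to the parabolic Riesz potential $|D|_\w^{-\tilde s}$ on paraboloid-supported functions. The endpoint case $p=1$ requires extra care, but can be handled using the atomic characterization of $H^1_\w(\R^n)$ from \cite{Calderon-Torchinsky75}, together with the kernel bound \eqref{eq:anisdecay}.
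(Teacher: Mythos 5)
Your proof is correct and takes essentially the same route as the paper: part \eqref{it:mappingphiw2} is obtained there by exactly your argument (Proposition \ref{prop:equivpar} plus the uniform $H^{p}_{\w}(\Rn)$-boundedness of the anisotropic order-zero multiplier $\lb \xi\rb^{-\frac{n-1}{4}}\ph_{\w}(\xi)$), and for part \eqref{it:mappingphiw1} the paper factors $\lb D\rb^{-s}\ph_{\w}(D)=m_{2}(D)m_{1}(D)\ph_{\w}(D)$ with $m_{1}(\xi)=(1+|\xi|_{\w}^{2})^{-s}$ the parabolic Bessel potential of anisotropic order $2s=(n+1)(\tfrac{1}{p}-\tfrac{1}{u})$ mapping $H^{p}_{\w}(\Rn)\to L^{u}(\Rn)$, and $m_{2}$ precisely the order-zero correction multiplier you describe, using that $|\xi|_{\w}\eqsim\lb\xi\rb^{1/2}$ on $\supp(\ph_{\w})$. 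No gaps.
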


\begin{remark} Notice the different numerology compared to the standard fractional integral result from $L^p(\R^n)$ to $L^u(\Rn)$, which would require $s \geq n(1/p - 1/u)$. This can be explained heuristically as follows: because of the parabolic cutoff, the result is ``really'' about anisotropic spaces, in which case $\ang{D}^{-s}$ is anisotropically smoothing of order $2s$, since $|\xi| \eqsim |\xi|_\omega^2$ on the support of $\ph_\omega$ (see \eqref{eq:phisuppomega}). On the other hand, the parabolic norm has dimension $n+1$ instead of $n$ in terms of the volume of balls (see \eqref{eq:volumeball}). A combination of these factors results in the condition on $s$ being as stated in the proposition.  

We also remind the reader that $\ph_\omega$ is a symbol of order $(n-1)/4$, so the multiplier $\lb D\rb^{-\frac{n-1}{4}} \ph_{\w}(D)$ in \eqref{it:mappingphiw2} is a symbol of order zero. 
\end{remark}

\begin{proof}
For \eqref{it:mappingphiw1}, we write
\[
\lb D \rb^{-s} \varphi_{\omega}(D) =  I_{\w,2s} m_{\w}(D) \varphi_{\omega}(D).
\] 
Here $I_{\w,2s}$ is the anisotropic fractional integration operator from \eqref{eq:fracan}, and
\[
m_{\w}(\xi) := |\xi|_{\w}^{2s}(1+|\xi|^2)^{-s/2}\chi_{\w}(\xi) 
\]
for $\xi\in\Rn$ and some $\chi_{\w}\in C^{\infty}(\Rn)$ with similar support properties and behavior under differentiation as $\ph_{\w}$, but such that $\chi_{\w}\equiv 1$ on $\supp(\ph_{\w})$. Then 
\[
\|\lb D\rb^{-s}\ph_{\w}(D)f\|_{L^{u}(\Rn)}\lesssim \|m_{\w}(D)\ph_{\w}(D)f\|_{H^{p}_{\w}(\Rn)},
\]
by Proposition \ref{prop:fracintHardy}.

On the other hand, for all $\alpha\in\Z_{+}^{n}$ and $\beta\in\Z_{+}$ one has
\[
\sup_{\xi\in\Rn} \lb |\xi|_{\w}\rb^{|\alpha|+2\beta}|(\w\cdot \partial_{\xi})^{\beta}\partial_{\xi}^{\alpha} m_{\w}(\xi)|  <\infty,
\]
by \cite[Lemma 1.5]{Calderon-Torchinsky75} and \eqref{eq:phisuppomega} with $\ph_{\w}$ replaced by $\chi_{\w}$. Hence Propositions \ref{prop:multHardy} and \ref{prop:equivpar} yield
\[
\|m_{\w}(D)\ph_{\w}(D)f\|_{H^{p}_{\w}(\Rn)}\lesssim \|\ph_{\w}(D)f\|_{H^{p}_{\w}(\Rn)}\eqsim \|\ph_{\w}(D)f\|_{H^{p}(\Rn)},
\]
as required for \eqref{it:mappingphiw1}.

To prove (\ref{it:mappingphiw2}), one uses Propositions \ref{prop:equivpar} and \ref{prop:multHardy} to observe that
\begin{align*}
\|\lb D \rb^{-\frac{n-1}{4}} \varphi_{\omega}(D) f \|_{\HT^p(\Rn)} 
&\eqsim \|\lb D \rb^{-\frac{n-1}{4}} \varphi_{\omega}(D) f \|_{H^p_{\omega}(\Rn)} \\
&\lesssim \|f \|_{H^p_{\omega}(\Rn)}.
\end{align*}
Here the application of Proposition \ref{prop:multHardy} is allowed since
\[
\sup_{\xi\in\Rn} \lb |\xi|_{\w}\rb^{|\alpha|+2\beta}\big|(\w\cdot \partial_{\xi})^{\beta}\partial_{\xi}^{\alpha} \big(\lb \xi\rb^{-\frac{n-1}{4}} \varphi_{\omega}(\xi)\big)\big| <\infty
\]
for all $\alpha\in\Z_{+}^{n}$ and $\beta\in\Z_{+}$, where we again used \eqref{eq:phisuppomega}.
\end{proof}

\subsection{Sobolev embeddings}\label{subsec:Sobolev}

Recall from Remark \ref{rem:HpFIO} that $\fun^{p,p}(\Rn) = \Hps$ for all $p\in[1,\infty)$ and $s\in\R$. Hence the Sobolev embeddings for $\Hp$ from \eqref{eq:Sobolevintro} immediately yield the following embeddings for $\fun^{p,p}(\Rn)$, for all $p\in[1,\infty)$ and $s\in\R$:
\begin{equation}\label{eq:Sobolevfunpps}
\HT^{s+s(p),p}(\Rn)\subseteq \fun^{p,p}(\Rn)\subseteq \HT^{s-s(p),p}(\Rn).
\end{equation}
The following theorem extends these embeddings to $\funpqs$ for more general values of $q$. It contains the first statement of Theorem \ref{thm:Sobolevintro} as a special case. 

\begin{theorem}\label{thm:Sobolev2}
Let $p\in[1,\infty)$, $q\in(1,\infty)$ and $s\in\R$. Then the following assertions hold.
\begin{enumerate}
\item \label{it:Sobolev21} If $q \leq \max(p,2)$, then
\[
\HT^{s+s(p),p}(\Rn) \subseteq \funpqs.
\]
\item \label{it:Sobolev22}  If $q \geq \min(p,2)$, then
\[
\funpqs \subseteq \HT^{s-s(p),p}(\Rn).
\]
\end{enumerate}
\end{theorem}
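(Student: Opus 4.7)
The plan is to reduce both embeddings to the base case \eqref{eq:Sobolevfunpps}, i.e.~$\HT^{s+s(p),p}(\Rn) \subseteq \fun^{p,p;s}(\Rn) \subseteq \HT^{s-s(p),p}(\Rn)$, by combining the monotonicity \eqref{it:Sobolev11}, Minkowski's integral inequality applied to the wave packet characterization of Lemma~\ref{lem:normtransform}, and duality from Proposition~\ref{prop:duality}. Since $\lb D\rb^{s}$ commutes with $\rho(D)$ and each $\ph_{\w}(D)$, I reduce to the case $s = 0$.

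For part (\ref{it:Sobolev21}), I split the argument according to $\max(p,2)$. When $p \geq 2$, the hypothesis becomes $q \leq p$, so \eqref{it:Sobolev11} yields $\fun^{p,p;0}(\Rn) \subseteq \funpqzero$, and combining with $\HT^{s(p),p}(\Rn) \subseteq \fun^{p,p;0}(\Rn) = \Hp$ from \eqref{eq:Sobolevfunpps} finishes this subcase. When $p \leq 2$, the hypothesis becomes $q \leq 2$, and monotonicity reduces the problem to the case $q = 2$, i.e.~to proving $\HT^{s(p),p}(\Rn) \subseteq \fun^{p,2;0}(\Rn)$. By Lemma~\ref{lem:normtransform}, the $\fun^{p,2;0}$ norm is equivalent to $\|Wf\|_{L^{2}_{\w}L^{p}_{x}L^{2}_{\sigma}(\Spp)}$, and Minkowski's integral inequality (valid since $2 \geq p$) gives
\[
\|Wf\|_{L^{2}_{\w}L^{p}_{x}L^{2}_{\sigma}(\Spp)} \leq \|Wf\|_{L^{p}_{x}L^{2}_{\w}L^{2}_{\sigma}(\Spp)} \eqsim \|f\|_{\Hp},
\]
where the final equivalence is the conical square function characterization of $\Hp$ from \cite{HaPoRo20,Rozendaal21,FaLiRoSo23} (cf.~Remark~\ref{rem:HpFIO}). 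Combined with $\HT^{s(p),p}(\Rn) \subseteq \Hp$ from \eqref{eq:Sobolevfunpps}, this completes the argument.

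Part (\ref{it:Sobolev22}) then follows by duality. By Proposition~\ref{prop:duality}, the embedding $\funpqzero \subseteq \HT^{-s(p),p}(\Rn)$ is equivalent to its adjoint statement, which (using $s(p) = s(p')$ and $(\HT^{-s(p),p}(\Rn))^{*} = \HT^{s(p),p'}(\Rn)$) reads $\HT^{s(p'),p'}(\Rn) \subseteq \fun^{p',q';0}(\Rn)$. A short case-check shows that $q \geq \min(p,2)$ is equivalent under conjugation to $q' \leq \max(p',2)$: when $p \geq 2$ both sides give $q' \leq 2 = \max(p',2)$, and when $p \leq 2$ both sides give $q' \leq p' = \max(p',2)$. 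Hence part (\ref{it:Sobolev21}) applied to the exponents $(p',q')$ supplies the required embedding.

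The only nontrivial content lies in the Minkowski step in part (\ref{it:Sobolev21}), which crucially relies on the existence of the conical square function characterization of $\Hp$ in order to identify $\|Wf\|_{L^{p}_{x}L^{2}_{\w}L^{2}_{\sigma}(\Spp)}$ with $\|f\|_{\Hp}$; the remaining ingredients (monotonicity, the base case, and duality) are soft. Finally, the endpoint cases $p \in \{1,\infty\}$ can be handled by the same strategy, using the local Hardy space and $\bmo$ versions of the characterizations cited in Remark~\ref{rem:HpFIO}.
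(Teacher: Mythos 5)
Your overall architecture coincides with the paper's: reduce to $s=0$ and to $q=\max(p,2)$ via \eqref{it:Sobolev11}, dispose of $p\geq 2$ using $\HT^{s(p),p}(\Rn)\subseteq\fun^{p,p;0}(\Rn)$ from \eqref{eq:Sobolevfunpps}, and deduce \eqref{it:Sobolev22} from \eqref{it:Sobolev21} by duality (your case-check of the exponents there is correct). The gap is in the one step that carries all the content, namely $1\leq p\leq 2$, $q=2$. The equivalence $\|Wf\|_{L^{p}_{x}L^{2}_{\w}L^{2}_{\sigma}(\Spp)}\eqsim\|f\|_{\Hp}$ that you invoke after Minkowski is not one of the known characterizations of $\Hp$, and it is false. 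The characterizations in the cited works are (i) the tent-space/conical square function norm over $\Sp$, which averages $|Wf(y,\nu,\sigma)|^{2}$ over anisotropic balls in the $(y,\nu)$ variables before taking the $L^{p}$ norm, and (ii) the norm $\|Wf\|_{L^{p}_{\w}L^{p}_{x}L^{2}_{\sigma}(\Spp)}$, i.e.\ \eqref{eq:normdef} with $q=p$; neither is the pointwise vertical square function $L^{p}_{x}L^{2}_{\w}L^{2}_{\sigma}$. Concretely, for $f$ consisting of a single dyadic-parabolic wave packet at frequency $R$ (one term $\chi_{\nu_{0}}(D)f$ with $\supp(\wh{f}\,)$ in an annulus $|\xi|\eqsim R$), Proposition \ref{prop:discrete} gives $\|f\|_{\Hp}\eqsim R^{-s(p)}\|f\|_{L^{p}(\Rn)}$ for $p<2$, whereas $\|Wf\|_{L^{p}_{x}L^{2}_{\w}L^{2}_{\sigma}(\Spp)}\eqsim\|f\|_{L^{p}(\Rn)}$; the two differ by the unbounded factor $R^{s(p)}$.

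Even if you replace the false equivalence by the inequality you actually need, namely $\|Wf\|_{L^{p}_{x}L^{2}_{\w}L^{2}_{\sigma}(\Spp)}\lesssim\|f\|_{\HT^{s(p),p}(\Rn)}$, observe that on a dyadic annulus this left-hand side is comparable to $\bigl\|\bigl(\sum_{\nu\in V_{R}}|\chi_{\nu}(D)f|^{2}\bigr)^{1/2}\bigr\|_{L^{p}(\Rn)}$, so the inequality is exactly the hard square-function estimate that constitutes the theorem in this range; the Minkowski step does not bypass it, it only reformulates it, and your proposal supplies no proof of it. The paper proves this estimate directly: for $p=1$ it runs an atomic decomposition (Cauchy--Schwarz and Plancherel on the anisotropic core $B_{10r}^{\w}(0)$ of the atom, kernel decay off the core, and the cancellation of the atom for the low frequencies $2^{j}<r^{-1}$), and then uses Stein complex interpolation against the trivial $L^{2}\to L^{2}(S^{n-1};L^{2})$ bound to reach $1<p<2$. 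An argument of this type is what is missing from your proof. Separately, Proposition \ref{prop:duality} and Lemma \ref{lem:normtransform} are only available for $p,q\in(1,\infty)$, so the endpoint cases $p\in\{1,\infty\}$ cannot be delegated to the same soft machinery and also require the direct argument.
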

\begin{proof}
First note that \eqref{it:Sobolev22} follows by duality from \eqref{it:Sobolev21} if $p\in(1,\infty)$, cf.~Theorem \ref{thm:duality}, and from \eqref{eq:Sobolev11} and \eqref{eq:Sobolevfunpps} if $p=1$. Moreover, in light of \eqref{eq:Sobolev11}, we only need to prove \eqref{it:Sobolev21} when $q = \max(p,2)$. This means we need to establish \eqref{it:Sobolev21} when $2 \leq p < \infty$ and $q = p$, which is already contained in \eqref{eq:Sobolevfunpps}, and to establish \eqref{it:Sobolev21} when $1 < p \leq 2$ and $q = 2$, which we will do in the following. 

Below we prove \eqref{it:Sobolev21} when $p = 1$ and $q = 2$, and then we appeal to interpolation.

\subsubsection{Preliminary work}

Assume without loss of generality that $s = - s(1) = -\frac{n-1}{4}$. Our goal is to show that
\[
\|\rho(D)f\|_{L^1(\Rn)} + \Big( \int_{S^{n-1}} \|\ph_{\w}(D)f\|_{\HT^{-\frac{n-1}{4},1}(\Rn)}^2 \ud\omega \Big)^{1/2} \lesssim \|f\|_{\HT^1(\R^n)}
\]
for every $f\in \HT^{1}(\Rn)$. 
Since $\|\rho(D)f\|_{L^1(\Rn)} \lesssim \|f\|_{L^1(\Rn)} \lesssim \|f\|_{\HT^1(\Rn)}$, we only need to bound  the second term on the left-hand side above.

For $j \in\Z$, let $\Delta_j$ be the Littlewood--Paley projection onto $\{\xi\in\Rn\mid 2^{j-1}\leq |\xi| \leq 2^{j+1}\}$, and for $\omega \in S^{n-1}$, let
\[
T_{\omega,j} := \varphi_{\omega}(D) |D|^{-\frac{n-1}{4}} \Delta_j.
\]
Due to the support properties of $\ph_{\w}$, one has
\begin{align*}
&\|\ph_{\w}(D)f\|_{\HT^{-\frac{n-1}{4},1}(\Rn)}\eqsim \||D|^{-\frac{n-1}{4}}\ph_{\w}(D)f\|_{H^{1}(\Rn)}\\
&\lesssim \sum_{j=-3}^{0} \||D|^{-\frac{n-1}{4}}\ph_{\w}(D) \Delta_j f\|_{L^{1}(\Rn)}+\Big\| \Big( \sum_{j=1}^{\infty} |T_{\omega,j} f|^2 \Big)^{1/2} \Big\|_{L^1(\R^n)}\\
&\lesssim \|f\|_{\HT^{1}(\Rn)} + \Big\| \Big( \sum_{j=1}^{\infty} |T_{\omega,j} f|^2 \Big)^{1/2} \Big\|_{L^1(\R^n)}.
\end{align*}
Hence it suffices to show that
\begin{equation} \label{eq:main_embed}
\Big( \int_{S^{n-1}} \Big\| \Big( \sum_{j=1}^{\infty} |T_{\omega,j} f|^2 \Big)^{1/2} \Big\|_{L^1(\R^n)}^2 \ud\omega \Big)^{1/2} \lesssim \|f\|_{\HT^1(\R^n)}.
\end{equation}

\subsubsection{Proof of \eqref{eq:main_embed}}

Let $f$ be a local Hardy $\HT^1(\Rn)$ atom, $L^{\infty}$ normalized and adapted to a (Euclidean) ball of radius $r$. By translation invariance we may assume that the ball is centered at $0$. First assume $0 < r < 1$, which is the main case. Then $\|f\|_{L^{\infty}(\Rn)} \leq r^{-n}$, and $\int_{\Rn} f(x)\ud x = 0$.  We will show that 
\begin{equation} \label{1}
\Big( \int_{S^{n-1}} \Big\| \Big( \sum_{j \colon 2^j \geq r^{-1}} |T_{\omega,j} f|^2 \Big)^{1/2} \Big\|_{L^1(B_{10\sqrt{r}}^{\w}(0))}^2 \ud\omega \Big)^{1/2} \lesssim 1
\end{equation}
and
\begin{equation} \label{2}
\Big\| \Big( \sum_{j \colon 2^j \geq r^{-1}} |T_{\omega,j} f|^2 \Big)^{1/2} \Big\|_{L^1(\R^n \setminus B_{10 \sqrt{r}}^{\w}(0))} \lesssim 1 \quad \text{uniformly for $\omega \in S^{n-1}$}.
\end{equation}
Here $B_{10\sqrt{r}}^{\w}(0)$ is the anisotropic ball from \eqref{eq:anisball}. 
In addition, we will show that if $1 < 2^j < r^{-1}$, then 
\begin{equation} \label{3}
\|T_{\omega,j} f\|_{L^1(\R^n)} \lesssim 2^j r \quad \text{uniformly for $\omega \in S^{n-1}$},
\end{equation}
which implies
\begin{equation} \label{4}
\Big\| \Big( \sum_{j \colon 1 < 2^j < r^{-1}} |T_{\omega,j} f|^2 \Big)^{1/2} \Big\|_{L^1(\R^n)} \lesssim 1 \quad \text{uniformly for $\omega \in S^{n-1}$}
\end{equation}
(because we can bound the $\ell^2$ norm by the $\ell^1$ norm and apply Fubini). Estimates 
\eqref{1}, \eqref{2} and \eqref{4} will imply our desired estimate \eqref{eq:main_embed}, once we have also obtained the corresponding result for $r\geq 1$. So we now proceed to prove \eqref{1}, \eqref{2} and \eqref{3}.

\subsubsection{Proof of \eqref{1}}

Here we use $L^2$ theory. Cauchy--Schwarz, \eqref{eq:volumeball} and Plancherel give
\begin{align*}
&\Big\| \Big( \sum_{j \colon 2^j \geq r^{-1}} |T_{\omega,j} f|^2 \Big)^{1/2} \Big\|_{L^1(B_{10 \sqrt{r}}^{\w}(0))}\\
& \leq |B_{10 \sqrt{r}}^{\w}(0)|^{1/2} \Big\| \Big( \sum_{j \colon 2^j \geq r^{-1}} |T_{\omega,j} f|^2 \Big)^{1/2} \Big\|_{L^2(\R^n)} \\
& \lesssim r^{\frac{n+1}{4}} \Big( \sum_{j \colon 2^j \geq r^{-1}}  \|\ind_{R_{\omega,j}} \widehat{f}\,\|_{L^2(\R^n)}^2 \Big)^{1/2},
\end{align*}
where $R_{\omega,j}$ is the support of the multiplier for $T_{\omega,j}$; it is a dyadic-parabolic region of size $2^j \times (2^{j/2})^{n-1}$.
Taking the $L^2$ norm over $\omega \in S^{n-1}$, we have
\[
\begin{split}
& \quad \Big( \int_{S^{n-1}} \Big\| \Big( \sum_{j \colon 2^j \geq r^{-1}} |T_{\omega,j} f|^2 \Big)^{1/2} \Big\|_{L^1(B_{10\sqrt{r}}(0,\omega))}^2 \ud\omega \Big)^{1/2} \\
& \lesssim r^{\frac{n+1}{4}} \Big( \sum_{j \colon 2^j \geq r^{-1}}  \|\ind_{R_{\omega,j}} \widehat{f}\|_{L^2(\R^n \times S^{n-1})}^2 \Big)^{1/2} \\
& \lesssim r^{\frac{n+1}{4}} \Big( \sum_{j \colon 2^j \geq r^{-1}}  \|\ind_{|\xi| \eqsim 2^j} \widehat{f}\|_{L^2(\R^n)}^2 2^{-\frac{j}{2}(n-1)} \Big)^{1/2}
\end{split}
\]
where in the last inequality we have used the fact that 
\[
\int_{S^{n-1}} [\ind_{R_{\omega,j}}(\xi)]^2 \ud\omega \lesssim 2^{-\frac{j}{2}(n-1)}.
\]
We may now bound $2^{-\frac{j}{2}(n-1)}$ trivially by $r^{\frac{n-1}{2}}$, and pull it outside the sum in $j$. Thus
\[
\begin{split}
& \Big( \int_{S^{n-1}} \Big\| \Big( \sum_{j \colon 2^j \geq r^{-1}} |T_{\omega,j} f|^2 \Big)^{1/2} \Big\|_{L^1(B_{10\sqrt{r}}^{\w}(0))}^2 \ud\omega \Big)^{1/2} \\
\lesssim & r^{\frac{n+1}{4}} r^{\frac{n-1}{4}} \|f\|_{L^2(\R^n)} \lesssim r^{\frac{n+1}{4}} r^{\frac{n-1}{4}} r^{-\frac{n}{2}} \eqsim 1,
\end{split}
\]
establishing \eqref{1}.

\subsubsection{Proofs of \eqref{2} and \eqref{3}}

Here we use kernel estimates. By rotation symmetry, we will assume $\omega = e_1$ and decompose $\xi = (\xi_1, \xi')$ correspondingly. For $j \geq 1$, let $m_j$ be the multiplier of $T_{\omega,j}$ and $K_j$ be the inverse Fourier transform of $m_j$. Then $m_j$ is supported in $\{|\xi_1| \eqsim 2^j, |\xi'| \lesssim 2^{j/2}\}$, and
\[
\|\partial_{\xi_1}^{\beta} \partial_{\xi'}^{\alpha} m_j\|_{L^{\infty}(\Rn)} \lesssim 2^{-\frac{j}{2} (|\alpha|+2 \beta)}
\]
for all $\alpha\in\Z_{+}^{n-1}$ and $\beta\in\Z_{+}$. It follows that, similarly to \eqref{eq:boundspsiinverse}, 
\begin{equation} \label{5}
|K_j(x)| \lesssim 2^{\frac{j}{2}(n+1)} (1 + 2^{\frac{j}{2}} |x|_\omega)^{-N}
\end{equation}
for all $x\in\Rn$ and $N\geq0$; furthermore,
\begin{equation} \label{6}
|\partial_{x_1}^{\beta} \partial_{x'}^{\alpha} K_j(x)| \lesssim 2^{\frac{j}{2}(n+1+|\alpha|+2\beta)} (1 + 2^{\frac{j}{2}} |x|_\omega)^{-N}
\end{equation}
for all $N\geq0$. We will use \eqref{5} in the proof of \eqref{2}, and use \eqref{6} (with $|\alpha| + \beta = 1$) in the proof of \eqref{3}.

To continue with the proof of \eqref{2}, from \eqref{5}, we have
\[
\Big(\sum_{j \colon 2^j \geq r^{-1}} |K_j(x)|^2 \Big)^{1/2} \lesssim \Big( \sum_{j \colon 2^j \geq r^{-1}} 2^{j(n+1)} (2^{\frac{j}{2}} |x|_\omega)^{-2N} \Big)^{1/2} \lesssim r^{-\frac{n+1}{2}} (r^{-\frac{1}{2}} |x|_\omega)^{-N} 
\]
for all $x \neq0$ if $N > n+1$ (fix one such $N$ from now on). This is useful for $x \notin B_{10 \sqrt{r}}^{\w}(0)$: in that case $|x|_\omega \geq 10\sqrt{r}$ and
\[
\Big( \sum_{j \colon 2^j \geq r^{-1}} |T_{\omega,j} f(x)|^2 \Big)^{1/2}
\leq \Big( \sum_{j \colon 2^j \geq r^{-1}} |f|*|K_j|^2(x) \Big)^{1/2}
\leq r^{-\frac{n+1}{2}} (r^{-\frac{1}{2}} |x|_\omega)^{-N}.
\]
More precisely, the first inequality is Cauchy--Schwarz, using that $\int_{\Rn} |f(x)|\ud x \leq 1$, and the second inequality holds if $|x|_\omega \geq 10\sqrt{r}$, because then \eqref{eq:equivnorm} shows that
\begin{align*}
|f|*\sum_{j \colon 2^j \geq r^{-1}} |K_j|^2(x) &\lesssim r^{-n} \int_{|y| \leq r} r^{-(n+1)} (r^{-\frac{1}{2}} |x-y|_\omega)^{-2N} \ud y \\
&\lesssim r^{-(n+1)} (r^{-\frac{1}{2}} |x|_\omega)^{-2N},
\end{align*}
which upon taking the square root gives the second inequality. 

As a result, the left hand side of \eqref{2} is bounded by a multiple of
\[
\int_{\Rn\setminus B_{10\sqrt{r}}^{\w}(0)} r^{-\frac{n+1}{2}} (r^{-\frac{1}{2}} |x|_\omega)^{-N}  \ud x \lesssim 1,
\]
as desired.

Finally, the proof of \eqref{3} makes additional use of the cancellation condition $\int_{\Rn} f(y)\ud y = 0$. To this end, suppose $1 < 2^j < r^{-1}$. Then 
\[
T_{\omega,j}f(x) = \int_{\R^n} f(y) [K_j(x-y)-K_j(x)] \ud y
\]
for all $x\in\Rn$, because $\int_{\Rn} f(y)\ud y = 0$. As a result,
\[
\|T_{\omega,j}f \|_{L^1(\R^n)} \leq \sup_{|y| \leq r} \int_{\R^n} |K_j(x-y)-K_j(x)| \ud x.
\]
But for $|y| \leq r$, 
\[
\int_{\R^n} |K_j(x-y)-K_j(x)| \ud x \leq \int_0^1 \int_{\R^n} |y \cdot \nabla K_j(x-ty)| \ud x\ud t \leq r \int_{\R^n} |\nabla K_j(x)| \ud x.
\]
From \eqref{6}, we have
\[
\int_{\R^n} |\partial_{x_1} K_j(x)| \ud x \lesssim 2^j,
\]
while
\[
\int_{\R^n} |\partial_{x'} K_j(x)| \ud x \lesssim 2^{\frac{j}{2}} \leq 2^j.
\]
This completes our proof of \eqref{3} when $0<r<1$.

\subsubsection{Proof of \eqref{eq:main_embed} for large balls}

It remains to consider \eqref{eq:main_embed} when $f$ is a local Hardy atom adapted to $B_r(0)$ with $r \geq 1$, which is easier. In that case, we will show that
\begin{equation} \label{7}
\Big\| \Big( \sum_{j=1}^{\infty} |T_{\omega,j} f|^2 \Big)^{1/2} \Big\|_{L^1(\R^n)} \lesssim 1
\end{equation}
uniformly in $\omega \in S^{n-1}$, from which \eqref{eq:main_embed} then follows. To prove \eqref{7}, we split the integral into an integral over $B_{10 r}(0)$ and one over its complement. Then $L^2$ theory gives
\begin{align*}
\Big\| \Big( \sum_{j=1}^{\infty} |T_{\omega,j} f|^2 \Big)^{1/2} \Big\|_{L^1(B_{10r}(0))} &\leq |B_{10r}(0)|^{1/2} \Big\| \Big( \sum_{j=1}^{\infty} |T_{\omega,j} f|^2 \Big)^{1/2} \Big\|_{L^2(\R^n)} \\
&\lesssim r^{\frac{n}{2}} \|f\|_{L^2(\R^n)} \lesssim 1,
\end{align*}
and kernel estimates give
\[
|K_j(x)| \lesssim 2^{\frac{j}{2} (n+1)} (2^j |x|)^{-N}.
\]
More precisely, the last inequality follows from \eqref{5} with $N$ replaced by $2N$ there, because \eqref{eq:equivnorm} implies that $\sqrt{8}|x|_\omega \geq |x|^{1/2}$ if $|x| \geq 2$. It thus follows that, for $x \notin B_{10 r}(0)$ and $N$ large,
\[
\Big( \sum_{j=1}^{\infty} |K_j(x)|^2 \Big)^{1/2} \lesssim |x|^{-N}
\]
and 
\[
\Big( \sum_{j=1}^{\infty} |T_{\omega,j} f|^2 \Big)^{1/2}
\leq \Big( \sum_{j=1}^{\infty} |f|*|K_j|^2 \Big)^{1/2}
\lesssim |x|^{-N}.
\]
This shows that
\[
\Big\| \Big( \sum_{j=1}^{\infty} |T_{\omega,j} f|^2 \Big)^{1/2} \Big\|_{L^1(\R^n \setminus B_{10r}(0))} \lesssim 1
\]
as well, giving finally \eqref{7}. Note we did not need any cancellation condition on $f$ in the above argument when $r\geq 1$.

\subsubsection{Interpolation}

The above argument can be repeated with a complex $z$ with $\Real z = -\frac{n-1}{4}$. The conclusion is that the operator
\[
T_z f(x,\w) := (I-\Delta)^{z/2} \varphi_{\omega}(D)f(x)
\]
is bounded from $H^1(\Rn)$ into $L^2(S^{n-1};H^1(\Rn))$, the bound growing at most polynomially in $\Imag z$. This is because growth in $\Imag z$ arises from differentiating the kernel $K_j$, and the argument only requires a finite number of such derivatives. %In particular, it is bounded from $H^1(\Rn)$ to $L^2(S^{n-1};L^1(\Rn))$, and one can check that the bound grows at most polynomially in the imaginary part of $z$. 
Furthermore, $T_z$ is bounded from $L^2(\Rn)$ to $L^2(S^{n-1};L^2(\Rn))$ when $\textrm{Re}\,z = 0$, by Plancherel, and this bound is independent of the imaginary part of $z$. Also note that if $\frac{1}{p} = \frac{1-\theta}{1} + \frac{\theta}{2}$, for $\theta\in(0,1)$ and $1<p<2$, then we have \[
(H^1(\Rn),L^2(\Rn))_{\theta} = L^p(\Rn)
\]
 (see e.g. \cite[Theorem 2.4.7]{Triebel10}) and 
\[
(L^2(S^{n-1};L^1(\Rn)), L^2(S^{n-1};L^2(\Rn)))_{\theta} = L^2(S^{n-1};L^p(\Rn)),
\]
by e.g. \cite[Theorems 5.1.1 and 5.1.2]{Bergh-Lofstrom76}). Thus by Stein's complex interpolation theorem, $T_{-s(p)}$ is bounded from $L^p(\Rn)$ to $L^2(S^{n-1};L^p(\Rn))$ for $1 < p < 2$. In other words,
\[
\left( \int_{S^{n-1}} \|\varphi_{\omega}(D)f\|_{L^p(\Rn)}^2 \ud\omega \right)^{1/2} \lesssim \|f\|_{W^{s(p),p}(\Rn)}
\]
for $1 < p < 2$. Together with the estimate $\|\rho(D)f\|_{L^p(\Rn)} \lesssim \|f\|_{W^{s(p),p}(\Rn)}$ we obtain the assertion of Theorem \ref{thm:Sobolev2} (\ref{it:Sobolev21}) for $1 < p < 2$.
\end{proof}

%\begin{remark}\label{rem:Sobolevsharp}
%Something about the Sobolev embeddings being sharp. Maybe turn this into a proposition. Might follow from mapping properties of the half-wave propagators, but might also have to use concrete examples for $q$ outside of a certain range.
%\end{remark}

\subsection{Fractional integration}\label{subsec:fractint}

In this subsection we consider embeddings between the spaces $\mathcal{L}_{W,s_{1}} ^{q,p_{1}}(\Rn)$ and $\mathcal{L}_{W,s_{2}} ^{q,p_{2}}(\Rn)$.  

More precisely, we consider fractional integration theorems, where one trades in the differentiability parameter $s$ to increase the integrability parameter $p$. In this regard, the classical result says that 
\begin{equation}\label{eq:fracintclas}
\HT^{n(\frac{1}{p}-\frac{1}{r}),p}(\Rn)  \subseteq \HT^{r}(\Rn)
\end{equation}
for all $1\leq p\leq r\leq \infty$ and $s\in\R$ with $p\leq r$. The following is an analog of this embedding involving our function spaces and containing part of Theorem \ref{thm:Sobolevintro}.

\begin{theorem}\label{thm:Sobolev1}
Let $p,q,r\in[1,\infty)$ and $s \in\R$ be such that $p\leq  r$. Then
%\[
%\frac{1}{p_2} := \frac{1}{p_1} - \frac{2s}{n+1},
%\]
%then
\begin{equation}\label{eq:fracintdec}
\mathcal{L}_{W,s+\frac{n+1}{2}(\frac{1}{p}-\frac{1}{r})} ^{q,p}(\Rn)
 \subseteq \mathcal{L}_{W,s} ^{q,r}(\Rn).
\end{equation}
\end{theorem}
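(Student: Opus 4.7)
The plan is to deduce this fractional-integration-type embedding from the pointwise-in-$\omega$ parabolic Sobolev embedding of Proposition~\ref{prop:mappingphiw}(1), together with a standard Bernstein inequality for the low-frequency cutoff. Since Proposition~\ref{prop:mappingphiw}(1) is stated for $u>p$ with Sobolev shift $\frac{n+1}{2}(\frac{1}{p}-\frac{1}{u})$, the exponents in \eqref{eq:fracintdec} match up naturally when one takes $p=p_{1}$ and $u=p_{2}$, which reads the statement in the fractional-integration direction (with $p_{1}\leq p_{2}$) parallel to the classical inequality \eqref{eq:fracintintro} recalled in the introduction. Setting $s_{0}:=\tfrac{n+1}{2}(\tfrac{1}{p_{1}}-\tfrac{1}{p_{2}})\geq0$ in that ordering, the goal becomes $\fun^{p_{1},q;s+s_{0}}(\Rn)\subseteq\fun^{p_{2},q;s}(\Rn)$.

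First I would reduce to $s=0$ using $\funpqs=\lb D\rb^{-s}\fun^{p,q;0}(\Rn)$ and the commutation of $\lb D\rb^{s}$ with $\rho(D)$ and each $\varphi_{\omega}(D)$. Then, for the main (wave-packet) part of the norm, apply Proposition~\ref{prop:mappingphiw}(1) to $\lb D\rb^{s_{0}}f$ with $p=p_{1}$ and $u=p_{2}$; since the implicit constant is uniform in $\omega\in S^{n-1}$ and $\lb D\rb^{s_{0}}$ commutes with $\varphi_{\omega}(D)$, one obtains, pointwise in $\omega$,
\[
\|\varphi_{\omega}(D)f\|_{L^{p_{2}}(\Rn)}\lesssim \|\varphi_{\omega}(D)f\|_{\HT^{s_{0},p_{1}}(\Rn)}.
\]
Raising this to the $q$-th power and integrating over $S^{n-1}$ then bounds the wave-packet part of $\|f\|_{\fun^{p_{2},q;0}(\Rn)}$ by the wave-packet part of $\|f\|_{\fun^{p_{1},q;s_{0}}(\Rn)}$.

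For the low-frequency cutoff $\rho(D)f$, which is Fourier-supported in a fixed compact set, Bernstein's inequality (e.g.~via $\rho(D)f=(\F^{-1}\tilde{\rho})\ast\rho(D)f$ for a slightly larger bump $\tilde{\rho}$ followed by Young's convolution inequality) gives $\|\rho(D)f\|_{L^{p_{2}}(\Rn)}\lesssim\|\rho(D)f\|_{L^{p_{1}}(\Rn)}$ in the same direction. Adding the two estimates concludes the argument. The principal technical input, the uniformity in $\omega$ of the constant in Proposition~\ref{prop:mappingphiw}(1), is itself a consequence of the rotation invariance of the anisotropic Hardy spaces $H^{p}_{\omega}(\Rn)$ together with Proposition~\ref{prop:equivpar}; with those ingredients already available, I do not foresee any serious further obstacle in the present proof.
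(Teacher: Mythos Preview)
Your proposal is correct and follows essentially the same route as the paper's proof: reduce to $s=0$, apply Proposition~\ref{prop:mappingphiw}\eqref{it:mappingphiw1} pointwise in $\omega$ to bound $\|\varphi_{\omega}(D)f\|_{\HT^{p_{2}}(\Rn)}\lesssim\|\varphi_{\omega}(D)f\|_{\HT^{r,p_{1}}(\Rn)}$ with $r=\tfrac{n+1}{2}(\tfrac{1}{p_{1}}-\tfrac{1}{p_{2}})$, then integrate in $\omega$. You are also right to flag that the hypothesis should read $p_{1}\leq p_{2}$ (the paper's own proof and the surrounding discussion confirm this), and your explicit treatment of the low-frequency piece via Bernstein is a detail the paper leaves implicit.
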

\begin{proof}[Proof of Theorem \ref{thm:Sobolev1}]
We may suppose that $s=0$. Recall that
\[
\|f\|_{\mathcal{L}_{W,0} ^{q,r}(\Rn)} =\|\rho(D) f\|_{L^{r}(\Rn)} + \left( \int_{S^{n-1}} \|\varphi_{\omega}(D) f\|_{\HT^{r}(\Rn)}^q \ud\omega \right)^{1/q}.
\]
But 
\[
\|\varphi_{\omega}(D) f\|_{\HT^{r}(\Rn)} \lesssim \|\varphi_{\omega}(D) f\|_{\HT^{\frac{n+1}{2}(\frac{1}{p}-\frac{1}{r}),p}(\Rn)}
\]
by Proposition \ref{prop:mappingphiw} \eqref{it:mappingphiw1}, which implies the required statement when one applies \eqref{eq:fracintclas} to $\rho(D)f$. 
\end{proof}

\begin{remark}\label{rem:fracintimproved}
If $p\leq 2\leq r<\infty$, then one can recover \eqref{eq:fracintclas} from Theorem \ref{thm:Sobolev1}, by combining \eqref{eq:fracintdec} with Theorem \ref{thm:Sobolev2}:
\[
\HT^{n(\frac{1}{p}-\frac{1}{r}),p}(\Rn)  \subseteq
\mathcal{L}_{W,n(\frac{1}{p}-\frac{1}{r})-s(p)} ^{2,p}(\Rn)
\subseteq \mathcal{L}_{W,\frac{n-1}{2}(\frac{1}{p}-\frac{1}{r})-s(p)} ^{2,r}(\Rn) \subseteq \HT^{r}(\Rn).
\]
Here we used that $s(p)+s(r)=\frac{n-1}{2}(\frac{1}{p}-\frac{1}{r})$, because $p\leq 2\leq r$. In fact, by the sharpness of the embeddings in Theorem \ref{thm:Sobolev2}, in this case \eqref{eq:fracintdec} is a strict improvement of \eqref{eq:fracintclas}, at least if $(p,r)\neq (2,2)$. On the other hand, for $p,r\leq 2$ or $p,r\geq 2$, again due to the sharpness of the embeddings in Theorem \ref{thm:Sobolev2}, \eqref{eq:fracintclas} neither follows from \eqref{eq:fracintdec}, nor the other way around.
\end{remark}

Although Theorem \ref{thm:Sobolev1} only yields a strict improvement of \eqref{eq:fracintclas} for $p\leq 2\leq r$, for general $p\leq r$ we can nonetheless improve a classical result about the mapping properties of Fourier integral operators. Recall the definition of Fourier integral operators in standard form, from Definition \ref{def:operator}.

\begin{corollary}\label{cor:fracFIO}
Let $m\in\R$, and let $T$ be one of the following:
\begin{enumerate}
\item\label{it:fracFIO1} a Fourier integral operator of order $m$ and type $(1/2,1/2,1)$ in standard form, the symbol of which has compact support in the spatial variable; 
\item\label{it:fracFIO2} a Fourier integral operator of order $m$ and type $(1/2,1/2,1)$ in standard form, associated with a global canonical graph;
\item\label{it:fracFIO3} a compactly supported Fourier integral operator of order $m$ and type $(1,0)$, associated with a local canonical graph.
\end{enumerate}
Let $1\leq p\leq q<\infty$ and $s \in\R$. Then 
\begin{equation}\label{eq:fracFIO}
T:
\mathcal{L}_{W,s+m+\frac{n+1}{2}(\frac{1}{p}-\frac{1}{q})} ^{q,p}(\Rn) 
\to\funpqs
\end{equation}
is bounded. In particular, suppose that one of the following conditions holds:
\begin{enumerate}[(a)]
\item\label{it:fracFIO4} $1 \leq p \leq q \leq 2$ and $m=\frac{1}{q}-\frac{1}{p}-2s(p)$;
\item\label{it:fracFIO5} $2 \leq p \leq q < \infty$ and $m=\frac{1}{q}-\frac{1}{p}-2s(q)$;
\item\label{it:fracFIO6} $1\leq p\leq 2 \leq q < \infty$ and $m=n(\frac{1}{q}-\frac{1}{p})$.
\end{enumerate}
Then 
\begin{equation}\label{eq:fracFIO2}
T:\HT^p(\Rn)\to L^q(\Rn)
\end{equation}
is bounded.
\end{corollary}
%Mention later that these results are sharp in general.
\begin{proof}
One obtains \eqref{eq:fracFIO} by combining Theorem \ref{thm:Sobolev1} with the boundedness of $T$ from $\mathcal{L}_{W,s+m} ^{q,q}(\Rn)=\HT^{s+m,q}_{FIO}(\Rn)$ to $\mathcal{L}_{W,s} ^{q,q}(\Rn)=\HT^{s,q}_{FIO}(\Rn)$. For $m=s=0$, this invariance is contained in \cite[Theorem 6.10]{HaPoRo20} in cases \eqref{it:fracFIO2} and \eqref{it:fracFIO3}, and the techniques used there also allow one to treat operators as in \eqref{it:fracFIO1}. For general $m,s\in\R$, the mapping property $T:\HT^{s+m,q}_{FIO}(\Rn)\to\HT^{s,q}_{FIO}(\Rn)$ can be found in \cite[Proposition 3.3 and Corollary 3.4]{LiRoSoYa24} in cases \eqref{it:fracFIO1} and \eqref{it:fracFIO3}. In case \eqref{it:fracFIO2} it follows from \cite[Proposition 3.3]{Rozendaal22b}, after precomposing with the operator $\lb D\rb^{-m}$.

In turn, \eqref{eq:fracFIO2} follows from \eqref{eq:fracFIO}, together with Sobolev embeddings. Indeed, in cases \eqref{it:fracFIO4} and \eqref{it:fracFIO5}, Theorem \ref{thm:Sobolev2} implies that 
\[
\HT^{p}(\Rn)\subseteq \mathcal{L}_{W,-s(p)} ^{q,p}(\Rn)=
 \mathcal{L}_{W,s(q)+m+\frac{n+1}{2}(\frac{1}{p}-\frac{1}{q})} ^{q,p}(\Rn).
\]
Hence \eqref{eq:fracFIO} and the embedding $\mathcal{L}_{W,s(q)} ^{q,p}(\Rn)\subseteq L^{q}(\Rn)$, again from Theorem \ref{thm:Sobolev2}, yield \eqref{eq:fracFIO2}. On the other hand, for \eqref{it:fracFIO6} one can combine Theorems \ref{thm:Sobolev2} and \ref{thm:Sobolev1} to obtain
\[
\HT^{p}(\Rn)\subseteq \mathcal{L}_{W,-s(p)} ^{2,p}(\Rn)\subseteq \mathcal{L}_{W,-n(\frac{1}{p}-\frac{1}{2})} ^{2,2}(\Rn)
\]
and
\[
\mathcal{L}_{W,-m-n(\frac{1}{p}-\frac{1}{2})} ^{2,2}(\Rn)\subseteq \mathcal{L}_{W,-m-n(\frac{1}{p}-\frac{1}{2})-\frac{n+1}{2}(\frac{1}{2}-\frac{1}{q})} ^{2,q}(\Rn)\subseteq L^{q}(\Rn).
\]
An application of \eqref{eq:fracFIO} in the middle, which in fact boils down to $L^{2}$ theory, thus concludes the proof.
\end{proof}

\begin{remark}\label{rem:fracFIO}
In the case of an operator as in \eqref{it:fracFIO1} with symbol in $S^{m}_{1,0}$, the mapping property in \eqref{eq:fracFIO2} is well known (see e.g.~\cite[Section IX.6.15]{Stein93}), and from this one can derive the same statement for operators as in \eqref{it:fracFIO3}. Corollary \ref{cor:fracFIO} improves upon this result in several ways, since it allows for the larger class of $S^{m}_{1/2,1/2,1}$ symbols, removes the assumption that the symbol has compact spatial support, cf.~\eqref{it:fracFIO2}, and yields stronger estimates through \eqref{eq:fracFIO}, due to the sharpness of the embeddings in Theorem \ref{thm:Sobolev2}. In particular, as follows from the proof of \eqref{eq:fracFIO2} in case \eqref{it:fracFIO6} , Corollary \ref{cor:fracFIO} extends the improved fractional integration result mentioned in Remark \ref{rem:fracintimproved}, from the identity operator to general Fourier integral operators.
\end{remark}

\begin{remark}\label{rem:fracFIO2}
Corollary \ref{cor:fracFIO} is sharp, in the sense that the exponent $q$ cannot be enlarged (or the parameter $m$ decreased)  for general Fourier integral operators as in \eqref{it:fracFIO1}, \eqref{it:fracFIO2} or \eqref{it:fracFIO3}. In fact, \eqref{eq:fracFIO} implies \eqref{eq:fracFIO2}, and the latter is already sharp, as is noted in \cite[Section IX.6.16]{Stein93}.

Corollary~\ref{cor:fracFIO} also implies that the operator from Section \ref{subsec:unboundedop}, despite not being bounded on $\funpqs$, is bounded from $\mathcal{L}_{W,s+\frac{n+1}{2}(\frac{1}{p}-\frac{1}{q})} ^{q,p}(\Rn)$ to $\funpqs$ if $q>p$. 
\end{remark}

\section{Decoupling}\label{sec:decouple}

In this section we show that the decoupling inequalities for the sphere and the light cone are equivalent to norm bounds for certain functions in $\funpqs$ and $\funpqsone$, respectively. To do so, we first give an equivalent description of the $\funpqs$ norm of functions with frequency support in a dyadic annulus, which in turn shows that the $\funpqs$ norm itself behaves well under decoupling.

\subsection{A discrete description of the $\funpqs$ norm}\label{subsec:discrete}

Throughout this section, for each $R\geq 2$, fix a maximal collection $V_{R}\subseteq S^{n-1}$ of unit vectors such that $|\nu-\nu'|\geq R^{-1/2}$ for all $\nu,\nu'\in V_{R}$. Then $V_{R}$ has approximately $R^{(n-1)/2}$ elements. 
Let $(\chi_{\nu})_{\nu\in V_{R}}\subseteq C^{\infty}(\Rn\setminus\{0\})$ be an associated partition of unity. More precisely, each $\chi_{\nu}$ is positively homogeneous of degree $0$ and satisfies $0\leq \chi_{\nu}\leq 1$ and 
\[
\supp(\chi_{\nu})\subseteq\{\xi\in\Rn\setminus\{0\}\mid |\hat{\xi}-\nu|\leq 2R^{-1/2}\}.
\]
Moreover, $\sum_{\nu\in V_{R}}\chi_{\nu}(\xi)=1$ for all $\xi\neq0$, and for all $\alpha\in\Z_{+}^{n}$ and $\beta\in\Z_{+}$ there exists a $C_{\alpha,\beta}\geq0$ independent of $k$ such that, if $R/2\leq |\xi|\leq 2R$, then
\[
|(\hat{\xi}\cdot\partial_{\xi})^{\beta}\partial_{\xi}^{\alpha}\chi_{\nu}(\xi)|\leq C_{\alpha,\beta}R^{-(|\alpha|/2+\beta)}
\]
for all $\nu\in V_{R}$. Such a collection is straightforward to construct (see e.g.~\cite[Section IX.4]{Stein93}), and the collection $\{\F^{-1}(\chi_{\nu})\mid R\geq 2, \nu\in V_{R}\}$ is uniformly bounded in $L^{1}(\Rn)$. 

The following proposition, which gives an equivalent description of the $\funpqs$ norm of a function with frequency support in a dyadic annulus, is the key tool to relate $\funpqs$ to decoupling inequalities. It contains part of Theorem \ref{thm:decoupleintro}.

\begin{proposition}\label{prop:discrete}
Let $p,q\in[1,\infty)$ and $s \in \R$. Then there exists a $C>0$ such that the following holds. Let $f\in \funpqs$ be such that $\supp(\wh{f}\,)\subseteq\{\xi\in\Rn\mid R/2\leq |\xi|\leq 2R\}$ for some $R\geq 2$. Then
\begin{equation}\label{eq:discretenorm}
\frac{1}{C}\|f\|_{\funpqs}\leq R^{s+\frac{n-1}{2}(\frac{1}{2}-\frac{1}{q})}\Big(\!\sum_{\nu\in V_{R}}\!\|\chi_{\nu}(D)f\|_{L^{p}(\Rn)}^{q}\Big)^{1/q}\!\leq C\|f\|_{\funpqs}
\end{equation}
and
\begin{equation}\label{eq:discretenormdecoupled}
\frac{1}{C}\|f\|_{\funpqs} \leq  \Big(\sum_{\nu\in V_{R}}\|\chi_{\nu}(D)f\|_{\funpqs}^{q}\Big)^{1/q}\leq C\|f\|_{\funpqs}.
\end{equation}
\end{proposition}
\begin{proof}
The proof of \eqref{eq:discretenorm} where $p=q$ is covered by \cite[Proposition 4.1]{Rozendaal22b}, at least for $R=2^{k}$ with $k\in\N$. For general $p$ and $q$, the statement follows from \cite[Proposition 2.4]{Rozendaal-Schippa23}, since $\|f\|_{L^{p}(\Rn)}$ and the Besov space norm $\|f\|_{B^{0}_{p,q}(\Rn)}$ are comparable, which in turn follows from the assumption that $f$ has frequency support in a dyadic annulus. %To deal with $q=\infty$ one can either use the same arguments as in the proof of \cite[Proposition 2.4]{Rozendaal-Schippa23}, or duality, cf.~Proposition \ref{prop:}.
%\begin{remark}\label{rem:dyadicpar}

To prove \eqref{eq:discretenormdecoupled}, let $g\in \funpqs$ be such that 
\begin{equation}\label{eq:fparloc}
\supp(\wh{g})\subseteq\{\xi\in\Rn\mid R/2\leq |\xi|\leq 2R, |\hat{\xi}-\nu|\leq 2R^{-1/2}\}
\end{equation}
for some $R\geq 2$ and $\nu\in S^{n-1}$. Then $\chi_\w(D) g$ is only nonzero for a fixed finite number of $\w \in V_R$, independent of $R$. Hence \eqref{eq:discretenorm}, the fact that the $\chi_{\w}(D)$ have kernels that are uniformly in $L^{1}(\Rn)$, and H\"{o}lder's inequality yield
\begin{equation}\label{eq:fparloc2}
\begin{aligned}
\|g\|_{\funpqs}&\eqsim R^{s+\frac{n-1}{2}(\frac{1}{2}-\frac{1}{q})}\Big(\sum_{\w\in V_{R}}\|\chi_{\w}(D)g\|_{L^{p}(\Rn)}^{q}\Big)^{1/q}\\
&\lesssim  R^{s+\frac{n-1}{2}(\frac{1}{2}-\frac{1}{q})}\|g\|_{L^{p}(\Rn)}\\
&=R^{s+\frac{n-1}{2}(\frac{1}{2}-\frac{1}{q})}\Big\|\sum_{\w\in V_{R}}\chi_{\w}(D)g\Big\|_{L^{p}(\Rn)}\\
&\lesssim R^{s+\frac{n-1}{2}(\frac{1}{2}-\frac{1}{q})}\Big(\sum_{\w\in V_{R}}\|\chi_{\w}(D)g\|_{L^{p}(\Rn)}^{q}\Big)^{1/q}\eqsim \|g\|_{\funpqs}.
\end{aligned}
\end{equation}
Applying this with $g=\chi_{\nu}(D)f$ and substituting back into \eqref{eq:discretenorm} yields \eqref{eq:discretenormdecoupled}. 
\end{proof}

\subsection{Decoupling for the sphere}\label{subsec:decoupleshere}

%We first recall the decoupling inequality for the sphere. 

%For each $\delta\in(0,1/2)$, let $\Theta_{\delta}$ be a collection of approximately $\delta^{-(n-1)/2}$ curved rectangles of dimensions $\delta^{1/2} \times \dots \times \delta^{1/2} \times \delta$ that tile a $\delta$ neighborhood %$S_{\delta}$ 
%of the sphere $S^{n-1}$ in $\Rn$. %For $f\in \Sw'(\Rn)$ with $\supp(\wh{f}\,)\subseteq S_{\delta}$ and $\theta\in \Theta_{\delta}$, write\ $f_{\theta}:=\ind_{\theta}(D)f$. Next, f

For $\ka\geq 1$ and $R\geq 2\ka$, let $S_{R,\ka}$ be a $\ka R^{-1}$-neighborhood of the sphere $S^{n-1}$ in $\Rn$, and let $V_{R}$ be as in the previous subsection. Then $\theta_{\nu}:=\supp(\chi_{\nu})\cap S_{R,\ka}$, for each $\nu\in V_{R}$, is a curved rectangle of dimensions approximately $R^{-1/2}\times \dots\times R^{-1/2}\times R^{-1}$ pointing in the direction of $\nu$. Of course, the exact size of the region depends on $\ka$, but this parameter will be fixed below. Moreover, $S_{R,\ka}=\cup_{\nu\in V_{R}}\theta_{\nu}$, and the $\theta_{\nu}$ have finite overlap.

This observation allows us to formulate the decoupling inequality for the sphere. More precisely, for $p,q\in[2,\infty)$, set
\begin{equation}\label{eq:dpq}
d(p,q) := \begin{cases}
s(q)+s(p)-\frac{1}{p} &\quad \text{if }p \geq \frac{2(n+1)}{n-1},\\
s(q) &\quad \text{if }2 \leq p \leq \frac{2(n+1)}{n-1}.
\end{cases}
\end{equation}
Then the $\ell^{q}$ decoupling inequality for the sphere, from \cite{Bourgain-Demeter15}, is as follows.

\begin{theorem}\label{thm:decouplesphere}
Let $p,q\in[2,\infty)$, $\ka\geq1$ and $\veps>0$. Then there exists a $C\geq0$ such that 
\[
\|f\|_{L^{p}(\Rn)}\leq CR^{d(p,q)+\veps}\Big(\sum_{\nu\in V_{R}}\|\chi_{\nu}(D)f\|_{L^{p}(\Rn)}^{q}\Big)^{1/q}
\]
for all $R\geq 2\ka$ and $f\in \Sw(\Rn)$ with $\supp(\wh{f}\,)\subseteq S_{R,\ka}$.
\end{theorem}

We can reinterpret this decoupling inequality as a bound for the $\funpqs$ norm of functions with frequency support which is highly localized in a radial sense. For $1<p<\infty$, set
\begin{equation}\label{eq:alphap}
\alpha(p) := \begin{cases}
s(p)-\frac{1}{p} &\quad \text{if }p \geq \frac{2(n+1)}{n-1},\\
0 &\quad \text{if }\frac{2(n+1)}{n+3} \leq p \leq \frac{2(n+1)}{n-1},\\
s(p)-\frac{1}{p'} &\quad \text{if }p\leq \frac{2(n+1)}{n+3}.
\end{cases}
\end{equation}
Note that $\alpha(p)=\alpha(p')=d(p,q)-s(q)$ for $p\geq 2$, and that we already encountered $\alpha(p)$ for $p>2$ in \eqref{eq:alphapintro}. The following corollary contains Theorem \ref{thm:decoupleintro} \eqref{it:decoupleintro2}.

\begin{corollary}\label{cor:decouplesphere}
Let $p\in(1,\infty)$, $q\in[2,\infty)$, $s\in\R$, $\ka\geq1$ and $\veps>0$. Then there exists a $C\geq0$ such that the following holds for all $f\in \Sw'(\Rn)$ satisfying $\supp(\wh{f}\,)\subseteq\{\xi\in\Rn\mid R-\ka\leq |\xi|\leq R+\ka\}$ for some $R\geq 2\ka$.
\begin{enumerate}
\item\label{it:decouplesphere1} If $p\geq 2$ and $f\in\funpqs$, then $f\in W^{s-\alpha(p)-\veps,p}(\Rn)$ and 
\begin{equation}\label{eq:decouplesphere}
\|f\|_{W^{s-\alpha(p)-\veps,p}(\Rn)}\leq C\|f\|_{\funpqs}.
\end{equation}
\item\label{it:decouplesphere2} If $p\leq 2$ and $f\in W^{s+\alpha(p)+\veps,p}(\Rn)$, then $f\in \mathcal{L}_{W,s} ^{2,p}(\Rn)$ and 
\[
\|f\|_{\mathcal{L}_{W,s} ^{2,p}(\Rn)}\leq C\|f\|_{W^{s+\alpha(p)+\veps,p}(\Rn)}.
\]
\end{enumerate}
\end{corollary}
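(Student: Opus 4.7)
The plan is to combine the discrete characterization in Proposition~\ref{prop:discrete} with the decoupling inequality in Theorem~\ref{thm:decouplesphere} and the duality result of Proposition~\ref{prop:duality}.

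For part~\eqref{it:decouplesphere1} with $p\geq 2$, I would first apply Proposition~\ref{prop:discrete}, which is applicable because the hypothesis $R\geq 2\ka$ ensures that $\supp(\wh{f}\,)\subseteq\{R-\ka\leq|\xi|\leq R+\ka\}$ is contained in the dyadic annulus $\{R/2\leq|\xi|\leq 2R\}$. This yields
\[
\|f\|_{\funpqs}\eqsim R^{s+\frac{n-1}{2}(\frac{1}{2}-\frac{1}{q})}\Big(\sum_{\nu\in V_R}\|\chi_\nu(D)f\|_{L^p}^q\Big)^{1/q}.
\]
Next, Theorem~\ref{thm:decouplesphere} (after a rescaling that identifies the thin annulus of thickness $2\ka$ around $RS^{n-1}$ with $S_{R/\ka,O(1)}$ in the sense of the theorem) gives
\[
\|f\|_{L^p}\leq C R^{d(p,q)+\veps}\Big(\sum_\nu\|\chi_\nu(D)f\|_{L^p}^q\Big)^{1/q}.
\]
Using the key algebraic identity $d(p,q)-\frac{n-1}{2}(\frac{1}{2}-\frac{1}{q})=\alpha(p)$ for $p\geq 2$ (immediate from \eqref{eq:dpq} and \eqref{eq:alphap}), together with the frequency-support equivalence $\|f\|_{W^{s-\alpha(p)-\veps,p}}\eqsim R^{s-\alpha(p)-\veps}\|f\|_{L^p}$, one obtains the desired inequality.

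For part~\eqref{it:decouplesphere2} with $p\leq 2$, the natural route is to dualize part~\eqref{it:decouplesphere1}. By Proposition~\ref{prop:duality},
\[
\|f\|_{\funpqs}=\sup_{\substack{g\in\fun^{p',q';-s}\\\|g\|_{\fun^{p',q';-s}}\leq 1}}|\lb f,g\rb|.
\]
I would insert a smooth frequency cutoff $\wt{\rho}$ that equals $1$ on $\supp(\wh{f}\,)$ and is supported in a slightly fatter annulus, writing $\lb f,g\rb=\lb f,\wt{\rho}(D)g\rb$; a suitable boundedness result for $\wt{\rho}(D)$ on $\fun^{p',q';-s}$ (analogous to the arguments used in Theorem~\ref{thm:FIObdd}) then lets me reduce to the case where $g$ also has Fourier support in the fattened annulus. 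Since $p'\geq 2$, I can then apply part~\eqref{it:decouplesphere1} to $g$ with parameters $(p',q',-s)$, combine with $\|g\|_{L^{p'}}\eqsim R^{s+\alpha(p')+\veps}\|g\|_{W^{-s-\alpha(p')-\veps,p'}}$ and the symmetry $\alpha(p)=\alpha(p')$, and conclude via $|\lb f,g\rb|\leq\|f\|_{L^p}\|g\|_{L^{p'}}$ and a supremum over $g$.

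The main obstacle will be the compatibility of $q$-ranges between the statement and the duality argument: invoking part~\eqref{it:decouplesphere1} for the dual exponent $q'$ requires $q'\geq 2$, i.e.\ $q\leq 2$. For $q>2$ (so $q'<2$), one must first extend the decoupling inequality of Theorem~\ref{thm:decouplesphere} to the range $q'\in(1,2]$ by exploiting the trivial inclusion $\|\cdot\|_{\ell^2}\leq\|\cdot\|_{\ell^{q'}}$, which replaces $d(p',q')$ by the larger $d(p',2)=\alpha(p')$ at the cost of absorbing the loss into the factor $R^\veps$. Getting the resulting $R$-exponents to match the Sobolev exponent claimed in part~\eqref{it:decouplesphere2} is the delicate point that requires the most care.
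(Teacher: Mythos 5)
Your argument for part~\eqref{it:decouplesphere1} is exactly the paper's: reduce to Schwartz $f$ (the paper does this via Proposition~\ref{prop:density} after slightly enlarging $\ka$), rescale $f_R(y)=R^{-n}f(y/R)$ so that $\supp(\wh{f_R})$ lies in a $\ka R^{-1}$-neighbourhood of $S^{n-1}$, apply Theorem~\ref{thm:decouplesphere}, undo the scaling using homogeneity of the $\chi_\nu$, and match exponents through Proposition~\ref{prop:discrete} and the identity $d(p,q)-\tfrac{n-1}{2}(\tfrac12-\tfrac1q)=\alpha(p)$. That part is correct and needs no further comment.

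For part~\eqref{it:decouplesphere2} you also follow the paper's route (dualize via Proposition~\ref{prop:duality}, restrict to $g$ with Fourier support in a fattened annulus, apply part~\eqref{it:decouplesphere1} to $g$ at exponents $(p',q',-s)$, and use $\alpha(p)=\alpha(p')$), and you are right to flag the $q$-range as the delicate point: the paper invokes part~\eqref{it:decouplesphere1} at the dual exponent $q'$ without comment, even though $q\ge 2$ forces $q'\le 2$, outside the range in which part~\eqref{it:decouplesphere1} is proved. However, your proposed repair does not work. Replacing $\|\cdot\|_{\ell^{q'}}$ by the smaller $\|\cdot\|_{\ell^{2}}$ and using $\ell^2$ decoupling produces the constant $R^{d(p',2)+\veps}$ in place of $R^{d(p',q')+\veps}$, and the discrepancy
\[
R^{\,d(p',2)-d(p',q')}=R^{\frac{n-1}{2}\left(\frac{1}{q'}-\frac12\right)}=R^{\frac{n-1}{2}\left(\frac12-\frac1q\right)}
\]
is a \emph{fixed} positive power of $R$ when $q>2$; it cannot be absorbed into $R^{\veps}$ for arbitrarily small $\veps$. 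This loss is not an artifact of the method: testing part~\eqref{it:decouplesphere2} on an $f$ whose Fourier support lies in a single cap $\theta_\nu$ gives, by Remark~\ref{rem:dyadicpar}, $\|f\|_{\funpqs}\eqsim R^{s+\frac{n-1}{2}(\frac12-\frac1q)}\|f\|_{L^p}$ versus $\|f\|_{W^{s+\alpha(p)+\veps,p}}\eqsim R^{s+\alpha(p)+\veps}\|f\|_{L^p}$, so the claimed inequality forces $\frac{n-1}{2}(\frac12-\frac1q)\le\alpha(p)+\veps$, which fails for $p$ near $2$ (where $\alpha(p)=0$), $q>2$ and $\veps$ small. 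In other words, the clean duality argument genuinely requires $q'\ge 2$, i.e.\ $q\le 2$, and under the corollary's hypothesis $q\in[2,\infty)$ part~\eqref{it:decouplesphere2} can only be established at $q=2$; this is a defect of the statement rather than of your strategy, but your suggestion that the loss hides in $R^\veps$ should be removed.
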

\begin{proof}
For \eqref{it:decouplesphere1}, we may assume without loss of generality that $s=\alpha(p)+\veps$. Moreover, by slightly enlarging $\ka$ and by applying Proposition \ref{prop:density}, we may suppose that $f\in\Sw(\Rn)$. 

Now set $f_{R}(y):=R^{-n}f(y/R)$ for $y\in\Rn$, and recall that the $\chi_{\nu}$ are positively homogeneous of degree zero. Then %re exists an $N\in\N$, independent of $R$, such that, for each $\theta\in\Theta_{1/R}$, one has $\ind_{\theta}(R\xi) \chi_{\nu}(\xi)=0$ for all $\xi\in\Rn$ and all but at most $N$ elements $\nu\in V_{R}$, and similarly with the roles of $\Theta_{1/R}$ and $V_{R}$ reversed. Hence Theorem \ref{thm:decouplesphere}, 
two changes of variables, Theorem \ref{thm:decouplesphere} and Proposition \ref{prop:discrete} combine to yield
\begin{align*}
\|f\|_{L^{p}(\Rn)}&=R^{\frac{n}{p'}}\|f_{R}\|_{L^{p}(\Rn)}\lesssim R^{\frac{n}{p'}+d(p,q)+\veps}\Big(\sum_{\nu \in  V_{R}}\|\chi_{\nu}(D)f_{R}\|_{L^{p}(\Rn)}^{q}\Big)^{1/q}\\
&=R^{d(p,q)+\veps}\Big(\sum_{\nu \in  V_{R}}\|\chi_{\nu}(D)f\|_{L^{p}(\Rn)}^{q}\Big)^{1/q}\eqsim \|f\|_{\mathcal{L}_{W,\alpha(p)+\veps} ^{q,p}(\Rn)}.
\end{align*}
This proves \eqref{it:decouplecone1}.

By duality, \eqref{it:decouplesphere2} follows from \eqref{it:decouplesphere1}. More precisely, Theorem \ref{thm:duality} and Proposition \ref{prop:density} yield $\|f\|_{\fun^{2,p}(\Rn)}\eqsim\sup|\lb f,g\rb_{\Rn}|$, where the supremum is taken over all $g\in\Sw(\Rn)$ with $\supp(\wh{g})\subseteq\{\xi\in\Rn\mid R-2\ka\leq |\xi|\leq R+2\ka\}$ and $\|g\|_{
\mathcal{L}_{W,-s} ^{2,p'}(\Rn)}\leq 1$. Hence \eqref{it:decouplesphere1} yields
\begin{align*}
\|f\|_{\fun^{2,p}(\Rn)}&\eqsim\sup|\lb f,g\rb_{\Rn}|\lesssim \sup \|f\|_{W^{s+\alpha(p)+\veps,p}(\Rn)}\|g\|_{W^{-s-\alpha(p')-\veps,p'}(\Rn)}\\
&\lesssim \sup \|f\|_{W^{s+\alpha(p)+\veps,p}(\Rn)}\|g\|_{\mathcal{L}_{W,-s} ^{2,p'}(\Rn)}= \|f\|_{W^{s+\alpha(p)+\veps,p}(\Rn)}.\qedhere
\end{align*}
\end{proof}

\begin{remark}\label{rem:decouplingcompare}
For $p\neq 2$, the estimates in \eqref{it:decouplesphere1} and \eqref{it:decouplesphere2} of Corollary \ref{cor:decouplesphere} improve upon \eqref{it:Sobolev21} and \eqref{it:Sobolev22} of Theorem \ref{thm:Sobolev2}, which hold without any assumptions on the frequency support of $f$%, for suitable $q$.
. Indeed, for all $2<p<\infty$ one has 
\[
s(p')-\alpha(p')=s(p)-\alpha(p)=
\begin{cases}
\frac{1}{p} &\quad \text{if }p \geq \frac{2(n+1)}{n-1},\\
s(p) &\quad \text{if }2<p \leq \frac{2(n+1)}{n-1}.
\end{cases}
\]
%Note that this is the expected gain of smoothness in the local smoothing conjecture. 
On the other hand, Corollary \ref{cor:decouplesphere} yields less norm control than in the parabolically localized setting of \eqref{eq:fparloc} and \eqref{eq:fparloc2}.

Also note that Corollary \ref{cor:decouplesphere} is equivalent to Theorem \ref{thm:decouplesphere}, in the sense that the same rescaling argument as above allows one to deduce Theorem \ref{thm:decouplesphere} from \eqref{eq:decouplesphere}.
\end{remark}

\begin{remark}\label{rem:decouplinggeneral}
The general decoupling inequality in \cite{Bourgain-Demeter15}, for compact $C^{2}$ hypersurfaces with positive definite second fundamental form, follows from Theorem \ref{thm:decouplesphere}, by decomposing the surface into a finite number of small pieces. Hence Corollary \ref{cor:decouplesphere} can also be used to reformulate this more general decoupling inequality.
\end{remark}

\subsection{Decoupling for the cone}\label{subsec:decouplecone}

For $\kappa\geq1$ and $R\geq2\ka$, let $W_{R}$ and $(\wt{\chi}_{\w})_{\w\in W_{R}}$ be collections with the same properties as $V_{R}$ and $(\chi_{\nu})_{\nu\in V_{R}}$, from Section \ref{subsec:discrete}, but with $n$ replaced by $n+1$. More precisely, $W_{R}\subseteq S^{n}$ is a maximal collection of unit vectors in $\R^{n+1}$ such that $|\w-\w'|\geq R^{-1/2}$ for all $\w,\w'\in W_{R}$, and $(\wt{\chi}_{\w})_{\w\in W_{R}}\subseteq C^{\infty}(\R^{n+1}\setminus\{0\})$ is an associated partition of unity of functions that are positively homogeneous of degree zero. Note that the Fourier multiplier $\wt{\chi}_{\w}(D)$ with symbol $\wt{\chi}_{\w}$ now acts on functions on $\Rnone$.

Let $\Gamma_{R,\ka}\subseteq \R^{n+1}$ be a $\ka R^{-1}$-neighborhood of the truncated light cone
\[
%\tilde{\Gamma}:=
\{\zeta=(\xi,\tau)\in \R^{n}\times\R\mid 1/2\leq |\xi|=\tau\leq 2\}.
\] 
Then we can formulate the $\ell^{q}$ decoupling inequality for the cone as follows, using the exponent $d(p,q)$ from \eqref{eq:dpq}.

\begin{theorem}\label{thm:decouplecone}
Let $p,q\in[2,\infty)$ and $\ka,\veps>0$. Then there exists a $C\geq0$ such that 
\[
\|g\|_{L^{p}(\R^{n+1})}\leq CR^{d(p,q)+\veps}\Big(\sum_{\w\in W_{R}}\|\wt{\chi}_{\w}(D)g\|_{L^{p}(\R^{n+1})}^{q}\Big)^{1/q} 
\]
for all $R\geq 2\ka$ and $g\in \Sw(\R^{n+1})$ with $\supp(\wh{g})\subseteq \Gamma_{R,\ka}$.
\end{theorem}
\begin{proof}
For $\w\in W_{R}$, set $\theta_{\w}:=\supp(\wt{\chi}_{\w})\cap \Gamma_{R,\ka}$. Then $\Gamma_{R,\ka}=\cup_{\w\in W_{R}}\theta_{\w}$, and the $\theta_{\w}$ have finite overlap. Fix $\w=(\w_{1},\ldots,\w_{n+1})\in W_{R}$ such that $\theta_{\w}\neq \emptyset$, and suppose that only the first and last coordinates of $\w$ are nonzero. We claim that $\theta_{\w}$ is contained in a slab of dimensions approximately $R^{-1}\times R^{-1/2}\times\dots\times R^{-1/2}\times 1$, pointing along the light cone. 
By rotation, the required statement is then equivalent to the standard formulation of the $\ell^{q}$ decoupling inequality for the cone, from \cite{Bourgain-Demeter15}. 

To prove the claim, let $\zeta=(\xi,\tau)\in \theta_{\w}$, with $\xi=(\xi_{1},\ldots,\xi_{n})\in \Rn$. Then, by assumption on $\w$ and because $\zeta\in \supp(\chi_{w})$, one has
\begin{equation}\label{eq:slabbound}
\Big|\frac{\xi_{1}}{|\zeta|}-\w_{1}\Big|^{2}+\Big|\frac{\xi_{2}}{|\zeta|}\Big|^{2}+\ldots+\Big|\frac{\xi_{n}}{|\zeta|}\Big|^{2}\leq |\hat{\zeta}-\w|^{2}\leq \frac{4}{R}.
\end{equation}
Moreover, $|\zeta|\eqsim 1$, since $\zeta\in \Gamma_{R,\ka}$. Hence 
\[
|\xi_{2}|^{2}+\ldots+|\xi_{n}|^{2}\leq  4R^{-1}|\zeta|^{2}\eqsim R^{-1}.
\]
It also follows from \eqref{eq:slabbound} that $\w$ points in the direction of the light cone. By combining all this, one sees that $\theta_{\w}$ is as claimed.
\end{proof}

We will reinterpret this decoupling inequality as a bound for the $\funpqsone$ norm of functions with frequency support near the light cone. More precisely, let
\[
\Gamma:=\{\zeta=(\xi, \tau)\in\R^{n}\times\R\mid |\tau-|\xi||\leq 1\}\subseteq \R^{n+1}
\]
be a thickened version of the full light cone. For $1<p<\infty$, \vanish{let 
\begin{equation}\label{eq:gammapq}
\gamma(p,q) := \begin{cases}
\frac{n-1}{2}(\frac{1}{2}-\frac{1}{p})-\frac{1}{p}+\frac{1}{2q}-\frac{1}{4} &\quad \text{if }p \geq \frac{2(n+1)}{n-1},\\
\frac{1}{2}-\frac{1}{q}&\quad \text{if }\frac{2(n+1)}{n+3}\leq p \leq 2,\\
\frac{1}{q}-\frac{1}{2}&\quad \text{if }2\leq p \leq \frac{2(n+1)}{n-1},\\\frac{n-1}{2}(\frac{1}{p}-\frac{1}{2})+\frac{1}{p}-\frac{1}{q}-\frac{1}{2} &\quad \text{if }p\leq \frac{2(n+1)}{n+3},
\end{cases}
\end{equation}
and note that $\gamma(p,q)=\gamma(p',q')=$}
recall the definition of $\alpha(p)$ from \eqref{eq:alphap}.

\begin{corollary}\label{cor:decouplecone}
Let $p\in(1,\infty)$, $q\in[2,\infty)$, $s\in\R$, $\ka\geq 1$ and $\veps>0$. Then there exists a $C\geq0$ such that the following holds for all $g\in \Sw'(\R^{n+1})$ satisfying
\begin{equation}\label{eq:decouplecone0}
\supp(\wh{g})\subseteq \Gamma\cap \{\zeta=(\xi,\tau)\in\R^{n+1}\mid R/2\leq |\xi|\leq 2R\}
\end{equation}
for some $R\geq 2\ka$.
\begin{enumerate}
\item\label{it:decouplecone1} If $p\geq 2$ and $g\in \funpqsone$, then $g\in W^{s-\alpha(p)+\frac{1}{4}-\frac{1}{2q}-\veps,p}(\R^{n+1})$ and
\begin{equation}\label{eq:decouplecone}
\|g\|_{W^{s-\alpha(p)+\frac{1}{4}-\frac{1}{2q}-\veps,p}(\R^{n+1})}\leq C\|g\|_{\funpqsone}.
\end{equation}
\item\label{it:decouplecone2} If $p\leq 2$ and $g\in W^{s+\alpha(p)+\veps,p}(\R^{n+1})$, then $g\in\fun^{2,p}(\R^{n+1})$ and
\[
\|g\|_{\mathcal{L}_{W,s} ^{2,p}(\Rnone)}\leq C\|g\|_{W^{s+\alpha(p)+\veps,p}(\R^{n+1})}.
\]
\end{enumerate}
\end{corollary}
Note that, as in Remark \ref{rem:fracintimproved} and Corollary \ref{cor:decouplesphere}, the case $q=2$ is special, in the sense that one may use duality to reverse the inequality in \eqref{it:decouplesphere1} and obtain \eqref{it:decouplesphere2}.
\begin{proof}
The proof is analogous to that of Corollary \ref{cor:decouplesphere}. The difference in the exponent in \eqref{eq:decouplecone} compared to \eqref{eq:decouplesphere} arises from applying Proposition \ref{prop:discrete} with $n$ replaced by $n+1$.
\end{proof}

\section{Regularity for wave equations}\label{sec:wave}

In this section we obtain new regularity results for the Euclidean wave equation. We first connect our function spaces to local smoothing estimates, and then we indicate how these estimates can be applied to nonlinear wave equations with initial data outside of $L^{2}$-based Sobolev spaces.

\subsection{Local smoothing}\label{subsec:localsmooth}

%\begin{proposition}\label{prop:equivwave}
%Let $p,q\in[1,\infty]$, $s\in\R$ and $0\neq g\in\Sw(\R)$. Then there exists a $C>0$ such that the following holds. Let $f\in\funpqs$ be such that $\supp(\wh{f}\,)\subseteq\{\xi\in\Rn\mid 2^{k-1}\leq |\xi|\leq 2^{k+1}\}$ for some $k\in\Z_{+}$. Then
%\begin{align*}
%\frac{1}{C}\|f\|_{\Hps}&\leq 2^{k(s+\frac{n-1}{2}(\frac{1}{2}-\frac{1}{p}))}\Big(\sum_{\nu\in\Theta_{k}}\int_{\R}\|\chi_{\nu}(D)g(t)e^{it\phi(D)}f\|_{L^{p}(\Rn)}^{p}\ud t\Big)^{1/p}\\
%&\leq C\|f\|_{\Hps}
%\end{align*}
%for $p<\infty$, while for $q=\infty$ one has
%\[
%\frac{1}{C}\|f\|_{\funspinf}\leq R^{s+\frac{n-1}{4}}\max_{\nu\in V_{R}}\sup_{t\in\R}\|\chi_{\nu}(D)g(t)e^{it\phi(D)}f\|_{L^{\infty}(\Rn)}\leq C\|f\|_{\funspinf}.
%\]
%\end{proposition}

The following result connects our function spaces to local smoothing for the wave equation.

\begin{theorem}\label{thm:localsmoothmain}
Let $p\in(2,\infty)$, $q\in[2,\infty)$, $s\in\R$ and $\veps>0$. Then there exists a $C\geq 0$ such that 
\begin{equation}\label{eq:localsmoothmain}
\Big(\int_{0}^{1}\|e^{it\sqrt{-\Delta}}f\|_{W^{s,p}(\Rn)}^{p}\ud t\Big)^{1/p}\leq C\|f\|_{\mathcal{L}_{W,s+\alpha(p)+\veps}^{q,p}}
\end{equation}
for all $f\in \mathcal{L}_{W,s+\alpha(p)+\veps}^{q,p}(\R^{n})$.
\end{theorem}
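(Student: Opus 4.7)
The plan is to reduce the local smoothing estimate to the $\ell^{q}$ cone decoupling inequality (Theorem \ref{thm:decouplecone}) via a spatial Littlewood--Paley decomposition. I would fix $\chi \in C^{\infty}_{c}(\R)$ with $\chi \equiv 1$ on $[0,1]$ and consider the spacetime function $g(t,x) = \chi(t)\, e^{it\sqrt{-\Delta}}f(x)$, whose spacetime Fourier transform $\widehat{\chi}(\tau-|\xi|)\widehat{f}(\xi)$ is concentrated in an $O(1)$-neighborhood of the light cone.

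The heart of the proof will be a dyadic estimate: for $f_{k} := \Psi(2^{-k}D)f$ and $u_{k} := e^{it\sqrt{-\Delta}}f_{k}$,
\[
\|u_{k}\|_{L^{p}([0,1]\times\Rn)} \lesssim \|f_{k}\|_{\fun^{p,q;\alpha(p)+\veps/2}(\Rn)}.
\]
To prove this, I would apply Theorem \ref{thm:decouplecone} to $g_{k} = \chi(t) u_{k}$ at scale $R=2^{k}$, which bounds $\|g_{k}\|_{L^{p}(\R^{n+1})}$ by a constant times $2^{k(d(p,q)+\veps/2)}\big(\sum_{\w \in W_{2^{k}}}\|\chi_{\w}(D_{t,x})g_{k}\|_{L^{p}(\R^{n+1})}^{q}\big)^{1/q}$. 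The crucial identification is that for cone directions $\w_{\nu} = (\nu,1)/\sqrt{2}$ with $\nu \in V_{2^{k}}$,
\[
\chi_{\w_{\nu}}(D_{t,x})g_{k} \approx \chi(t)\, e^{it\sqrt{-\Delta}}\chi_{\nu}(D)f_{k},
\]
since $\widehat{g_{k}}$ sits on the thickened cone, so directions $\w$ far from the cone contribute negligibly while those near the cone are in bijection (up to bounded overlap) with $V_{2^{k}}$. The Frey--Portal argument invoked in the proof of Theorem \ref{thm:FIObdd} shows $e^{it\sqrt{-\Delta}}\chi_{\nu}(D)$ is uniformly bounded on $L^{p}(\Rn)$, hence $\|\chi_{\w_{\nu}}(D)g_{k}\|_{L^{p}(\R^{n+1})} \lesssim \|\chi_{\nu}(D)f_{k}\|_{L^{p}(\Rn)}$. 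Combining with Proposition \ref{prop:discrete} in $\Rn$ and using $d(p,q) - \tfrac{n-1}{2}(\tfrac{1}{2}-\tfrac{1}{q}) = \alpha(p)$ yields the dyadic bound.

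Next, to assemble these bounds, I would invoke the Littlewood--Paley square function characterization of $W^{s,p}(\Rn)$ together with Minkowski's integral inequality in $L^{p/2}_{t,x}$, both valid since $p \geq 2$:
\[
\Big(\int_{0}^{1}\|e^{it\sqrt{-\Delta}}f\|_{W^{s,p}(\Rn)}^{p}\ud t\Big)^{1/p} \eqsim \Big\|\Big(\sum_{k}|2^{ks}u_{k}|^{2}\Big)^{1/2}\Big\|_{L^{p}_{t,x}} \leq \Big(\sum_{k}\|f_{k}\|_{\fun^{p,q;s+\alpha(p)+\veps/2}(\Rn)}^{2}\Big)^{1/2}.
\]
To close the sum I would use two facts: the uniform Littlewood--Paley-type bound $\|f_{k}\|_{\fun^{p,q;t}} \lesssim \|f\|_{\fun^{p,q;t}}$, following from $f_{k} = \Psi(2^{-k}D)f$ and the observation that $\Psi(2^{-k}D)$ commutes with $\lb D\rb^{t}\ph_{\w}(D)$ and is uniformly $L^{p}(\Rn)$-bounded; and the frequency-localization identity $\|f_{k}\|_{\fun^{p,q;t}} \eqsim 2^{-k\veps/2}\|f_{k}\|_{\fun^{p,q;t+\veps/2}}$. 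Together these give $\|f_{k}\|_{\fun^{p,q;s+\alpha(p)+\veps/2}} \lesssim 2^{-k\veps/2}\|f\|_{\fun^{p,q;s+\alpha(p)+\veps}}$, and summing against $\sum_{k} 2^{-k\veps}<\infty$ completes the argument; the low-frequency piece is handled trivially.

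The main technical obstacle will be making the identification $\chi_{\w_{\nu}}(D_{t,x})g_{k} \approx \chi(t)\, e^{it\sqrt{-\Delta}}\chi_{\nu}(D)f_{k}$ fully rigorous, which involves matching the angular scale $2^{-k/2}$ on $S^{n}$ with its spatial counterpart on $S^{n-1}$, handling the $O(1)$-thickening of the cone in the $\tau$-direction produced by $\chi(t)$, and controlling errors from directions $\w$ not exactly aligned with the cone.
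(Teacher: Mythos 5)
Your argument is correct and is essentially the proof the paper relies on: the paper's own proof of Theorem \ref{thm:localsmoothmain} simply cites \cite{Rozendaal22b} (for $p=q$) and \cite{Rozendaal-Schippa23} (for $q=2$) and observes that those arguments extend to general $q$ via the $\ell^{q}$ cone decoupling inequality, and the dyadic reduction, cap identification, uniform fixed-time $L^{p}$ bound for $e^{it\sqrt{-\Delta}}\chi_{\nu}(D)$, and reassembly via Proposition \ref{prop:discrete} that you describe are exactly that extension. The one point needing slightly more care than you flag is that $\chi_{\w_{\nu}}(D_{t,x})$ does not literally factor through $\chi_{\nu}(D)$, so it is cleaner to decompose $g_{k}=\sum_{\nu}\chi(t)e^{it\sqrt{-\Delta}}\chi_{\nu}(D)f_{k}$ directly and verify that each summand has spacetime Fourier support in boundedly many of the slabs $\theta_{\w}$, to which the decoupling inequality then applies.
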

\begin{proof}
For $p=q$, the statement is contained in \cite[Theorem 4.4]{Rozendaal22b}. For $q=2$, it is a consequence of \cite[Theorem 1.1]{Rozendaal-Schippa23} and the standard embedding $W^{s+\alpha(p)+\veps,p}(\Rn)\subseteq B^{s+\alpha(p)+\veps/2}_{p,p}(\Rn)$. For general $q\geq2$, it follows by combining the latter result with the embedding $\funpqs\subseteq
\mathcal{L}_{W,s} ^{2,p}(\Rn)$ from \eqref{eq:Sobolev11}. 
\end{proof}

\begin{remark}\label{rem:localsmoothcon}
The local smoothing conjecture for the Euclidean wave equation posits that for each $\veps>0$ one has
\begin{equation}\label{eq:localsmoothcon}
\Big(\int_{0}^{1}\|e^{it\sqrt{-\Delta}}f\|_{W^{s,p}(\Rn)}^{p}\ud t\Big)^{1/p}\lesssim \|f\|_{W^{s+\sigma(p)+\veps,p}(\Rn)},
\end{equation}
for an implicit constant independent of $f\in W^{s+\sigma(p)+\veps,p}(\Rn)$. Here $\sigma(p):=0$ for $2<p\leq 2n/(n-1)$, and $\sigma(p):=2s(p)-1/p$ for $p\geq 2n/(n-1)$. By the Sobolev embeddings for $\funpqs$ in Theorem \ref{thm:Sobolev2} \eqref{it:Sobolev21}, \eqref{eq:localsmoothmain} improves upon \eqref{eq:localsmoothcon} for $p\geq 2(n+1)/(n-1)$ and $2\leq q\leq p$. Moreover, by the sharpness of the Sobolev embedding in Theorem \ref{thm:Sobolev2} \eqref{it:Sobolev22}, \eqref{eq:localsmoothmain} in fact yields a strict improvement of \eqref{eq:localsmoothcon} for such $p$ and $q$. On the other hand, for $2<p<2(n+1)/(n-1)$ and $2\leq q\leq p$, \eqref{eq:localsmoothmain} neither follows from \eqref{eq:localsmoothcon}, nor vice versa. In particular, due to the sharpness of the Sobolev embedding in Theorem \ref{thm:Sobolev2} \eqref{it:Sobolev22}, \eqref{eq:localsmoothmain} yields sharper estimates than \eqref{eq:localsmoothcon} for certain initial data. Here it is relevant to note that the exponents $\alpha(p)$ in \eqref{eq:localsmoothmain} and $\sigma(p)$ in \eqref{eq:localsmoothcon} are sharp, for all $2<p<\infty$ and $2\leq q\leq p$. In the case of \eqref{eq:localsmoothmain}, this follows from \eqref{eq:Sobolev11} and \cite[Theorem 5.3]{Rozendaal22b}. 

We also note that, at least when restricted to dyadic frequency annuli, $\funpqs$ is the largest space of initial data for which one can obtain local smoothing when applying the $\ell^{q}$ decoupling inequality in the manner in which it is typically used (for more on this see \cite{Rozendaal22b,Rozendaal-Schippa23}).
\end{remark}

\begin{remark}\label{rem:Strichartz}
By Theorem \ref{thm:Sobolev1}, one has $W^{1/2,2}(\Rn)\subsetneq
\mathcal{L}_{W,0} ^{2,p}(\Rn)$ for $p=2(n+1)/(n-1)$. Given that $\alpha(p)=0$, Theorem \ref{thm:localsmoothmain} therefore almost yields a strict improvement of the classical Strichartz estimate
\begin{equation}\label{eq:clasStrichartz}
\Big(\int_{0}^{1}\|e^{it\sqrt{-\Delta}}f\|_{L^{p}(\Rn)}^{p}\ud t\Big)^{1/p}\leq C\|f\|_{W^{1/2,2}(\Rn)}.
\end{equation}
In fact, by the sharpness of the Sobolev embeddings in Theorem \ref{thm:Sobolev1}, Theorem \ref{thm:localsmoothmain} already complements \eqref{eq:clasStrichartz}. More precisely, for this specific $p$, Theorem \ref{thm:localsmoothmain} yields sharper estimates than \eqref{eq:clasStrichartz} for a large class of initial data, while also allowing for initial data in $L^{p}$-based Sobolev spaces.  
\end{remark}

\begin{remark}\label{rem:spherical}
As in \cite{GhLiRoSo24}, Theorem \ref{thm:localsmoothmain} can be used to obtain improved maximal function estimates and results about pointwise almost everywhere convergence for the Euclidean wave equation, as well as improved bounds for the local spherical maximal function.
\end{remark}

\subsection{Nonlinear wave equations}\label{subsec:nonlinear}

In this subsection we indicate how our results can be applied to nonlinear wave equations with initial data in $\funpqs$ for $p>2$. Such initial data might be referred to as ``slowly decaying", due to the fact that such a function may decay slower at infinity than an $L^{2}(\Rn)$ function does. On the other hand, even for compactly supported initial data, the results presented here show that, assuming additional integrability beyond that of an $L^{2}(\Rn)$ function, one can obtain well-posedness statements for rougher initial data than one obtains from Strichartz estimates (for more on this see \cite[Section 1.3]{LiRoSoYa24}). 

Our results and proofs for nonlinear wave equations are analogous to those in \cite{Rozendaal-Schippa23} (see also \cite{Schippa22,LiRoSoYa24}). In particular, here we only will consider the cubic nonlinear wave equation in dimension $n=2$, although a similar approach can be used in other dimensions and for different nonlinearities.% albeit involving different function spaces.

Consider the Cauchy problem for the cubic nonlinear wave equation on $\R^{2}\times\R$:
\begin{equation}\label{eq:nonlinear}
\begin{cases}(\partial_{t}^{2}-\Delta_{g})u(x,t)=\pm |u(x,t)|^{2}u(t,x),
\\u(x,0)=f(x), \ \partial_{t}u(x,0)=g(x).
\end{cases}
\end{equation}
Our main result concerning \eqref{eq:nonlinear} is as follows.

\begin{theorem}\label{thm:nonlinear}
Let $q\in[2,\infty]$ and $\veps,T>0$. Then \eqref{eq:nonlinear} is quantitatively well posed with initial data space
\begin{equation}\label{eq:initial}
X=(\mathcal{L}_{W,\veps} ^{q,6}(\R^{2})+W^{1/2,2}(\R^{2}))\times(\mathcal{L}_{W,\veps-1} ^{q,6}(\R^{2})\times W^{-1/2,2}(\R^{2}))
\end{equation}
and solution space
\begin{equation}\label{eq:solutionspace}
S_{T}=L^{4}\big([0,T];L^{6}(\R^{2})\big) \cap C\big([0,T];\mathcal{L}_{W,\veps} ^{q,6}(\R^{2}) + W^{1/2,2}(\R^{2})\big).
\end{equation}
Moreover, \eqref{eq:nonlinear} is also quantitatively well posed with initial data space
\begin{equation}\label{eq:initial2}
X=(\mathcal{L}_{W,\veps} ^{q,4}(\R^{2})+W^{3/8,2}(\R^{2}))\times(\mathcal{L}_{W,\veps-1} ^{q,4}(\R^{2})\times W^{-5/8,2}(\R^{2}))
\end{equation}
and solution space
\begin{equation}\label{eq:solutionspace2}
S_{t_{0}}=L^{24/7}\big([0,T];L^{4}(\R^{2})\big) \cap C\big([0,T];\mathcal{L}_{W,\veps} ^{q,4}(\R^{2}) + W^{3/8,2}(\R^{2})\big).
\end{equation}
\end{theorem}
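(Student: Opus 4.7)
The plan is to run the standard Picard contraction argument for \eqref{eq:nonlinear} based on Duhamel's formula, replacing the usual $\dot H^{1/2}$-based Strichartz input with a hybrid of classical Strichartz estimates and the new local smoothing estimate from Theorem \ref{thm:localsmoothmain}. Given $(f,g)\in X$, one decomposes $(f,g) = (f_{1}, g_{1}) + (f_{2}, g_{2})$ with $(f_{1}, g_{1})$ in the decoupling-space factor and $(f_{2}, g_{2})$ in the $L^{2}$-based Sobolev factor, so that correspondingly $u_{\text{lin}} = u_{1} + u_{2}$. One then seeks a fixed point of $u \mapsto u_{\text{lin}} + \Phi(u)$, with $\Phi(u)(t) = \pm\int_{0}^{t}\frac{\sin((t-s)\sqrt{-\Delta})}{\sqrt{-\Delta}}|u(s)|^{2}u(s)\,\ud s$, in a ball of $S_{T}$.

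\textbf{Linear estimates.} The first setting sits at the conformal scaling: in $n=2$ one has $2(n+1)/(n-1)=6$, so $\alpha(6)=0$, and Theorem \ref{thm:localsmoothmain} together with the embedding $L^{6}(0,T)\hookrightarrow L^{4}(0,T)$ yields
\[
\|u_{1}\|_{L^{4}(0,T;L^{6}(\R^{2}))} \lesssim \|(f_{1},g_{1})\|_{\fun^{6,q;\veps}(\R^{2})\times \fun^{6,q;\veps-1}(\R^{2})}.
\]
The classical Strichartz estimate at the admissible pair $(6,6)$ with scaling exponent $1/2$ yields the analogous $L^{4}_{t}L^{6}_{x}$ bound for $u_{2}$ by $\|(f_{2},g_{2})\|_{W^{1/2,2}\times W^{-1/2,2}}$. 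In the second setting the same scheme applies, now with $p=4$ in Theorem \ref{thm:localsmoothmain} (where $\alpha(4)=0$, since $6/5\leq 4\leq 6$) and the admissible Strichartz pair $(8,4)$ at scaling exponent $3/8$; in both cases the bounds embed into $L^{24/7}(0,T;L^{4}(\R^{2}))$ on the bounded time interval $[0,T]$. Time continuity into $\fun^{p,q;\veps}(\R^{2})+W^{s,2}(\R^{2})$ follows from Theorem \ref{thm:FIObdd} for the decoupling summand, together with the standard energy theory for the Sobolev summand.

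\textbf{Nonlinear contraction.} A standard combination of H\"older's inequality in time and Sobolev embedding in space---carried out explicitly for the corresponding $q=2$ statement in \cite{Rozendaal-Schippa23}---yields a sub-critical estimate of the form $\|\Phi(u)\|_{S_{T}} \leq CT^{\theta}\|u\|_{S_{T}}^{3}$ for some $\theta>0$. Crucially, this nonlinear estimate uses only the Strichartz norm $L^{4}_{t}L^{6}_{x}$ (respectively $L^{24/7}_{t}L^{4}_{x}$) of $u$ and not the hybrid initial-data norm, so it is entirely agnostic to the parameter $q$ and transfers verbatim to the present setting. A standard contraction/continuity argument then produces a fixed point, and quantitative well-posedness follows from the fact that all constants depend only on $T$, $\veps$ and $\|(f,g)\|_{X}$.

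\textbf{Main obstacle.} The essential structural feature, and the reason for the sum decomposition $X=(\fun^{p,q;\veps}+W^{s,2})\times(\fun^{p,q;\veps-1}+W^{s-1,2})$, is that the nonlinear Duhamel iterate cannot be closed inside $\fun^{p,q;\veps}$ alone: the cubic product $|u|^{2}u$ does not stay in this decoupling space in any stable way, so the invariance of Theorem \ref{thm:FIObdd} cannot be applied to the nonlinear term directly. The decoupling-space regularity of the data must therefore be carried entirely by the free evolution via Theorem \ref{thm:FIObdd}, while the nonlinear Duhamel iterate is absorbed into the $W^{s,2}$-component, on which both the cubic nonlinearity and the wave propagator behave well. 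Verifying this clean separation, and the stability of the hybrid norm under the Picard iteration, is the main structural step where the argument departs from the classical $L^{2}$-based theory; the remainder parallels the $q=2$ analysis in \cite{Rozendaal-Schippa23}.
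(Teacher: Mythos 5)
Your proposal is correct and follows essentially the same route as the paper: Duhamel plus a contraction/quantitative well-posedness scheme in the sense of Bejenaru--Tao, with the linear estimate obtained by combining Theorem \ref{thm:localsmoothmain} (using $\alpha(6)=\alpha(4)=0$) and Theorem \ref{thm:FIObdd} for the decoupling-space summand with classical Strichartz estimates for the $W^{s,2}$ summand, and with the trilinear Duhamel term closed purely through H\"older and inhomogeneous Strichartz estimates and absorbed into the $W^{s,2}$-component, exactly as in the paper's reduction to \cite{Rozendaal-Schippa23}. Your identification of the specific admissible pairs $(6,6)$ and $(8,4)$ and of the $q$-independence of the nonlinear estimate matches the details the paper delegates to \cite{Rozendaal-Schippa23}.
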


Our notion of quantitative well-posedness is taken from \cite{Bejenaru-Tao06}, and the definition is recalled in the proof below. Via a fixed-point argument, it implies that there exists a $\delta=\delta(t_{0})>0$ such that, if $\|(u_{0},u_{1})\|_{X}<\delta$, then \eqref{eq:nonlinear} has a unique solution $u\in S_{t_{0}}$, and this solution depends analytically on the initial data. Moreover, in Theorem \ref{thm:nonlinear}, for all $(u_{0},u_{1})\in X$ there exists a $t_{0}>0$ such that there is a unique solution $u\in S_{t_{0}}$ to \eqref{eq:nonlinear}.

\begin{proof}
The proof is almost completely analogous to that of \cite[Theorem 1.4]{Rozendaal-Schippa23}, although we rely on Theorems \ref{thm:localsmoothmain} and \ref{thm:FIObdd} instead of results about adapted Besov spaces. We briefly sketch the idea (see also \cite[Section 6.2]{LiRoSoYa24}).

Write \eqref{eq:nonlinear} as
\[
u = L(f,g) + N(u,u,u),
\]
with 
\[
\begin{split}
L(f,g)(t)&:=\cos(t\sqrt{-\Delta_{g}})f+\frac{\sin(t\sqrt{-\Delta_{g}})}{\sqrt{-\Delta_{g}}}g, \\
N(u_{1},u_{2},u_{3})(t)&:=\pm \int_{0}^{t}\frac{\sin((t-s)\sqrt{-\Delta_{g}})}{\sqrt{-\Delta_{g}}}u_{1}(s)\overline{u_{2}(s)}u_{3}(s)\ud s,
\end{split}
\]
for $u_{1},u_{2},u_{3}\in S_{T}$ and $t\in[0,T]$. We then say that \eqref{eq:nonlinear} is quantitatively well posed if
\begin{align}
\label{eq:linearabstract}
\| L(f,g) \|_{S_{T}} &\leq C \| (f,g) \|_{X }, \\
\label{eq:nonlinearabstract}
\| N(u_1,u_2,u_3) \|_{S_{T}} &\leq C \prod_{j=1}^3 \| u_j \|_{S_{T}},
\end{align}
for some $C\geq 0$ independent of $f,g\in X$ and $u_{1},u_{2},u_{3}\in S_{T}$. 

Now, to prove \eqref{eq:linearabstract} one can rely on Theorems \ref{thm:localsmoothmain} and \ref{thm:FIObdd} for initial data in $\mathcal{L}_{W,s} ^{q,p}(\R^{2})$, and homogeneous Strichartz estimates (cf.~\cite{Keel-Tao98}) for initial data in $W^{s,2}(\R^{2})$. On the other hand, the proof of \eqref{eq:nonlinearabstract} relies on H\"{o}lder's inequality and inhomogeneous Strichartz estimates. We refer to \cite{Rozendaal-Schippa23} for details. 
\end{proof}

One can equally well extend other results from \cite{Rozendaal-Schippa23} to initial data in $\funpqs$ spaces. In particular, if $\veps>1/2$ in Theorem \ref{thm:nonlinear}, then one obtains global existence for the defocusing equation with the initial data space in \eqref{eq:initial} and the solution space in \eqref{eq:solutionspace}.

\begin{remark}\label{rem:Besovnonlinear}
The main difference between Theorem \ref{thm:nonlinear} and the results in \cite{Rozendaal-Schippa23} concerns the second inclusion in \eqref{eq:solutionspace} and \eqref{eq:solutionspace2}. Although standard embeddings between Besov and Sobolev spaces, combined with the $\veps$ loss in \eqref{eq:initial} and \eqref{eq:initial2}, imply that the spaces of initial data in \eqref{eq:initial} and \eqref{eq:initial2} are not fundamentally larger than those in \cite{Rozendaal-Schippa23}, or vice versa, $\funpqs$ satisfies sharp embeddings into the $L^{p}$-based Sobolev scale, cf.~Theorem \ref{thm:Sobolev2}. By contrast, the adapted Besov spaces from \cite{Rozendaal-Schippa23} embed in a sharp manner into the standard Besov scale. Hence Theorems \ref{thm:nonlinear} and \ref{thm:Sobolev2} show that $L^{p}$ regularity of the initial data is pointwise preserved in an optimal sense, whereas \cite[Theorem 1.4]{Rozendaal-Schippa23} yields the corresponding statement for Besov regularity.
\end{remark}

\section*{Acknowledgments}

The authors would like to thank the referee for carefully reading the manuscript, and for several useful remarks and suggestions.
Rozendaal would like to thank Yiyu Tang for pointing out various typos and minor mistakes. Yung would like to thank Andreas Seeger for a helpful discussion that inspired the proof of Theorem \ref{thm:Sobolev2}. Yung is partially supported by a Future Fellowship FT200100399 from the Australian Research Council.

\bibliographystyle{plain}
\bibliography{Bibliography}

\begin{thebibliography}{10}

\bibitem{Bejenaru-Tao06}
Ioan Bejenaru and Terence Tao.
\newblock Sharp well-posedness and ill-posedness results for a quadratic
  non-linear {S}chr\"{o}dinger equation.
\newblock {\em J. Funct. Anal.}, 233(1):228--259, 2006.

\bibitem{BeHiSo21}
David Beltran, Jonathan Hickman, and Christopher~D. Sogge.
\newblock Sharp local smoothing estimates for {F}ourier integral operators.
\newblock In {\em Geometric aspects of harmonic analysis}, volume~45 of {\em
  Springer INdAM Ser.}, pages 29--105. Springer, Cham, [2021] \copyright 2021.

\bibitem{Bergh-Lofstrom76}
J\"{o}ran Bergh and J\"{o}rgen L{\"o}fstr{\"o}m.
\newblock {\em Interpolation spaces. {A}n introduction}.
\newblock Springer-Verlag, Berlin, 1976.
\newblock Grundlehren der Mathematischen Wissenschaften, No. 223.

\bibitem{Bourgain-Demeter15}
Jean Bourgain and Ciprian Demeter.
\newblock The proof of the {$l^2$} decoupling conjecture.
\newblock {\em Ann. of Math. (2)}, 182(1):351--389, 2015.

\bibitem{BoDeGu16}
Jean Bourgain, Ciprian Demeter, and Larry Guth.
\newblock Proof of the main conjecture in {V}inogradov's mean value theorem for
  degrees higher than three.
\newblock {\em Ann. of Math. (2)}, 184(2):633--682, 2016.

\bibitem{Calderon-Torchinsky75}
Alberto~P. Calder\'{o}n and Alberto Torchinsky.
\newblock Parabolic maximal functions associated with a distribution.
\newblock {\em Advances in Math.}, 16:1--64, 1975.

\bibitem{Calderon-Torchinsky77}
Alberto~P. Calder\'{o}n and Alberto Torchinsky.
\newblock Parabolic maximal functions associated with a distribution. {II}.
\newblock {\em Advances in Math.}, 24(2):101--171, 1977.

\bibitem{CoNiRo10}
Elena Cordero, Fabio Nicola, and Luigi Rodino.
\newblock Time-frequency analysis of {F}ourier integral operators.
\newblock {\em Commun. Pure Appl. Anal.}, 9(1):1--21, 2010.

\bibitem{DosSantosFerreira-Staubach14}
David Dos Santos~Ferreira and Wolfgang Staubach.
\newblock Global and local regularity of {F}ourier integral operators on
  weighted and unweighted spaces.
\newblock {\em Mem. Amer. Math. Soc.}, 229(1074):xiv+65, 2014.

\bibitem{DuGuLi17}
Xiumin Du, Larry Guth, and Xiaochun Li.
\newblock A sharp {S}chr\"{o}dinger maximal estimate in {$\Bbb R^2$}.
\newblock {\em Ann. of Math. (2)}, 186(2):607--640, 2017.

\bibitem{Du-Zhang19}
Xiumin Du and Ruixiang Zhang.
\newblock Sharp {$L^2$} estimates of the {S}chr\"{o}dinger maximal function in
  higher dimensions.
\newblock {\em Ann. of Math. (2)}, 189(3):837--861, 2019.

\bibitem{Duistermaat11}
Johannes~J. Duistermaat.
\newblock {\em Fourier integral operators}.
\newblock Modern Birkh\"{a}user Classics. Birkh\"{a}user/Springer, New York,
  2011.
\newblock Reprint of the 1996 edition, based on the original lecture notes
  published in 1973.

\bibitem{FaLiRoSo23}
Zhijie Fan, Naijia Liu, Jan Rozendaal, and Liang Song.
\newblock Characterizations of the {H}ardy space {$\mathcal H_{\rm FIO}^1(\Bbb
  R^n)$} for {F}ourier integral operators.
\newblock {\em Studia Math.}, 270(2):175--207, 2023.

\bibitem{Fefferman73b}
Charles Fefferman.
\newblock A note on spherical summation multipliers.
\newblock {\em Israel J. Math.}, 15:44--52, 1973.

\bibitem{Folland-Stein82}
Gerald~B. Folland and Elias~M. Stein.
\newblock {\em Hardy spaces on homogeneous groups}, volume~28 of {\em
  Mathematical Notes}.
\newblock Princeton University Press, Princeton, NJ; University of Tokyo Press,
  Tokyo, 1982.

\bibitem{Frey-Portal20}
Dorothee Frey and Pierre Portal.
\newblock ${L}^{p}$ estimates for wave equations with specific ${C}^{0,1}$
  coefficients.
\newblock Preprint available at \url{https://arxiv.org/abs/2010.08326}, 2020.

\bibitem{GhLiRoSo24}
Abhishek Ghosh, Naijia Liu, Jan Rozendaal, and Liang Song.
\newblock Spherical maximal functions and {H}ardy spaces for {F}ourier integral
  operators.
\newblock Preprint available at \url{https://arxiv.org/abs/2401.16955}, 2024.

\bibitem{GuLiKuYuZo21}
Shaoming Guo, Zane~Kun Li, Po-Lam Yung, and Pavel Zorin-Kranich.
\newblock A short proof of {$\ell^2$} decoupling for the moment curve.
\newblock {\em Amer. J. Math.}, 143(6):1983--1998, 2021.

\bibitem{GuWaZh20}
Larry Guth, Hong Wang, and Ruixiang Zhang.
\newblock A sharp square function estimate for the cone in {$\Bbb {R}^3$}.
\newblock {\em Ann. of Math. (2)}, 192(2):551--581, 2020.

\bibitem{HaPoRo20}
Andrew Hassell, Pierre Portal, and Jan Rozendaal.
\newblock Off-singularity bounds and {H}ardy spaces for {F}ourier integral
  operators.
\newblock {\em Trans. Amer. Math. Soc.}, 373(8):5773--5832, 2020.

\bibitem{Hassell-Rozendaal23}
Andrew Hassell and Jan Rozendaal.
\newblock {$L^p$} and {$\mathcal H_{FIO}^p$} regularity for wave equations with
  rough coefficients.
\newblock {\em Pure Appl. Anal.}, 5(3):541--599, 2023.

\bibitem{Hickman23}
Jonathan Hickman.
\newblock Pointwise convergence for the {S}chr\"odinger equation [{\it after}
  {X}iumin {D}u and {R}uixiang {Z}hang].
\newblock {\em Ast\'erisque}, (446):Exp. No. 1205, 285--363, 2023.

\bibitem{Hormander71b}
Lars H{\"o}rmander.
\newblock Fourier integral operators. {I}.
\newblock {\em Acta Math.}, 127(1-2):79--183, 1971.

\bibitem{Hormander03}
Lars H\"{o}rmander.
\newblock {\em The analysis of linear partial differential operators. {I}}.
\newblock Classics in Mathematics. Springer-Verlag, Berlin, 2003.
\newblock Distribution theory and Fourier analysis, Reprint of the second
  (1990) edition [Springer, Berlin; MR1065993 (91m:35001a)].

\bibitem{Hormander09}
Lars H\"{o}rmander.
\newblock {\em The analysis of linear partial differential operators. {IV}}.
\newblock Classics in Mathematics. Springer-Verlag, Berlin, 2009.
\newblock Fourier integral operators, Reprint of the 1994 edition.

\bibitem{HyNeVeWe16}
Tuomas Hyt\"{o}nen, Jan van Neerven, Mark Veraar, and Lutz Weis.
\newblock {\em Analysis in {B}anach spaces. {V}ol. {I}. {M}artingales and
  {L}ittlewood-{P}aley theory}, volume~63 of {\em Ergebnisse der Mathematik und
  ihrer Grenzgebiete. 3. Folge. A Series of Modern Surveys in Mathematics
  [Results in Mathematics and Related Areas. 3rd Series. A Series of Modern
  Surveys in Mathematics]}.
\newblock Springer, Cham, 2016.

\bibitem{Keel-Tao98}
Markus Keel and Terence Tao.
\newblock Endpoint {S}trichartz estimates.
\newblock {\em Amer. J. Math.}, 120(5):955--980, 1998.

\bibitem{Krantz-Parks13}
Steven~G. Krantz and Harold~R. Parks.
\newblock {\em The implicit function theorem}.
\newblock Modern Birkh\"auser Classics. Birkh\"auser/Springer, New York, 2013.
\newblock History, theory, and applications, Reprint of the 2003 edition.

\bibitem{LiRoSoYa24}
Naijia Liu, Jan Rozendaal, Liang Song, and Lixin Yan.
\newblock Local smoothing and {H}ardy spaces for {F}ourier integral operators
  on manifolds.
\newblock {\em J. Funct. Anal.}, 286(2):Paper No. 110221, 72 pp., 2024.

\bibitem{Rozendaal21}
Jan Rozendaal.
\newblock Characterizations of {H}ardy spaces for {F}ourier integral operators.
\newblock {\em Rev. Mat. Iberoam.}, 37(5):1717--1745, 2021.

\bibitem{Rozendaal22b}
Jan Rozendaal.
\newblock Local smoothing and {H}ardy spaces for {F}ourier integral operators.
\newblock {\em J. Funct. Anal.}, 283(12):Paper No. 109721, 22 pp., 2022.

\bibitem{Rozendaal-Schippa23}
Jan Rozendaal and Robert Schippa.
\newblock Nonlinear wave equations with slowly decaying initial data.
\newblock {\em J. Differential Equations}, 350:152--188, 2023.

\bibitem{RubiodeFrancia86}
Jos\'{e}~L. Rubio~de Francia.
\newblock Martingale and integral transforms of {B}anach space valued
  functions.
\newblock In {\em Probability and Banach spaces (Zaragoza, 1985)}, volume 1221
  of {\em Lecture Notes in Math.}, pages 195--222. Springer, Berlin, 1986.

\bibitem{Sato18}
Shuichi Sato.
\newblock Characterization of parabolic {H}ardy spaces by {L}ittlewood-{P}aley
  functions.
\newblock {\em Results Math.}, 73(3):Paper No. 106, 20, 2018.

\bibitem{Schippa22}
Robert Schippa.
\newblock On smoothing estimates in modulation spaces and the nonlinear
  {S}chr\"{o}dinger equation with slowly decaying initial data.
\newblock {\em J. Funct. Anal.}, 282(5):Paper No. 109352, 46, 2022.

\bibitem{SeSoSt91}
Andreas Seeger, Christopher~D. Sogge, and Elias~M. Stein.
\newblock Regularity properties of {F}ourier integral operators.
\newblock {\em Ann. of Math. (2)}, 134(2):231--251, 1991.

\bibitem{Smith98a}
Hart~F. Smith.
\newblock A {H}ardy space for {F}ourier integral operators.
\newblock {\em J. Geom. Anal.}, 8(4):629--653, 1998.

\bibitem{Sogge17}
Christopher~D. Sogge.
\newblock {\em Fourier integrals in classical analysis}, volume 210 of {\em
  Cambridge Tracts in Mathematics}.
\newblock Cambridge University Press, Cambridge, second edition, 2017.

\bibitem{Stein93}
Elias~M. Stein.
\newblock {\em Harmonic analysis: real-variable methods, orthogonality, and
  oscillatory integrals}, volume~43 of {\em Princeton Mathematical Series}.
\newblock Princeton University Press, Princeton, NJ, 1993.
\newblock With the assistance of Timothy S. Murphy, Monographs in Harmonic
  Analysis, III.

\bibitem{Tozoni04}
Sergio~Antonio Tozoni.
\newblock Weighted norm inequalities for vector-valued singular integrals on
  homogeneous spaces.
\newblock {\em Studia Math.}, 161(1):71--97, 2004.

\bibitem{Triebel78}
Hans Triebel.
\newblock {\em Interpolation {T}heory, {F}unction {S}paces, {D}ifferential
  {O}perators}, volume~18 of {\em North-Holland Mathematical Library}.
\newblock North-Holland Publishing Co., Amsterdam-New York, 1978.

\bibitem{Triebel10}
Hans Triebel.
\newblock {\em Theory of {F}unction spaces}.
\newblock Modern Birkh\"auser Classics. Birkh\"auser/Springer Basel AG, Basel,
  2010.
\newblock Reprint of 1983 edition.

\bibitem{Wolff00}
Thomas Wolff.
\newblock Local smoothing type estimates on {$L^p$} for large {$p$}.
\newblock {\em Geom. Funct. Anal.}, 10(5):1237--1288, 2000.

\end{thebibliography}

\end{document}